\theoremstyle{plain}
\newtheorem{The}{Theorem}[section]
\newtheorem*{The*}{Theorem}
\newtheorem{Lem}{Lemma}[section]
\newtheorem{Cor}{Corollary}[section]
\newtheorem*{Cor*}{Corollary}
\theoremstyle{definition}
\newtheorem{Rem}{Remark}[section]
\newtheorem{Exa}{Example}[section]
\newtheorem*{Rem*}{Remark}
\numberwithin{equation}{section}
\DeclareMathOperator{\End}{End}
\DeclareMathOperator{\SL}{SL}
\DeclareMathOperator{\SU}{SU}
\DeclareMathOperator{\Id}{Id}
\DeclareMathOperator{\res}{res}
\renewcommand{\Im}{\operatorname{Im}}
\renewcommand{\Re}{\operatorname{Re}}
\newcommand{\dvector}[1]{{\left(\begin{matrix}#1\end{matrix}\right)}}
\DeclareMathOperator{\dbar}{\bar\partial}
\DeclareMathOperator{\del}{\partial}
\newcommand{\R}{\mathbb{R}}
\newcommand{\Q}{\mathbb{Q}}
\newcommand{\C}{\mathbb{C}}
\newcommand{\Z}{\mathbb{Z}}
\newcommand{\CP}{\mathbb{CP}}
\newcommand{\bbS}{\mathbb{S}}
\begin{document}

\title[Navigating the space of symmetric CMC surfaces]{Navigating the space of symmetric CMC surfaces}
\author{Lynn Heller}
\author{Sebastian Heller}
\author{Nicholas Schmitt}
\address{Lynn Heller \\
  Institut f\"ur Differentialgeometrie\\
 Leibniz Universit{\"a}t Hannover\\ 
Welfengarten 1\\
30167 Hannover\\ Germany\\
 }
 \email{lynn.heller@math.uni-hannover.de}

\address{Sebastian Heller\\Department of Mathematics \\
University of Hamburg \\
Bundesstr. 55\\
20146 Hamburg\\ Germany\\
 }
 \email{seb.heller@gmail.com}
 
\address{Nicholas Schmitt \\
  Institut f\"ur Differentialgeometrie\\
 Leibniz Universit{\"a}t Hannover\\ 
Welfengarten 1\\
30167 Hannover\\ Germany\\
 }
\email{nschmitt@mathematik.uni-tuebingen.de}


\begin{abstract} 

In this paper we introduce a flow on the spectral data for symmetric CMC surfaces in the $3$-sphere. The flow is designed in such a way that it
changes the topology but fixes the intrinsic (metric) and certain extrinsic (periods) closing conditions of the CMC surfaces. By construction the flow yield closed (possibly branched) CMC surfaces at rational times and immersed higher genus CMC surfaces at integer times. We prove the short time existence of this flow near the spectral data of (certain classes of) CMC tori and  obtain thereby the existence of new families of closed (possibly branched) connected CMC surfaces of higher genus. Moreover, we prove that flowing the spectral data for the Clifford torus is equivalent to the flow of Plateau solutions by varying
the angle of the fundamental piece in Lawson's construction for the minimal surfaces $\xi_{g,1}.$ 

\end{abstract}

\maketitle

\setcounter{tocdepth}{2}
\tableofcontents

\typeout{== intro ============================================}

\section*{Introduction}
\label{sec:intro}
 \begin{figure}
\centering
\includegraphics[width=0.175\textwidth]{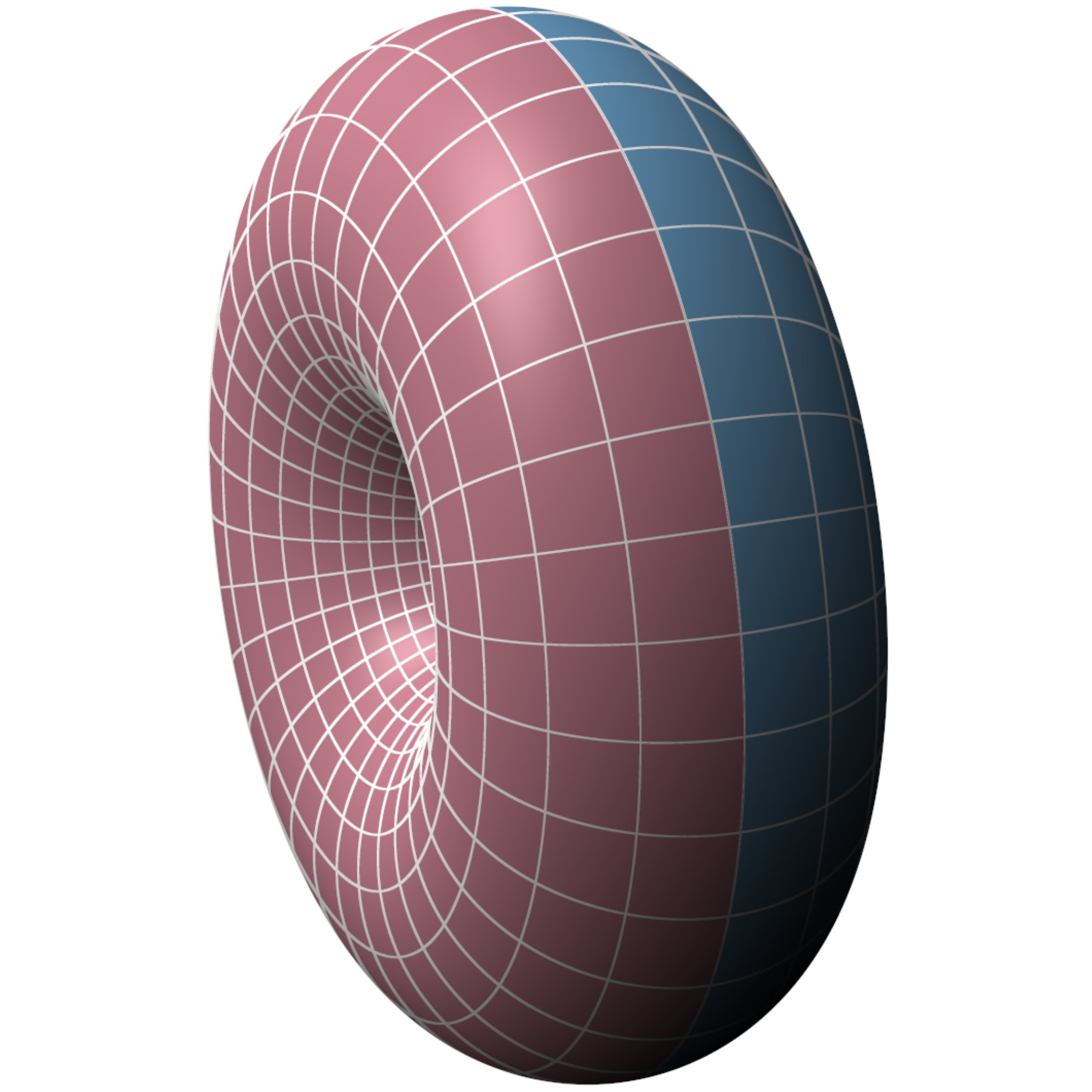}
\includegraphics[width=0.175\textwidth]{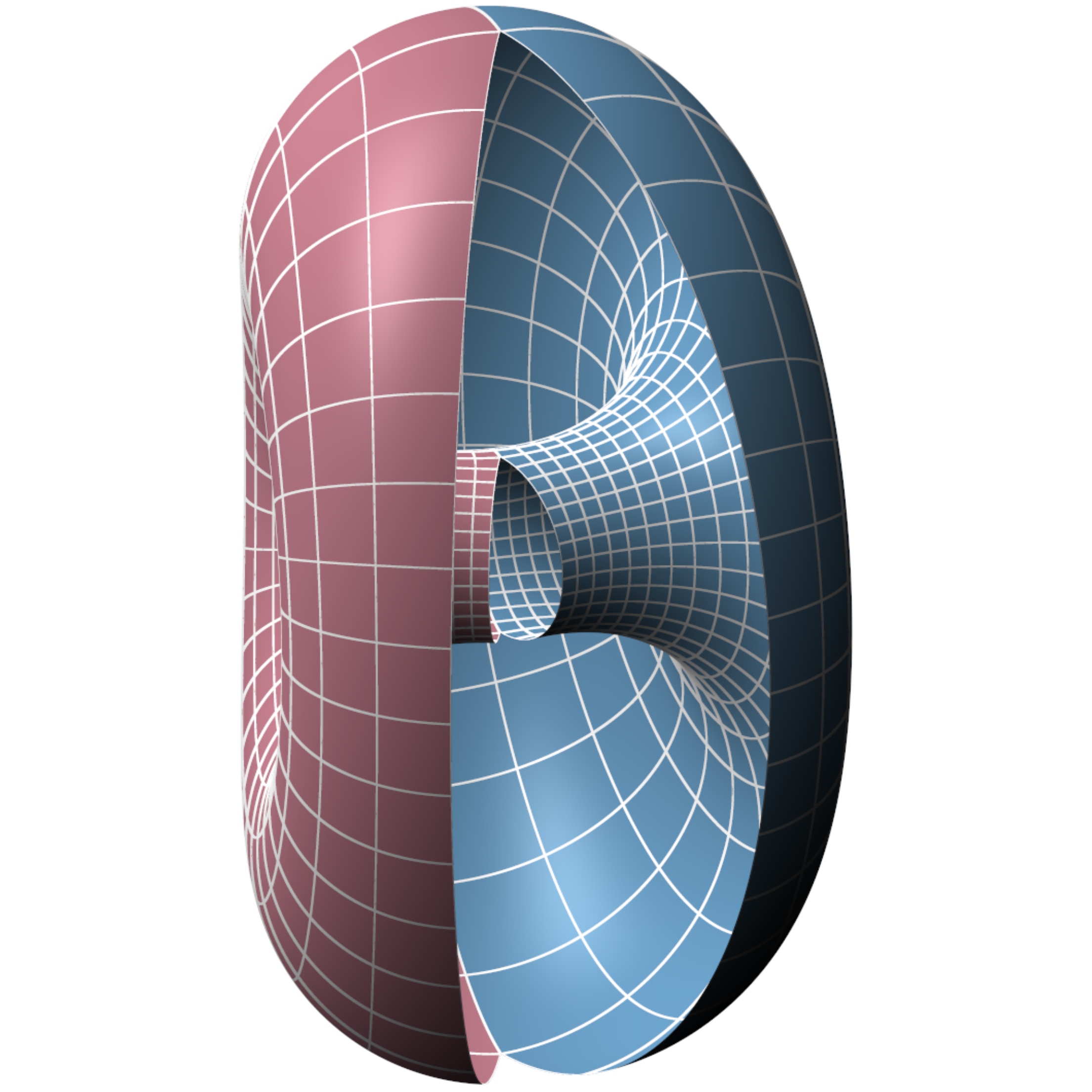}
\includegraphics[width=0.175\textwidth]{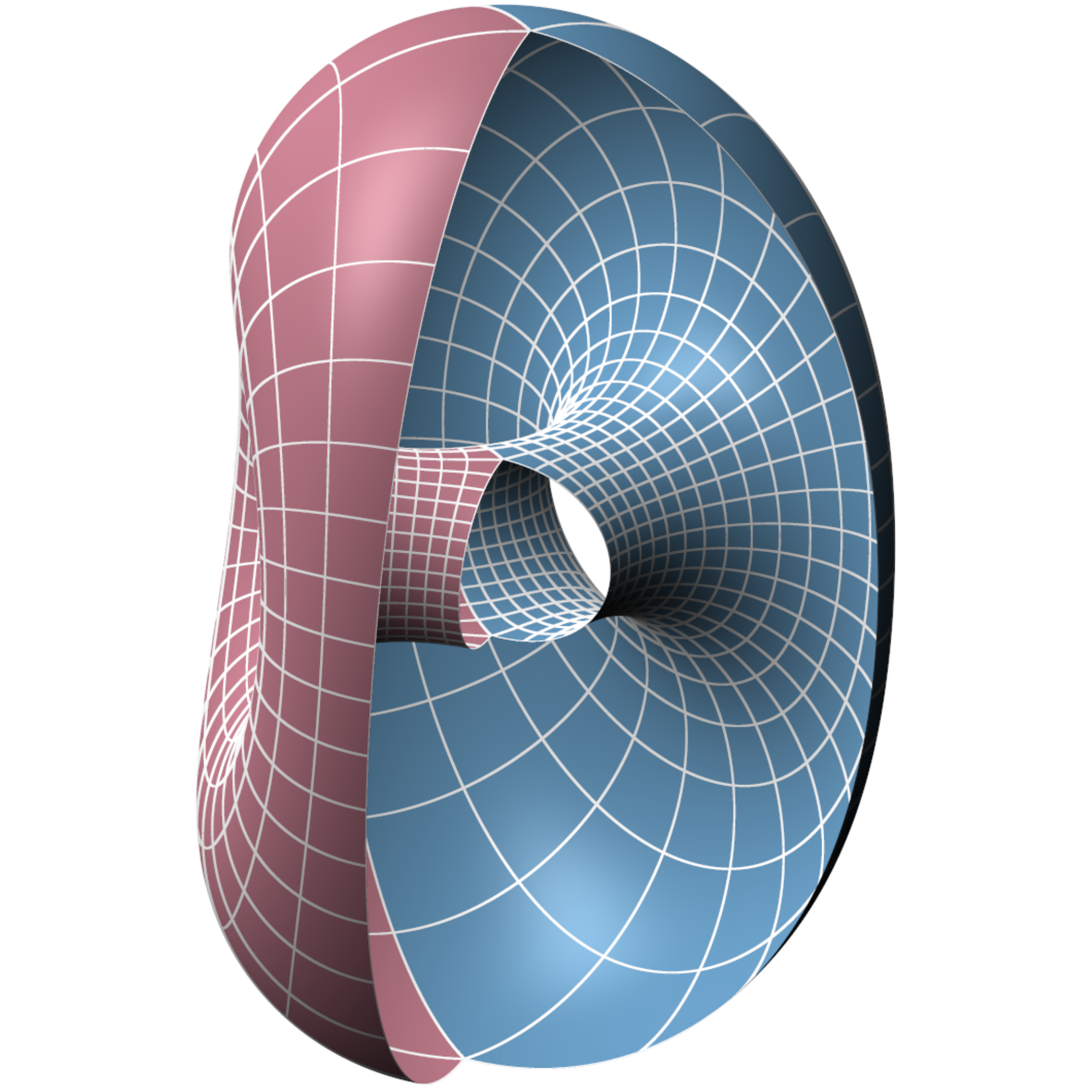}
\includegraphics[width=0.175\textwidth]{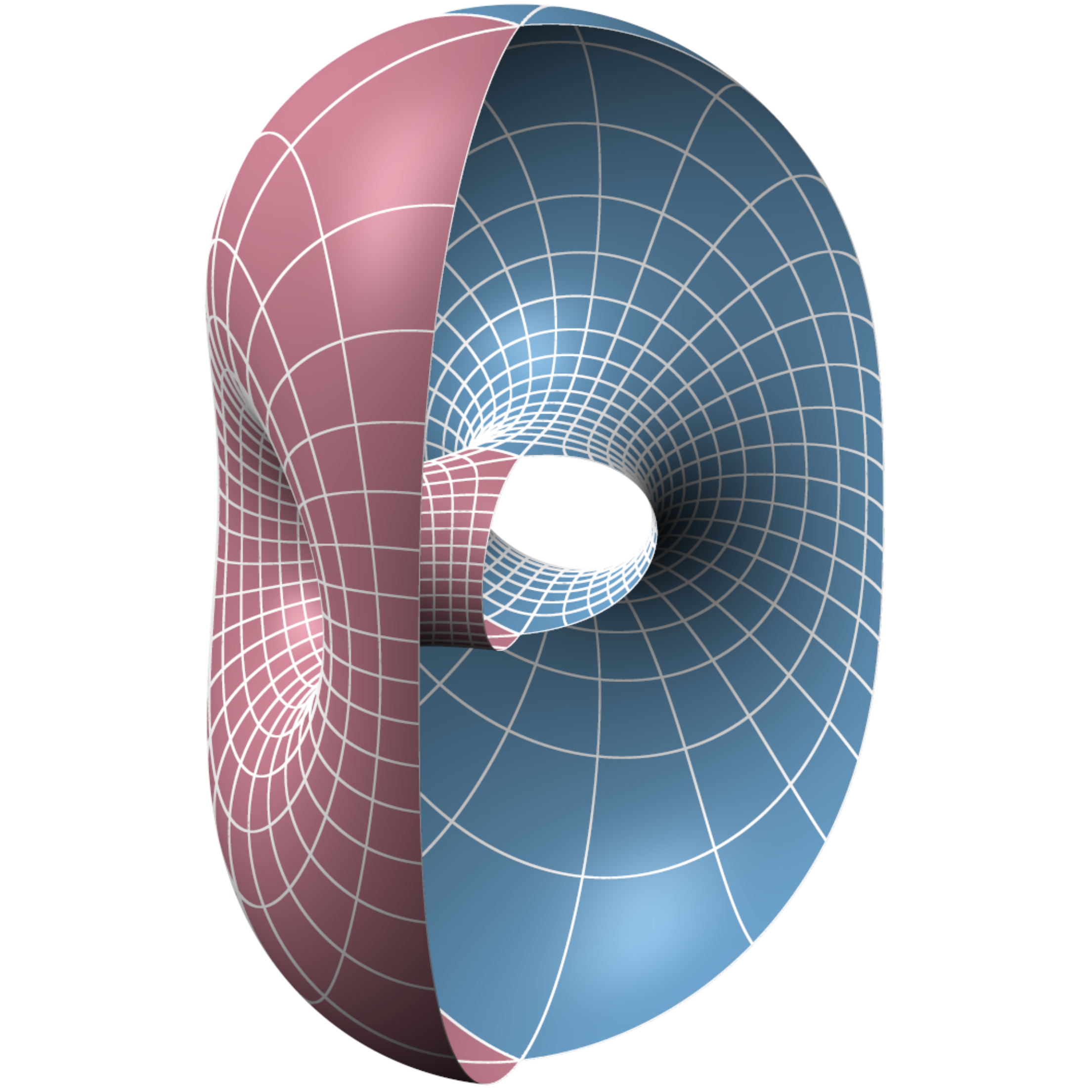}
\includegraphics[width=0.175\textwidth]{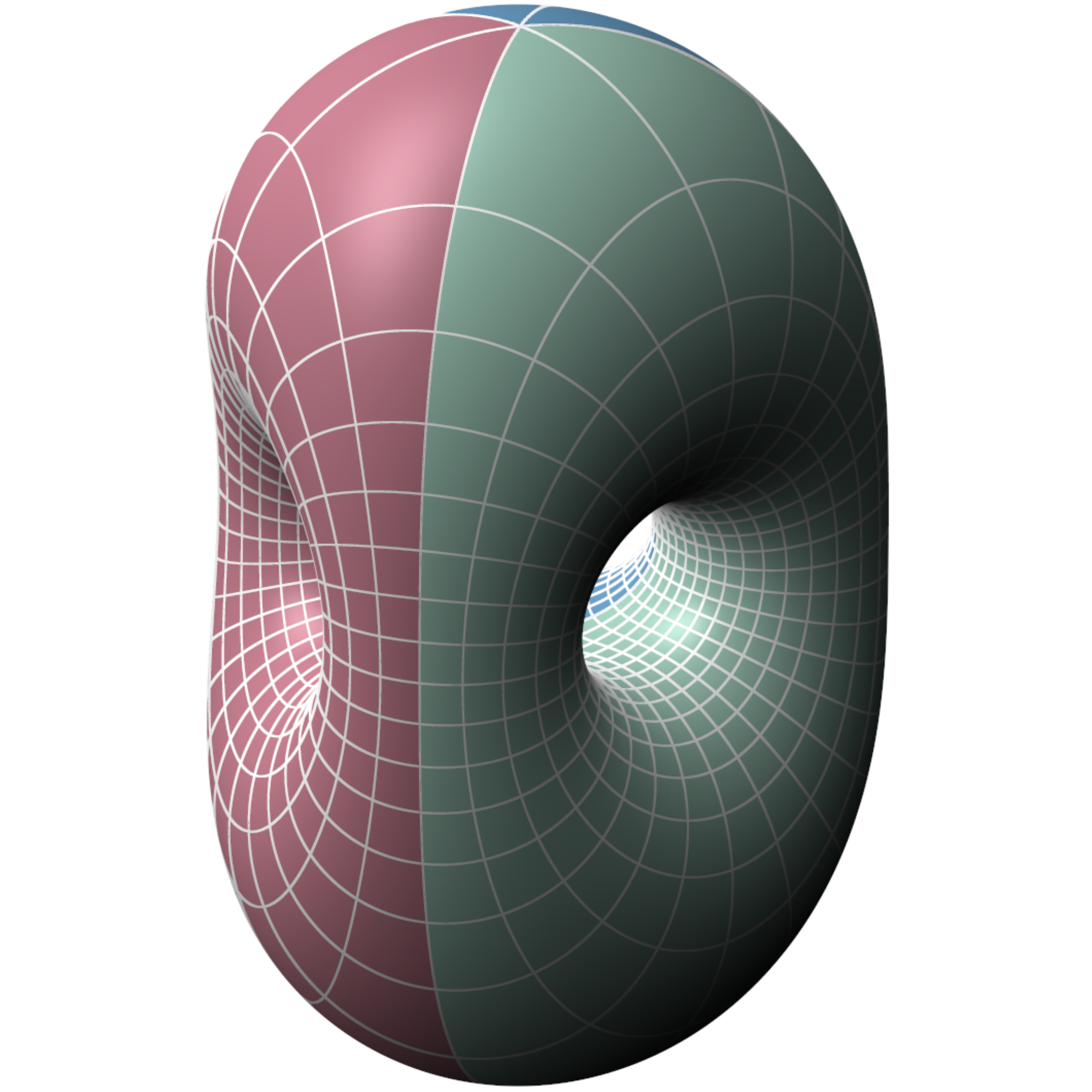}
\vspace{0.25cm}

\includegraphics[width=0.1\textwidth]{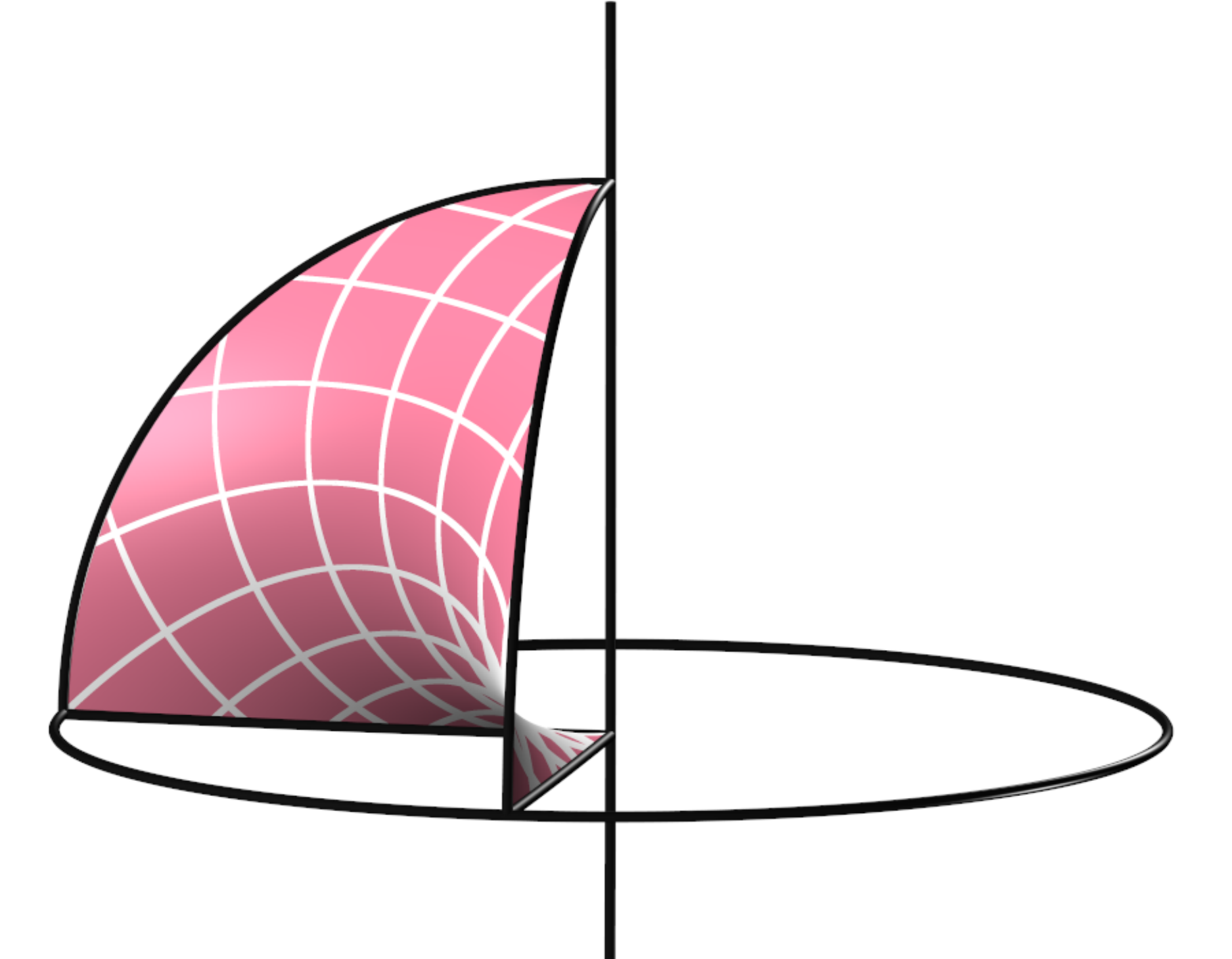}
\hspace{0.75cm}
\includegraphics[width=0.1\textwidth]{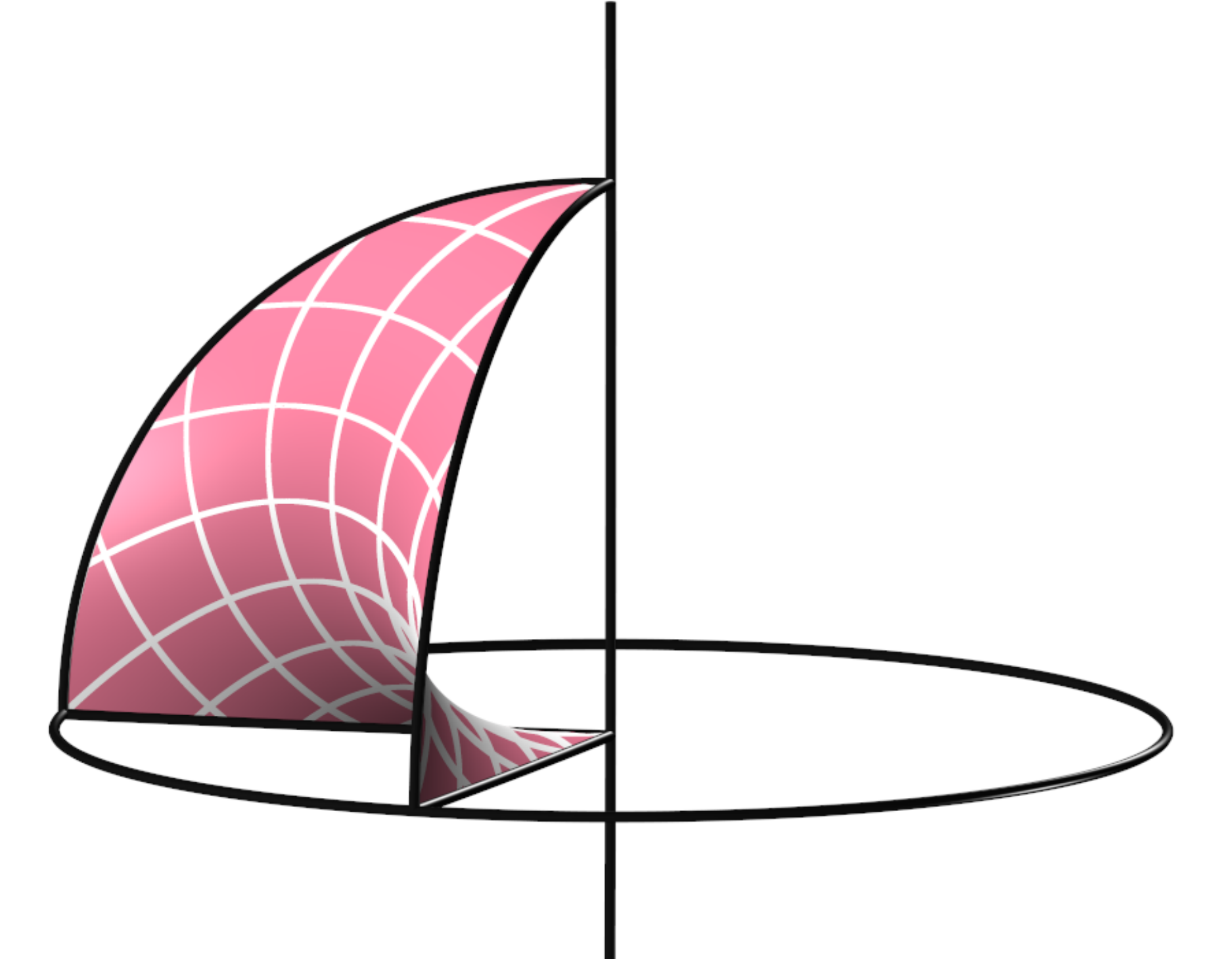}
\hspace{0.75cm}
\includegraphics[width=0.1\textwidth]{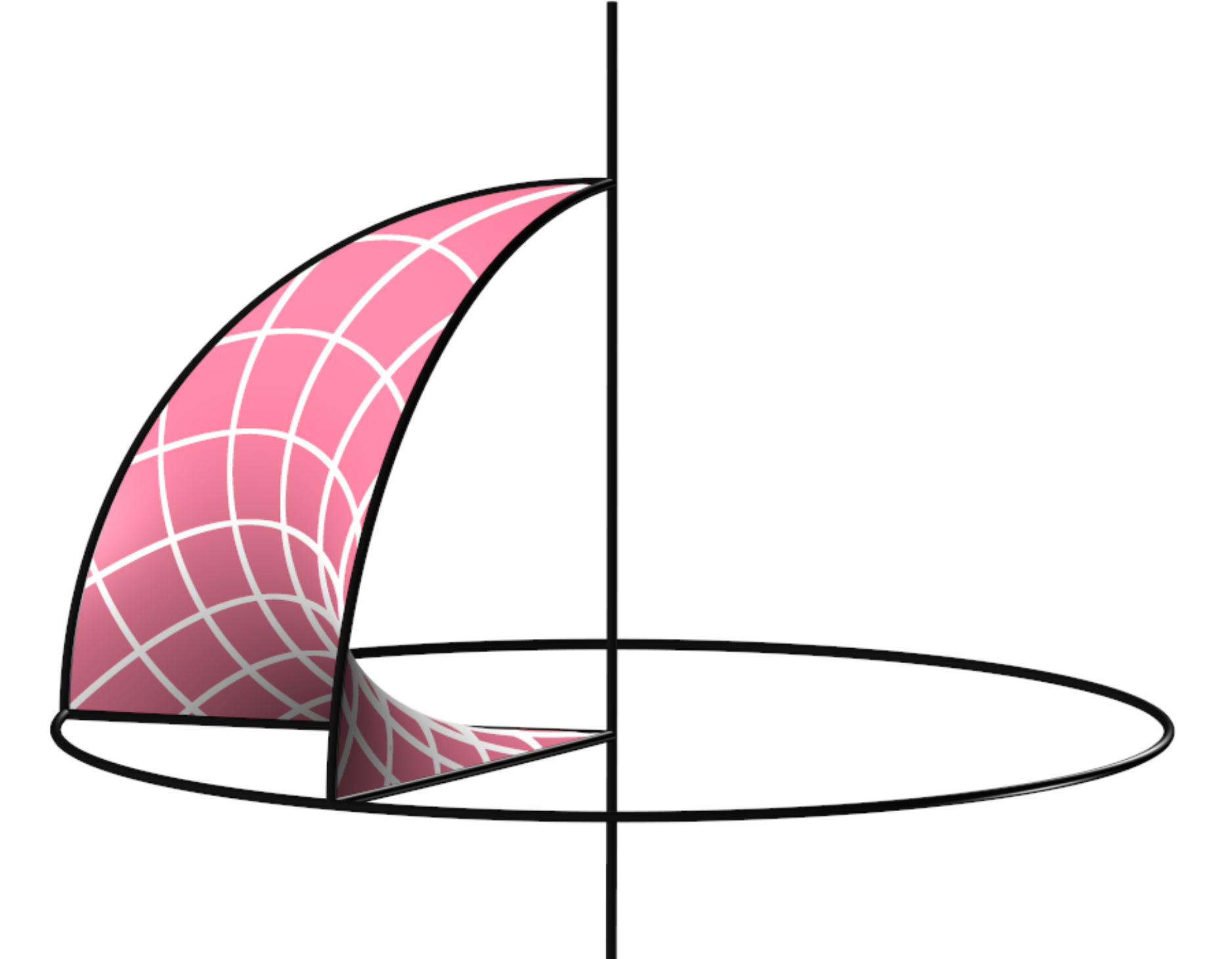}
\hspace{0.75cm}
\includegraphics[width=0.1\textwidth]{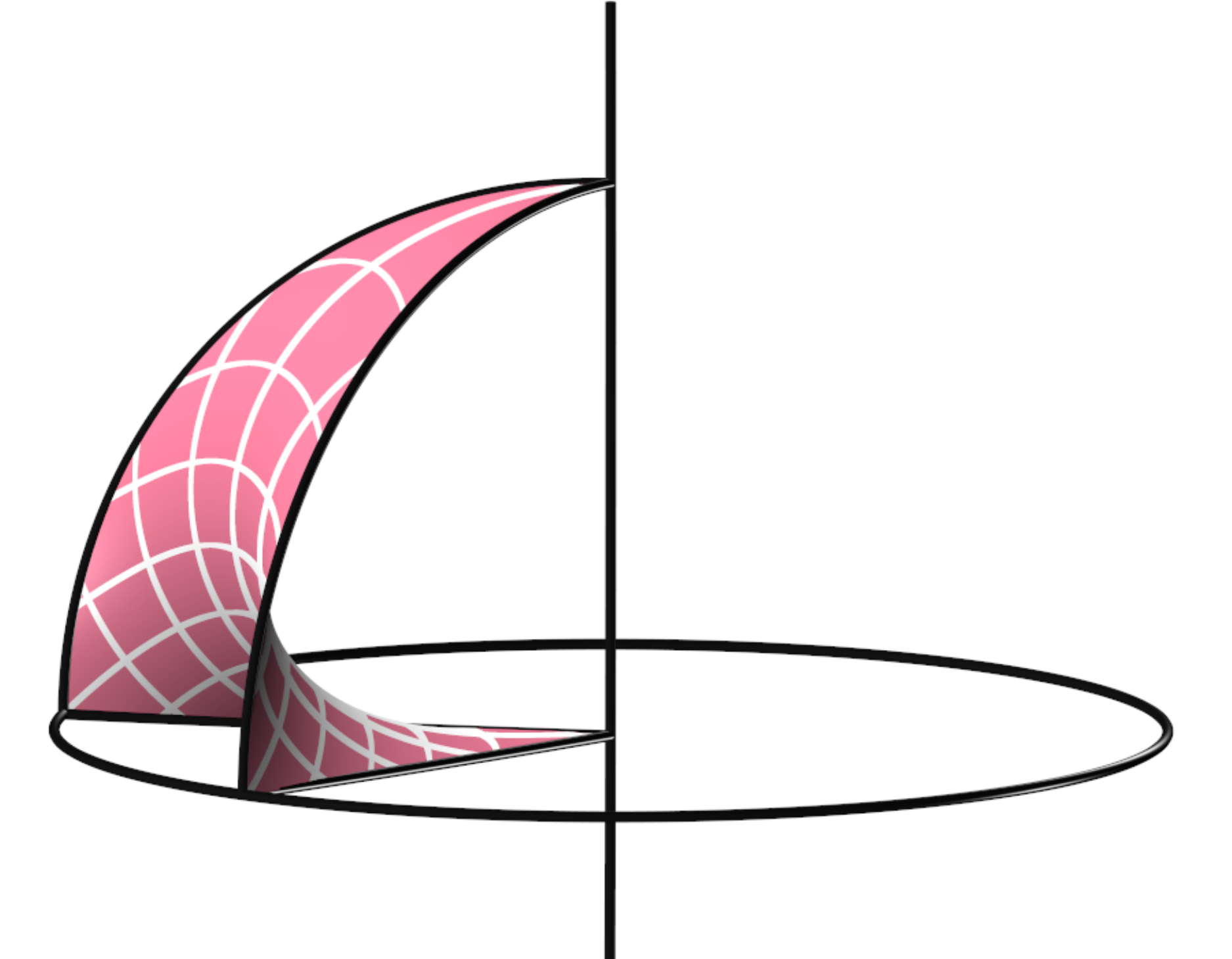}
\hspace{0.75cm}
\includegraphics[width=0.1\textwidth]{lawsonflow-frame3}
\caption{
\footnotesize
The deformation of minimal surfaces
from the Clifford torus to Lawson $\xi_{2,1}.$ The images in the second row show the corresponding Plateau solutions
for varying angles of the geodesic polygon.}
\label{fig:lawson}
\end{figure}

The investigation and construction of closed surfaces with special geometric properties is an important subject
in differential geometry. Of particular interest are minimal surfaces and constant mean curvature (CMC) surfaces
in space forms. 
Global properties of CMC surfaces were first considered by Hopf,
showing that all CMC spheres are round.
 This result was generalized by Alexandrov \cite{Ale} in the 1950s, who showed that the round spheres are the only embedded compact CMC surfaces in $\R^3.$ It was  a long standing conjecture by Hopf that this should be also true for immersed surfaces in Euclidean $3-$space until  Wente  \cite{We} constructed counter examples in the class of tori in 1986. 
Although the generalized Weierstrass representation for CMC surfaces \cite{DPW} gives all CMC immersions from simply connected domains into space forms, global questions (like the construction of closed surface with genus $g\geq2$) 
are very hard to study in this setup. 
The reason is that the moduli space of compact CMC surfaces of genus $g$ is finite dimensional, while the space of conformal
 CMC immersions of a disc or a plane is always infinite dimensional. For tori this problem was substantially simplified in
 the work of
Abresch \cite{Ab}, Pinkall and Sterling \cite{PiSt}, Hitchin \cite{Hi} and Bobenko \cite{B}  in the 1980s, 
by reducing to a finite dimensional problem via integrable system methods.
These methods were used to produce various new examples of CMC tori.
Recently Hauswirth, Kilian and Schmidt studied the moduli space of all
minimal tori in $S^2\times \R$ by integrable systems methods. They proved that properly embedded minimal annuli
in $S^2\times\R$ are foliated by circles \cite{HaKiSch, HaKiSch1}.
 The technique (based on the so-called Whitham deformation of the spectral data) 
 has also been applied for the investigation
of Alexandrov embedded CMC tori in $S^3$ \cite{HaKiSch2}, giving an alternative approach to the conjectures
of Lawson (proved by Brendle \cite{B}) and  Pinkall-Sterling (proved by Andrews and Li \cite{AL} using Brendle's method).
On the other hand, examples of and methods for closed CMC surfaces of higher genus are rare. Lawson \cite{L} constructed closed embedded minimal surfaces in the round 3-sphere for every genus and
Kapouleas \cite{Ka1,Ka2} showed the existence of compact CMC  surfaces in Euclidean 3-space for all genera.
Since these constructions are implicit even fundamental geometric properties like the area
cannot be explicitly computed. \\

 The integrable system approach to CMC surfaces is based on the associated 
$\C^*$-family of flat 
$\SL(2,{\C})$-connections $\lambda\in \C^*\mapsto\nabla^\lambda.$ 
Knowing the family of flat connections is tantamount to knowing the CMC surface, as the surface is given by the gauge
between two trivial connections $\nabla^{\lambda_1}$ and $\nabla^{\lambda_2}$
 for 
$\lambda_1\neq\lambda_2\in S^1\subset\C^*$ 
with mean curvature $H=i\frac{\lambda_1+\lambda_2}{\lambda_1-\lambda_2}.$ An important property of this family of flat connections is the unitarity for the connections along $\lambda \in S^1$. 
In the abelian case of CMC tori, $\nabla^\lambda$ splits for generic $\lambda\in\C^*$ into a direct sum of flat line bundle connections. 
Therefore, the associated $\C^*$-family of flat $\SL(2,\C)$-connections of a CMC torus
is characterized by spectral data parametrizing the corresponding family of flat line bundles on the torus.
For higher genus surfaces, flat $\SL(2,\C)$-connections are generically irreducible and therefore 
the abelian spectral curve theory for CMC tori is no longer applicable.
Nevertheless, if one restricts
to certain symmetric compact Riemann surfaces, it is still possible to  characterize flat symmetric 
$\SL(2,\C)$-connections in terms of flat line bundle connections
on an associated torus via abelianization \cite{He3, HeHe}.
Hence,
the associated family $\nabla^{\lambda}$ 
of a symmetric CMC surface of higher genus is again
determined by spectral data which parametrize flat line bundle connections on a torus. 
The drawback of this approach is that for higher genus the unitarity condition for $\nabla ^{\lambda}$ along the unit circle is only given implicitly in terms of the Narasimhan-Seshadri section. 
Understanding the construction of spectral data satisfying  the unitarity condition is our main purpose.\\

In this paper we propose a more explicit construction of higher genus CMC surfaces 
and the Narasimhan-Seshadri section. The basic idea is to start at a well known surface, e.g. a CMC torus, where the 
closing conditions are well understood in terms of the spectral data. Then the surface is deformed in a direction which 
changes the genus "continuously". The motivating example is the flow of Plateau solutions from the Clifford torus to Lawson's minimal 
surface of genus 2 and beyond by changing angles of the geodesic polygonal boundary (see Figure \ref{fig:lawson}). 
More generally we construct a flow from CMC tori which gives compact and branched CMC surfaces (with controlled 
branch points and branch order) for rational times and immersed CMC surfaces of genus $g$ for discrete times.
We construct this flow by deforming  (gauge equivalence classes of) the associated families of flat connections:
using the abelianization procedure and under the assumption of certain discrete symmetries, the family of flat connections 
can be reduced to a family of flat 
connections over a  4-punctured torus, i.e., connections on a trivial $\C^2$-bundle over the 2-torus with four simple pole-like singularities. The genus of the surface is encoded in the local monodromies, i.e.,
 in the eigenvalues of the residues 
of the flat connections. Deforming these eigenvalues induces a flow from given spectral data of CMC tori to the spectral 
data of higher genus CMC surfaces through branched CMC immersions (see Figure \ref{fig:lawson2} and Figure \ref{fig:lawson3}). \\

The paper is organized as follows: In Section \ref{spec_chap} we recall the integrable systems techniques for
CMC surfaces from a general point of view.
We then describe the spectral curve theory for CMC tori in our setup and consider two classes of examples, the homogeneous CMC tori and the 2-lobed Delaunay tori.
In Section \ref{arbitrary_genus}
we develop a spectral curve theory for higher genus CMC surfaces with discrete symmetries 
generalizing the theory for CMC tori  (see \cite{Hi} or Section \ref{CMC_tori}) 
and the theory for Lawson symmetric CMC surfaces of genus $2$ in \cite{He3}.
In Section \ref{Whitham flow} we define the flow on spectral data and prove its short time existence 
for initial data given by homogeneous CMC tori. In particular we prove that flowing the spectral data for the Clifford torus is equivalent to the flow of Plateau solutions by changing 
the angle of the fundamental piece in Lawson's construction for the minimal surfaces $\xi_{g,1}.$
We also prove the short time existence of two distinct flows starting at the  
2-lobed Delaunay tori of spectral genus $1$.
Geometrically, these two flows can be distinguished by the fact that the Delaunay tori are cut along curvature lines
between different pairs of points as shown in  Figure \ref{fig:lawson2} and Figure \ref{fig:lawson3}. 
In our theory, the existence of two different flow directions corresponds to the choice whether the family of flat connections contains only stable
points inside the unit disc.\\

\thanks{Acknowledgement: 
We would like to thank the anonymous  referee for his helpful comments which enabled us to improve the
presentation of the paper.
The first author is supported by the European Social Fund and by the Ministry of Science, Research and the Arts Baden-W\"urtemberg, the other authors are supported by the DFG through the project HE 6829/1-1.}

\section{Integrable Surface Theory}\label{spec_chap}
In this 
 Section, we recall the basics of integrable surface theory. We explain how CMC tori can be described from our perspective and consider two families of CMC tori as examples in detail.\\

In the integrable systems approach to CMC surfaces the crucial step is to translate the elliptic PDE $H \equiv const$ into a system of ODEs on the Riemann surface: to a CMC immersion $f\colon M\to S^3$ we can associate  a $\C^*-$family of flat $\SL(2, \C)$ connections as was introduced by
Hitchin \cite{Hi}. In order to construct CMC surfaces it is thus sufficient to write down appropriate families of flat connections, which will turn out to be easier than to solve the PDE directly.\\

In order to make the reader more comfortable with the correspondence between CMC surfaces and their associated families of flat connections, we briefly recall its construction starting with the case $H=0$ (following \cite{Hi}).
Consider $S^3$ as the Lie group $S^3\cong SU(2)$, then its Levi-Civita connection is given by  $$\nabla=d+\tfrac{1}{2}g^{-1} dg,$$ where $g^{-1} dg$ is the Maurer-Cartan form acting on
$\mathfrak{su}(2)$ by the adjoint representation. It gives rise to the (special unitary) spin connection 
$$\nabla=d+\tfrac{1}{2}g^{-1} dg$$ on the trivial $\C^2$-bundle over $S^3$, where $\mathfrak{su}(2)$ is acting on $\C^2$ through the standard representation.\\

Let $f: M \rightarrow S^3$ be a minimal conformal immersion from a Riemann surface $M$.  
Pulling back $\nabla$ by $f$ we obtain a special unitary connection (with respect to the standard hermitian inner product) on $\C^2\times M\to M,$
 also denote by $\nabla$, given by 
 $$\nabla=d+\tfrac{1}{2}f^{-1}df.$$
Since $f$ is minimal and conformal,
  it is harmonic, giving rise to the following equations:
\begin{equation}\label{harmeq}
\begin{split}
d^\nabla\phi&=0\\  d^\nabla*\phi&=0,
\end{split}
\end{equation}
where $\phi := \tfrac{1}{2}f^{-1}df$ is the connection $2-$form and $*$ is the negative of the Hodge star (i.e. $*dz=idz,\; *d\bar z=-id\bar z$ for a locally defined holomorphic function $z\colon U\subset M\to \C$).
The first equation of \eqref{harmeq} is a reformulation of the Maurer-Cartan equation for $f^{-1}df$ and the second equation is equivalent to $f$ being harmonic.
After decomposing $\phi$ into its complex linear part $\Phi=\tfrac{1}{2}(\phi-i*\phi)$ and its complex anti-linear part $-\Phi^*$, so that $\phi=\Phi-\Phi^*,$ 
we can rewrite \eqref{harmeq}
as
\begin{equation}\label{harmeq2}
\begin{split}
d^\nabla\Phi&=\dbar^\nabla\Phi=0\\  d^\nabla\Phi^*&=\partial^\nabla\Phi^*=0,
\end{split}
\end{equation}
where$\partial^\nabla:=\tfrac{1}{2}(\nabla-i*\nabla)$ and  $\dbar^\nabla:=\tfrac{1}{2}(\nabla+i*\nabla).$\\

Consider now the $\C^*-$family of special linear connections
$$\nabla^\lambda=\nabla+\lambda^{-1}\Phi-\lambda\Phi^*, \quad \lambda \in \C^*,$$
which is called the associated family of flat $\SL(2, \C)$ connections of $f$. That the curvature for all $\nabla^{\lambda}$ vanishes for all $\lambda$ can be seen as follows: firstly, by equation \eqref{harmeq2} the curvature of $\nabla^\lambda$
 is independent of $\lambda\in\C^*.$ Further, we have by definition $\nabla^{1}=d+f^{-1} df$ and  $\nabla^{-1}=d$ are both trivial, and hence of zero curvature. \\

In this formulation the conformality of $f$ is equivalent to $\Phi$ being nilpotent  (see Proposition 1.8 of \cite{Hi}) and $f$ being immersed translates to $\Phi$ being non vanishing.
For minimal tori the connections
$\nabla^\lambda$ are unitary for $\lambda\in S^1,$ since in this case the connection $1-$form $\lambda^{-1}\Phi-\lambda\Phi^*$
is skew-adjoint. Moreover, the immersion
$f$ is constructed as the gauge transformation between $\nabla^{1}$ and $\nabla^{-1}$ (where we have identified $S^3=SU(2)$). \\

For conformally parametrized CMC surfaces with non-zero mean curvature $H\neq0,$ the construction becomes slightly more complicated, since $f$ is
no longer harmonic.
Nevertheless, by the Lawson correspondence or by the fact that the Gauss map of a CMC surface in $S^3$ is harmonic,
we still obtain an associated family of flat connections as follows:
split $f^{-1}df=\Psi-\Psi^*$
into complex linear and complex anti-linear parts as before, and rescale
$$\Phi=\frac{\lambda_2}{1+\lambda_2}\Psi \;\; \text{ and }\; \;  \Phi^*=\frac{1}{1+\lambda_2}\Psi^*,$$
where $$\lambda_2=\frac{- i H+1}{ i H+1}\in S^1\subset\C^*.$$
Define $\nabla:=d+\Phi-\Phi^*$ (which is no longer the pullback of the spin connection of $S^3$) and consider  the associated family of connections 
\begin{equation}\label{associated_family}
\lambda\in\C^*\mapsto \nabla^\lambda=\nabla+\lambda^{-1}\Phi-\lambda\Phi^*.
\end{equation}
Again we have $\nabla^{-1}=d$ and $\nabla^{\lambda_2}=d+f^{-1}df$ are both trivial and that the residue term $\Psi$ at $\lambda=0$ is nilpotent. Vanishing of the curvature of $\nabla^\lambda$ for all $\lambda$ follows from its independence of $\lambda.$ This is equivalent to $d^\nabla\Phi=d^\nabla\Phi^*=0$ which itself follows from a not too long computation involving
the Maurer-Cartan equation and the  formula relating the laplacian of the immersion and its mean curvature; see for example Lemma 2.2 in \cite{SKKR}.\\

This shows one direction of 
Theorem \ref{The1} below.
The reverse direction can be proven by reversing the computations. See also
~\cite{B,He1} for more details.
\begin{The}[\cite{Hi,B}]
\label{The1}
Let $f\colon M\to S^3$ be a conformal CMC immersion. Then its  associated family of flat $\SL(2,\C)$ connections \eqref{associated_family}
is unitary for $\lambda\in S^1\subset\C^*$ and trivial for
$\lambda_1\neq\lambda_2\in  S^1$.  Conversely, given such a family of flat $\SL(2, \C)$ connections with nilpotent $\Phi$, the immersion $f$ given by the gauge between
$\nabla^{\lambda_1}$ and $\nabla^{\lambda_2}$ (identifying
$\SU(2)= S^3$) is conformal and of constant mean curvature 
$H=i\frac{\lambda_1+\lambda_2}{\lambda_1-\lambda_2}$ and has $\nabla^\lambda$ as its associated family.
\end{The}
\begin{Rem} 
Following \cite{KSS} we call the spectral parameter $\lambda_1,\lambda_2\in\C^*$ Sym points. 
The existences of two Sym points is the {\em extrinsic closing condition} while the unitarity of the connections $\nabla^\lambda$ along the unit circle is the {\em intrinsic closing condition}.
\end{Rem}
\begin{Rem}For compact CMC surfaces which are
not totally umbilic the generic connection $\nabla^\lambda$ of the
associated family is not trivial~\cite{Hi}. Moreover, for CMC immersions from a
compact Riemann surface of genus $g\geq2$, the connection
$\nabla^\lambda$ of the associated family is irreducible for generic $\lambda\in\C^*$~\cite{He1}.
\end{Rem}
\begin{Rem}\label{gensym}
In the above theorem we can weaken the condition that there are two spectral parameter $\lambda_1,\lambda_2\in\C^*$ such that $\nabla^{\lambda_k}$ is trivial
(for $k=1,2$) as follows: it is sufficient that the two connections $\nabla^{\lambda_k}$ have the same monodromy representation with values in $\Z_2=\{\pm\Id\}\subset  SU(2).$
Because this subgroup is the center of $SU(2)$, the gauge between these two connections is still well-defined in this situation and it is a conformal CMC immersion with $H=i\frac{\lambda_1+\lambda_2}{\lambda_1-\lambda_2}$.
By an abuse of notation, we also call this family {\em the associated family of flat connections} even if it differs from \eqref{associated_family} by a
$\lambda$-independent  shift given by tensoring with a flat $\Z_2$ line bundle.
\end{Rem}

For $M$ a torus Hitchin \cite{Hi} developed a theory classifying all possible families $\nabla^{\lambda}$ and explicitly parametrizing the corresponding CMC immersions. This procedure depends crucially on the fact that the first fundamental group of the torus is abelian so there is no straightforward generalization of the theory to higher genus surfaces. For higher genus CMC surfaces (to which we restrict if not otherwise stated) it has been proven useful to first consider the gauge equivalence classes of the connections $\nabla^{\lambda}$. Thus let 
$\mathcal A^2=\mathcal A^2(M)$  be the moduli space of flat $\SL(2,\C)$ connections modulo gauge transformations.  This space inherits the structure of a complex analytic variety of dimension $6g-6$ whose singular set consists of the gauge classes of reducible connections. This can be seen by identifying $\mathcal A^2$ with the character variety of $\SL(2,\C)$
representations of the fundamental group of $M$ modulo conjugation as in \cite{G}, or by carrying out an
infinite dimensional K\"ahler reduction as in \cite{Hi1}.\\

For a CMC surface $f$ with associated family of flat connections $\nabla^{\lambda}$ we consider the map
\begin{equation}\label{mathcald}
\mathcal D\colon \C^* \to\mathcal A^2,\; \lambda\mapsto[\nabla^\lambda].\end{equation}
Although in general $\mathcal D$ does not uniquely determine a CMC surface in $S^3$, those CMC surfaces
corresponding to the same $\mathcal D$ are related by a well understood transformation called dressing (see  \cite{BDLQ} for general information about dressing transformations in the setup of associated families). We note that we only consider dressing transformations 
which preserve the topology of the surface. 
Without the topological constraint (i.e., for a simply connected CMC surface) the space of dressing transformations is infinite dimensional, whereas for compact surfaces this space is finite dimensional.
In the abelian case of CMC tori, the dressing transformations  are induced by a shift of the eigenline bundle of the spectral curve (see \cite{McI})
 and are usually called  isospectral deformations. For higher genus CMC immersions 
the space of these dressing transformations are "based" at the  finitely many points $\lambda^i\in\C^*$ at which the holomorphic map
$\mathcal D\colon\C^*\to \mathcal A^2$ represents the gauge class of a reducible connection  (Theorem 7 in  \cite{He3}). \\

The following theorem might be considered as a variation or generalization of the DPW method \cite{DPW} . It
summarizes the above discussion
and generalizes it to the case of (possibly) branched CMC surfaces. Under a bound on the number of branch points the theory for immersed CMC surfaces carries over. The branched CMC surfaces constructed in this paper
(in particular in Theorem \ref{branched_CMC}) obey the given bound. 

\begin{The}\label{lifting_theorem} Let $M$ be a compact Riemann surface of genus $g$ and
let $\mathcal D\colon\C^*\to\mathcal A^2=\mathcal A^2(M)$ be a holomorphic map satisfying
\begin{enumerate}
\item the unit circle $S^1\subset\C^*$ is mapped
into the real analytic subvariety consisting of gauge equivalence classes of unitary flat connections,
\item around $\lambda=0$ there exists a  local lift $\tilde\nabla^\lambda$ of $\mathcal D$ with an expansion $$\tilde\nabla^\lambda\sim\lambda^{-1}\Psi+\tilde\nabla^0 +  \text{higher order terms in } \lambda$$ for a nilpotent $\Psi\in\Gamma(M,K\End_0(V)),$
\item there are two distinct points
$\lambda_1,\lambda_2\in S^1\subset\C^*$ such that $\mathcal D(\lambda_k)$  $k=1,2$ represents the trivial gauge class.
\end{enumerate}
Then
there exists a (possibly branched) CMC surface $f\colon M\to S^3$ inducing the map $\mathcal D$
as the family of gauge equivalence classes $\mathcal D(\lambda)=[\nabla^\lambda]$. The branch points of $f$ are given by the zeros of $\Psi$
and $f$ is unique up to dressing transformations if the number of zeros of $\Psi$ (counted with multiplicity) is less than $2g-2$.

Conversely, every CMC surface determines a holomorphic $\C^*$-curve into $\mathcal A^2$ via \eqref{mathcald}.
\end{The}
\begin{proof} 
We have two cases to consider. Either the family $\tilde \nabla^\lambda$ is reducible for all $\lambda$ or it is generically irreducible.
In the first case the corresponding CMC surface is a branched covering of a CMC torus by \cite{Ge} and the reconstruction of the surface is carried out in \cite{Hi}. In the latter case the proof of the construction works analogously to the proof of Theorem 8 in \cite{He3}. Uniqueness part follows with the same arguments as in Theorem 7 of \cite{He3} provided that the Higgs pair $(\dbar^{\tilde\nabla^0},\Psi)$ is stable, i.e.,  every $\Psi$-invariant line subbundle of $V$ has negative degree. Since $\Psi$ is nilpotent, the kernel bundle $L=\ker \Psi$ is the only $\Psi$-invariant subbundle. Moreover,
because $\Psi$ gives rise to a non-vanishing holomorphic section of $KL^2$ (see Section 2.1 in \cite{He1} or the proof of Lemma \ref{atlambda=0again} below)
 the degree of $L$ is negative if and only if the number of zeros of $\Psi$ is less than the degree of the canonical bundle.
\end{proof}
\begin{Rem}\label{gensym2}
As in Remark \ref{gensym} we can weaken the extrinsic closing condition so that $\mathcal D(\lambda_k)$ only need to represent for $k=1,2$ the same flat
$\Z_2$-bundle.
\end{Rem}
\begin{Rem}
 Since $\mathcal D$ is holomorphic and maps into the real analytic subvariety of $\mathcal A^2$ consisting of gauge equivalence classes of unitary flat connections for $\lambda \in S^1$,  $\mathcal D$ is already determined by its values on $D_1:= \{\lambda\  | \ |\lambda|^2 \leq1\}$ as a consequence of the Schwarzian reflection principle. \end{Rem}

\section{CMC tori revisited}\label{CMC_tori}
In this section we consider the case of CMC tori and rephrase the well-known spectral curve theory for CMC tori of \cite{Hi} in the context of Theorem  \ref{lifting_theorem}.  Additionally, we prove two technical lemmas showing the non-degeneracy of the initial data of the flow defined in Section \ref{Whitham flow}.
  
 \subsection{Flat line bundles on tori} 
Consider the Riemann surface of genus $1$ given by 
 \[T^2=\C/\Gamma,\]
 where $\Gamma=2\Z+2\tau\Z$ is the lattice generated by $2$ and $2\tau$ for some $\tau\in\C$ with 
$\Im(\tau)>0.$ We assume by a change of basis that 
$-\tfrac{1}{2}<\Re(\tau)<\tfrac{1}{2}.$
The Jacobian of $T^2$
is given by
\[\mathrm{Jac}(T^2)=\overline{H^0(T^2,K)}/\Lambda\] where
 \[\Lambda=\{\bar\eta\in \overline{H^0(T^2,K)}\mid \int_\gamma (-\eta+\bar\eta)\in 2\pi i\Z \text{ for all closed curves } \gamma \text{ in } T^2\}.\] 
 
The Jacobian can be viewed as the moduli space of holomorphic structures on the 
 trivial line bundle. By fixing the global anti-holomorphic 1-form $d\bar w$ 
 we can identify
 \[\Lambda \cong \tfrac{\pi i}{\tau-\bar\tau}\Z+\tfrac{\pi i\tau}{\tau-\bar\tau}\Z.\]
The moduli space of flat line bundle connections  $\mathcal A^1=\mathcal A^1(T^2)$ is similarly given by
\[\mathcal A^1=\mathcal H^1(T^2,\C)/\tilde\Lambda,\] 
where  $\mathcal H^1(T^2,\C)$ is the space of complex valued harmonic 1-forms on $T^2$
and  \[\tilde\Lambda=\{\omega\in \mathcal H^1(T^2,\C)\mid \int_\gamma \omega\in 2\pi i\Z \text{ for all closed curves } \gamma \text{ in } T^2\}\]
is a  lattice of full rank. The moduli space of flat line bundle connections can be seen as an affine holomorphic bundle
\begin{equation}\label{affine_bundle}
(.)''\colon\mathcal A^1(T^2)\to \mathrm{Jac}(T^2)
\end{equation}
by assigning to a representative $\nabla$ of a gauge class  $[\nabla] \in\mathcal A^1(T^2)$ the isomorphism class of the induced holomorphic  structure $\dbar^\nabla.$ 

\subsection{Spectral curve theory for CMC tori}

The main difference between CMC tori and  higher genus CMC surfaces  is that the first fundamental group of a torus $\pi_1(T^2)$ is abelian. Thus for a flat unitary connection and $p \in T^2$ there is a basis of $V_p$ which simultaneously diagonalizes the monodromy of $\nabla$ along both generators of $\pi_1(T^2,p).$  Therefore $\nabla$ splits into two line bundle connections on its parallel eigenlines $L^{\pm}$, which are dual to each other. In fact, a generic flat $SL(2, \C)$ connection on a torus has diagonalizable monodromy and splits into two flat line bundle connections.\\

For the associated family $\nabla^{\lambda}$ of a CMC torus this implies that $\nabla^{\lambda}$ splits into flat line bundle connections on the eigenlines of the monodromy $L^{\pm}$ for generic $\lambda \in \C^*,$ since the family is unitary along $S^1.$ More concretely, this means that for a generic $\lambda \in \C^*$ the connection $\nabla^{\lambda}$  is gauge equivalent to
\begin{equation}\label{connection}
d+ \begin{pmatrix} -\chi(\lambda) d\bar w + \alpha(\lambda) dw &0\\0&  \chi(\lambda) d\bar w  - \alpha(\lambda) dw\end{pmatrix},\end{equation}
with respect to the splitting $V = L^+_{\lambda} \oplus L_{\lambda}^-$, where $dw$ is the (non trivial) holomorphic 1-form on the torus.
The functions $\chi(\lambda)$ and $\alpha(\lambda)$ are locally defined and holomorphic in $\lambda$ away from exceptional values of the spectral parameter $\lambda_i$ (and are independent of $w \in T^2$). \\

In fact, it is shown in \cite{Hi} that $\nabla^{\lambda}$ is gauge equivalent to \eqref{connection} except at finitely many points $\lambda_1, \dots, \lambda_k$. 
At these points $\lambda_i$ the eigenlines of the monodromy $L^{\pm}_\lambda$  coalesce. The necessary condition for this to happen is that the corresponding flat line bundle connection is self-dual. Then the eigenlines are equipped with a flat spin connection, implying that the trace of the monodromy of $\nabla^{\lambda_i}$ along an arbitrary closed curve is $\pm2.$ \\

 In order to obtain globally defined holomorphic maps $\chi$ and $\alpha$  we replace the spectral plane $\C^*$ by a double covering of $\C^*$ branched at $\lambda_i.$ This new parameter space can be compactified by adding two points over $\lambda=0$ and $\lambda = \infty$ as shown in \cite{Hi}. 
The resulting (compact) hyperelliptic Riemann surface determined by the equation \[\Sigma : \xi^2 = \lambda \Pi_{i =1}^k(\lambda - \lambda_i)\]
 is called the spectral curve of the CMC torus.\\

By \eqref{associated_family} the associated  family of holomorphic structures 
$\bar \del^{\lambda}=(\nabla^\lambda)''$  extends through $\lambda= 0$ while the family of anti-holomorphic structures $\del^{\lambda}=(\nabla^\lambda)'$ has a simple pole at $\lambda = 0.$ Thus $\nabla^{\lambda}$ and its flat eigenline bundles are parametrized
by the spectral data $(\Sigma,\chi,\alpha)$, where
\[\chi \colon \Sigma\setminus\{\infty\}\to \mathrm{Jac}(T^2) \cong \C /\Lambda\] is an odd holomorphic map 
to the Jacobian of  $T^2$ which is isomorphic to $\C/ \Lambda$ for a lattice $\Lambda,$  and \[\alpha \colon \Sigma\to \overline{\mathrm{Jac}(T^2)}\] is an odd meromorphic map 
to the moduli space of anti-holomorphic line bundles over $T^2$ of degree $0$ whose only pole is a first order pole at $\lambda = 0.$ \\

In general, a CMC immersion is not uniquely determined by its spectral data $(\Sigma,\chi,\alpha)$ due to the fact that a 
CMC immersion whose spectral curve $\Sigma$ has spectral genus $\geq1$ always admit non-trivial isospectral deformations (including reparametrizations). These  deformations are given by shifts of the so-called (holomorphic) eigenline bundle 
$E_p\to\Sigma$
(in which definition it is necessary to fix a base point $p \in T^2$) in the Picard variety. 
Hitchin has shown in \cite{Hi} that the spectral data $(\Sigma,\chi,\alpha)$
together with the eigenline bundle $E_p$ uniquely determine the CMC immersion as a conformal map;
see also \cite{McI} for the relationship between the eigenline bundle and dressing transformations. For
the CMC tori of
spectral genus  $\leq2$ an isospectral deformation only changes the parametrization of the CMC surface.
Hence, we ignore the eigenline bundle and the isospectral deformations in the discussion of CMC tori of spectral genus $\leq2$  in Section \ref{Tori_spec_gen_0} and \ref{Tori_spec_gen_1} below.

The extrinsic closing condition, i.e., the condition that the immersion has no periods, is guaranteed by the existence
of two points $\xi_1,\xi_2\in \Sigma$ lying over the  Sym points $\lambda_1\neq\lambda_2\in S^1 \subset \C^*$, i.e. $\lambda(\xi_k)=\lambda_k$ for $k=1,2$, 
which satisfy
 \[ \chi(\xi_1)=\chi(\xi_2)= 0\in \mathrm{Jac}(T^2).\]

\begin{Rem}\label{holo_info}
The map $\chi\colon\Sigma\setminus\{\infty\}\to \mathrm{Jac}(T^2)$ uniquely determines the meromorphic map $\alpha$ by the condition that $\nabla^{\lambda}$ is unitary along $S^1 \subset \C^*,$ i.e.,  $\alpha(\xi) = \overline{ \chi (\xi)}$ for points $\xi\in\lambda^{-1}(S^1)$ lying over the unit circle.
Also, the map $\chi$ is already determined by its values on the preimage of the
closed unit disc.
Thus, to construct a map $\mathcal D$ satisfying the properties of Theorem \ref{lifting_theorem}
boils down to write down an appropriate holomorphic map $\chi$ on the preimage of the disc $D_{1+ \epsilon} \subset \C^*.$ The main issue will be  that the corresponding map $\alpha$ must have simple pole over $\lambda = 0.$ For tori, in contrast to the spectral data of higher genus CMC surfaces, this condition is understood and is equivalent to $\chi \colon\Sigma\setminus\{\infty\}\to \mathrm{Jac}(T^2)$ having a first order pole at the point lying over $\lambda = \infty,$
i.e., $d\chi$ is a meromorphic 1-form with a pole of order $2$ over $\lambda = \infty$ without residue.
\end{Rem}
\begin{Rem}\label{shift}
In order to obtain a unified theory for higher genus surfaces, we apply an overall shift
of $\mathrm{Jac}(T^2)$ by the half lattice point $-\tfrac{\pi i (1+ \tau)}{4\tau}$. The maps $\chi$ and $\alpha$ are shifted accordingly:
\begin{equation}\label{shift-chi}
\begin{split}
\chi_{shift}&=\chi-\tfrac{\pi i (1+ \tau)}{4\tau}\\
\alpha_{shift}&=\alpha+\tfrac{\pi i (1+ \bar\tau)}{4\bar\tau}.\\
\end{split}
\end{equation}
 In the following, abusing notations, we will denote $\chi_{shift}$ and $\alpha_{shift}$  again by $\chi$  and $\alpha$.
 Note that this shift gives rise to tensoring the associated family by a flat $\Z_2$-bundle; see also \ref{gensym} and \ref{gensym2}.
\end{Rem}

We next determine the spectral data for those tori, which  will serve as the initial condition for the short time existence of our flow in the Sections \ref{31} and \ref{32}.
\subsection{Homogeneous CMC tori}\label{Tori_spec_gen_0}
Homogenous tori are the simplest CMC tori in $S^3.$ They are  the product of two circles with different radii and can be parametrized by 
$$f(x,y) = (\tfrac{1}{r} e^{ir x}, \tfrac{1}{s} e^{isy}) \subset S^3 \subset \C^2, \quad r,s \in \R, \quad r^2+ s^2 = 1.$$
Thus (simply wrapped) homogenous tori always have rectangular conformal types so
we identify $T^2=\C/\Gamma$ where $\Gamma=2\Z+2\tau \Z$ for some
$\tau\in i \R^{\geq1}$. The Jacobian of $T^2$ is the dual torus given by
\[\mathrm{Jac}(T^2)=\overline{H^0(T^2,K)}/\Lambda \cong \C/(\tfrac{\pi i}{2 \tau}\Z+\tfrac{\pi i}{2}\Z),\]
where the isomorphism is given by the trivializing section $d\bar w.$
For homogenous tori the spectral curve $\Sigma$, which  has genus $0$, is defined by the algebraic 
equation
\[\xi^2=\lambda.\]
Since $\Sigma \cong\CP^1$ is simply connected, every meromorphic map
\[\chi\colon  \CP^1  \to \mathrm{Jac}(T^2)\]
lifts to a meromorphic function
$\hat{\chi} \colon  \CP^1  \to \C$.
By Remark \ref{holo_info} and because $\chi$ is odd we obtain
\begin{equation}\label{hom_tori_spec}
\hat{\chi}(\xi)=\tfrac{\pi i R}{4\tau}\xi d\bar w+\gamma
\end{equation}
for some $R\in\C^*$ and $\gamma\in\tfrac{1}{2}\Lambda.$ 
The constant term $\gamma$ determines the spin class of the corresponding CMC immersion.
By rotating the spectral plane we may assume that $R\in\R^{>0}.$\\

The extrinsic closing condition is that there are two trivial connections $\nabla^{\lambda_1}$ and $\nabla^{\lambda_2}$ for distinct $\lambda_1\neq\lambda_2 \in S^1.$ If the intrinsic closing condition holds the extrinsic closing condition is equivalent to $\chi(\pm\xi_i) = 0 \in \mathrm{Jac}(T^2)$ at all $\xi_i$ satisfying  
$\xi_i^2 = \lambda_i$.   After applying the shift \eqref{shift-chi} the extrinsic closing condition is equivalent to the existence of four points
\[\pm\xi_1,\pm\xi_2\in S^1\subset\C^*\]
such that 
\[\hat{\chi}(\pm\xi_{1,2})\in \tfrac{\pi i (1+\tau)}{4\tau}d\bar w+\Lambda.\]

The function $\hat\chi$ is linear for $\Sigma \cong \C P^1$ thus the image of $S^1$ under $\hat \chi$ is a circle itself. Further, since $\Gamma$ is rectangular the lattice $\Lambda$ of its Jacobian is also rectangular. A computation (comparing the monodromies of the $\nabla^{\lambda}$ corresponding to $\chi$ and the actual associated family) shows that the spectral data of homogenous tori are given by the choice $\gamma = 0$ (after the shift) and 
\begin{equation}\label{homogeneous_sym}
R=\sqrt{1+\tau\bar\tau},
\end{equation}
see \S  6 of \cite{Hi}. Note that \eqref{homogeneous_sym} gives the smallest possible $R$ in \eqref{hom_tori_spec} for which the image of $S^1$ under $\chi$ contains four lattice points of $\Lambda.$

\subsection{2-lobed Delaunay tori}\label{Tori_spec_gen_1}
Next we describe the spectral data $(\Sigma,\chi)$ for certain CMC tori of revolution. Such a torus is given by the rotation of a profile curve in the upper half plane, 
viewed as the hyperbolic plane  $H^2,$ around the $x-$axis, where we consider $S^3$ as the one point compactification of $\R^3.$ The torus has constant 
mean curvature if and only if its profile curve is elastic in $H^2$. The $2$-lobed Delaunay CMC tori are those whose profile curve closes after two periods of its 
geodesic curvature in $H^2$; see \cite{LHe, KSS} for details.  The conformal type of a CMC torus of revolution is rectangular and determined by a 
lattice $\Gamma=2\Z+2\tau \Z$ for some
$\tau\in i \R^{>0}$. The corresponding spectral curve $\Sigma$ is a torus and thus it can be identified with $\C / \Gamma_{spec}$ for a lattice $ \Gamma_{spec}$.  The spectral curve branches over $\lambda = 0$ and  the branch points reflect across the unit circle, thus $\Sigma= \C / \Gamma_{spec}$ 
is of rectangular conformal type and can be identified with
\[\Sigma=\C/(\Z+\tau_{spec}\Z),\] where $\tau_{spec}\in i\R.$
We use the coordinate $\xi$ on the universal covering $\C$ of $\Sigma$ and consider
\[\lambda\colon\C/(\Z+\tau_{spec}\Z)\to\CP^1\]
as the holomorphic map of order two determined by its ramification points
$$[0],[\tfrac{1}{2}],[\tfrac{1+\tau_{spec}}{2}],[\tfrac{\tau_{spec}}{2}]\in\C/(\Z+\tau_{spec}\Z)$$ and its branch points
\begin{equation}\label{lambda-r}
\lambda([0])=0,\, \lambda([\tfrac{1}{2}])=r, \, \lambda([\tfrac{1+\tau_{spec}}{2}])=\frac{1}{r}, \,  \lambda([\tfrac{\tau_{spec}}{2}])=\infty,
\end{equation} for some $r \in (0,1).$
The preimage of the unit circle $S^1\subset\C^*$ under $\lambda$ has two components
\[C^\pm=\{[s\pm\tfrac{1}{4}\tau_{spec}]\in\C/(\Z+\tau_{spec}\Z)\mid s\in\R\}\]
and the preimage of the unit disc is
\[\Sigma^0=\{[x+y i]\in\C/(\Z+\tau_{spec}\Z)\mid x,y\in\R,\, \frac{i}{4}\tau_{spec}<y<-\frac{i}{4}\tau_{spec}\}.\]
Because there are  no meromorphic functions of degree $1$ on a compact Riemann surface of genus $g> 0$, the map
\[\chi \colon\Sigma\setminus\lambda^{-1}(\{\infty\})\to \mathrm{Jac}(T^2)\cong  \C/\Lambda\]
must have periods, i.e., there is no global lift of $\chi$ mapping to the universal covering of $\mathrm{Jac}(T^2)$.
Nevertheless, the differential $d \chi$ is a well-defined meromorphic 1-form (with values in $\overline{H^0(T^2,K)} \cong \C$) with a double pole at $[\frac{\tau_{spec}}{2}]=\lambda^{-1}(\infty)$ and no other singularities. As
$\chi$ is odd, we know that $\chi([0])$ is a spin point (or half lattice point) of $\mathrm{Jac}(T^2).$ 
The 2-lobed Delaunay tori bifurcate from the homogeneous tori, i.e.,  for $\tau_{spec}\to0$ the spectral data converge to the spectral data of a homogeneous torus, as is shown in \cite{KSS}. Thus (after the shift) $\chi([0])=0 \in \mathrm{Jac}(T^2)$
is the trivial holomorphic line bundle. Moreover,  $d\chi$ has only a period
in the direction $1\in(\Z+\tau_{spec}\Z)=\pi_1(\bar\Sigma),$ and no period in the direction of $\tau_{spec},$ as one can deduce by carefully studying the
behavior of the spectral data for $\tau_{spec} \to 0.$
Thus
\begin{equation}\label{weierstrass_zeta_L}
d\chi=(a\wp(\xi-\tfrac{\tau_{spec}}{2})+b) d\xi\otimes\frac{\pi i}{2\tau} d\bar w,
\end{equation}
where $\wp$ is the Weierstrass $\wp$-function on $\C/\Lambda$ and $a$ and $b$ are uniquely determined by 
\begin{equation}\label{2lobe_spec_periods}
\begin{split}
\int_{\tau_{spec}}(a\wp(\xi-\tfrac{\tau_{spec}}{2})+b) d\xi&=0,\\
\int_{1}(a\wp(\xi-\tfrac{\tau_{spec}}{2})+b) d\xi&=2.
\end{split}
\end{equation}
The last equation reflects the fact that the $2$-lobed Delaunay tori converge to the homogenous torus with $\tau = \sqrt{3} i$ and there the image of $S^1$ for the limit map $\tfrac{2\tau}{\pi i}\chi$ has perimeter $2.$ For closed surfaces the period along $1 \in \Gamma_{spec}$ is necessarily an integer due to the normalization in \eqref{weierstrass_zeta_L}.
Thus it remains constant along the continuous family.  \\

The condition that the map $\chi$ takes values in $\tfrac{\pi i (1+\tau)}{4\tau}d\bar w+\Lambda$ at two disjoint $\xi_i \in C^+$ then determines the conformal type $\tau=\tau(\tau_{spce})$ of the $2-$lobed Delaunay torus as follows: let $\zeta$ be the Weierstrass 
$\zeta$-function defined by $-\zeta'(\xi)=\wp(\xi)$ and $\lim_{\xi\to 0}(\zeta(\xi)-\xi^{-1})=0$, and let $\eta_3:=\zeta(\tfrac{\tau_{spec}}{2})$ and $\eta_1:=\zeta(\tfrac{1}{2})$. 
It is well-known (see \cite{WhWa}) that
\[\eta_1\tfrac{\tau_{spec}}{2}-\eta_3\tfrac{1}{2}=\tfrac{\pi}{2} i.\]
Together with \eqref{2lobe_spec_periods} this yields
\begin{equation}\label{2lobe_spec_periods2}
\begin{split}
a&=-\tfrac{\tau_{spec}}{\pi i}\\
b&=-2\tfrac{\eta_3}{\pi i}.\\
\end{split}
\end{equation}

Since $\Sigma$ is rectangular, the constants $a$ and $b$ are real; moreover,
$a$ is negative. By construction there exists a smallest $s_0\in\R^{>0}$ such that
\begin{equation}\label{definition-s0}
\Re \int_{0}^{s_0+\frac{1}{4}\tau_{spec}}(a\wp(\xi-\tfrac{\tau_{spec}}{2})+b d\xi=1.
\end{equation}
Define
\[h:=\Im\int_{0}^{s_0+\frac{1}{4}\tau_{spec}}(a\wp(\xi-\tfrac{\tau_{spec}}{2})+b) d\xi.\]
Using that the imaginary part of $\int(a\wp(\xi-\tfrac{\tau_{spec}}{2})+b) d\xi$
is monotonic decreasing along the curves $[0;\tfrac{1}{2}]\ni t\mapsto[t+\frac{1}{4}\tau_{spec}]\in\Sigma$
 and $[0;\tfrac{1}{4}]\ni t \mapsto[\frac{1}{2}+t\tau_{spec}]\in\Sigma$ together with 
 \[\Im\int_0^{\tfrac{1}{2}}(a\wp(\xi-\tfrac{\tau_{spec}}{2})+b) d\xi=0\]
we obtain that $h>0.$\\

We define the conformal structure of $T^2=\C/\Gamma$  by the lattice $\Gamma=2\Z+2\tau\Z$ with
\begin{equation}\label{tau_tauSpec}
\tau:=h i.
\end{equation}

Since $a$ and $b$ are real and $\Sigma$ is of rectangular conformal type one can compute
that
\[\int_{0}^{-s_0+\frac{1}{4}\tau_{spec}}(a\wp(\xi-\frac{\tau_{spec}}{2})+b) d\xi=-1+\tfrac{\tau}{2}\in\tfrac{1}{2}\Lambda.\]
Moreover, because the period of $(a\wp(\xi-\frac{\tau_{spec}}{2})+b) d\xi$ with respect to $1 \in \Gamma_{spec}$ is $2$, we obtain that
$s_0<\frac{1}{2},$ and therefore \[[-s_0+\frac{1}{4}\tau_{spec}]\neq[s_0+\frac{1}{4}\tau_{spec}]\in\Sigma.\]

This yields the existence of two distinct points $\xi_1=[s_0+\frac{1}{4}\tau_{spec}]\in C^+$
and $\xi_2=[-s_0+\frac{1}{4}\tau_{spec}]\in C^+$ with the property that 
\[\chi(\xi_{k})\in \frac{\pi i (1+\tau)}{4\tau}d\bar w+\Lambda, \; k\in\{1,2\}.\]
The corresponding flat $\SL(2,\C)$ connections $\nabla^{\lambda_k}$ for $\lambda_1\neq\lambda_2\in S^1$
 are then trivial on $T^2$  so $\tau= hi$ is indeed the conformal type of the immersion. Altogether, we have seen the existence
of a family of CMC tori parametrized by the real parameter $\Im(\tau_{spec}).$ 
Note that by Hitchin's energy formula for harmonic maps (\S 13 in \cite{Hi}),  the area of the CMC torus is
$8 i\tau ab.$
Hence $i\tau b<0$ for all $\tau_{spec}\in i\R^{>0}$ yielding $b>0.$\\

We will need the following two lemmas in Section \ref{32} below.
\begin{Lem}\label{non_degenerate1}
The spectral data $(\Sigma,\chi)$ of a 2-lobed Delaunay torus have the property that 
\[\{\xi\in\bar\Sigma^0 \mid \chi(\xi)=0\in \mathrm{Jac}(T^2)\}=\{[0],[\tfrac{1}{2}]\}\subset \Sigma,\]
where $\bar\Sigma^0=\lambda^{-1}(\{\lambda\in\C\mid\lambda\bar\lambda \leq 1\}.$ 
\end{Lem}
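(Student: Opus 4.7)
The plan is to verify by direct computation that $\chi$ vanishes at $[0]$ and $[1/2]$, then rule out further zeros on $\bar\Sigma^{0}$ via a degree/counting argument exploiting the symmetries of $\chi$ and the explicit quasi-periods of its lift.

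For the verification step, the normalization of the shifted $\chi$ yields $\chi([0])=0$. For $[1/2]$, write $\chi(\xi)=G(\xi)\cdot\tfrac{\pi i}{2\tau}d\bar w$ with $G(\xi)=\int_{0}^{\xi}(a\wp(\xi'-\tau_{spec}/2)+b)\,d\xi'$ on the universal cover. The key observation is that the integrand $f(\xi')=a\wp(\xi'-\tau_{spec}/2)+b$ is even in $\xi'$: by $\tau_{spec}$-periodicity of $\wp$ one has $\wp(\xi'-\tau_{spec}/2)=\wp(\xi'+\tau_{spec}/2)$, which is even in $\xi'$ by evenness of $\wp$. Hence $\int_{0}^{1/2}f\,d\xi'=\tfrac{1}{2}\int_{-1/2}^{1/2}f\,d\xi'=\tfrac12\cdot 2=1$ by \eqref{2lobe_spec_periods}, so $\chi([1/2])=\tfrac{\pi i}{2\tau}d\bar w\in\Lambda$, i.e., $\chi([1/2])=0$ in $Jac(T^2)$. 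Moreover $[0]$ and $[1/2]$ are simple zeros: $f(0)=ae_{3}+b$ and $f(1/2)=ae_{2}+b$ are both nonzero for generic $\tau_{spec}$, since vanishing would force $-b/a\in\{e_{2},e_{3}\}$, a codimension-one condition that does not hold along the two-lobed Delaunay family.

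For the counting step, $\chi|_{\bar\Sigma^{0}}$ lifts to a single-valued holomorphic $G\colon\R\times[-T/4,T/4]\to\C$ satisfying $G(\xi+1)=G(\xi)+2$, $G(\xi+\tau_{spec})=G(\xi)$, $G(-\xi)=-G(\xi)$, and $G(\bar\xi)=\overline{G(\xi)}$; zeros of $\chi$ in $\bar\Sigma^{0}$ correspond to $\xi$ in the fundamental domain $[-1/2,1/2]\times[-T/4,T/4]$ with $G(\xi)\in\Z+\tau\Z$. The induced map $\chi|_{\bar\Sigma^{0}}\colon\bar\Sigma^{0}\to Jac(T^2)$ is a holomorphic map from a cylinder to a torus, and the plan is to argue that its degree equals $2$. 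The real-direction quasi-period $+2$ forces a horizontal wrapping of $2$ per 1-cycle of the cylinder, accounting for the two zeros already found at $[0]$ (where $G=0$) and $[1/2]$ (where $G=1$). The symmetries $G(-\xi)=-G(\xi)$ and $G(\bar\xi)=\overline{G(\xi)}$ imply that any additional zero $\xi^{*}$ would come with companions $-\xi^{*},\bar{\xi^{*}},-\bar{\xi^{*}}$ in $\bar\Sigma^{0}$, giving at least two extra simple zeros beyond $[0],[1/2]$. One rules these out by showing that the imaginary extent of $G$ on $\bar\Sigma^{0}$ is strictly smaller than one fundamental imaginary period of $Jac(T^2)$, so the map does not wrap around the Jacobian in the imaginary direction and preimages of a generic target are exactly two.

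The principal obstacle is the bound on $|\Im(G)|$ on $\bar\Sigma^{0}$, i.e., confirming that the image of $\bar\Sigma^{0}$ under $G$ fits into a horizontal strip thinner than the imaginary period of $Jac(T^2)$. Two complementary routes present themselves: first, a direct calculation using the explicit $\zeta$-function expression \eqref{weierstrass_zeta_L}, where one tracks the boundary curves $G(C^{\pm})$ via $\Im(G(s\pm iT/4))$ and uses the Sym-point data to pin down the extremal values; second, a deformation argument from the homogeneous torus limit $\tau_{spec}\to 0$, where the zeros of $\chi$ on $\bar\Sigma^{0}$ can be listed explicitly from the formulas in Section \ref{Tori_spec_gen_0} and one verifies that, as $\tau_{spec}>0$ grows, the two branch-point zeros $[0]$ and $[1/2]$ persist in the interior of $\bar\Sigma^{0}$ while the remaining boundary zero of the degenerate homogeneous picture migrates to $[1/2+\tau_{spec}/2]$, which lies over $\lambda=1/r>1$ and hence outside $\bar\Sigma^{0}$; the continuity of $\chi$ and the compactness of $\bar\Sigma^{0}$ then preclude any new zero from entering through $C^{\pm}$.
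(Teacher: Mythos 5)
There is a genuine gap, and it sits exactly where the real difficulty of the lemma lies. Your counting step hinges on the claim that the image of $\bar\Sigma^0$ under the lift $G$ fits into a horizontal strip thinner than one imaginary period of $Jac(T^2)$, so that the map cannot wrap in the imaginary direction. This is false: with the paper's normalization the lattice for $G$ is $\Z+\tau\Z$ with $\tau=hi$, and by the very definition of $h$ (equation \eqref{tau_tauSpec}) the Sym point $\xi_1\in C^+$ satisfies $\Im G(\xi_1)=h=\Im\tau$, while by oddness $\Im G(-\xi_1)=-h$. Both points lie on the boundary circles $C^\pm\subset\bar\Sigma^0$, so the imaginary extent of $G(\bar\Sigma^0)$ is at least $2\,\Im\tau$, i.e.\ two full imaginary periods. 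The winding/degree argument therefore cannot be closed as proposed, and the problem is concentrated precisely on the boundary $\lambda^{-1}(S^1)$ where a ``degree'' of a map from a cylinder is not stable. Your fallback route (deformation from the homogeneous limit $\tau_{spec}\to0$) reproduces the first half of the paper's proof, but ``continuity and compactness preclude any new zero from entering through $C^\pm$'' is not an argument: continuity only tells you that a new zero must first appear \emph{on} $\lambda^{-1}(S^1)$; it does not forbid this from happening at some finite $\tau_{spec}$. Excluding that possibility is the entire content of the lemma.

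The paper closes this gap with a completely different tool: if a new zero $\xi_{new}$ appeared over $\lambda_{new}\in S^1$, the corresponding connection would have monodromy $-\Id$ along both generators; combining the parallel sections at $\xi_{new}$, at the branch point $[\tfrac12]$, and at the two Sym points on a suitable double cover of $T^2$ produces a quaternionic $4$-dimensional space of holomorphic sections, and the Pl\"ucker formula of \cite{FLPP} then forces the Willmore energy to be at least $8\pi$ --- contradicting the monotonicity result of \cite{KSS2,KSS}, which keeps the Willmore energy of the $2$-lobed Delaunay family strictly below $8\pi$. Nothing in your proposal substitutes for this quantitative input. (Separately, and less importantly: your simplicity claim for the zeros at $[0]$ and $[\tfrac12]$ rests on ``a codimension-one condition does not hold along the family,'' but the family is itself one-dimensional in $\tau_{spec}$, so that condition could a priori hold at isolated parameters and would need an actual verification; fortunately simplicity is not needed for the set-theoretic statement of the lemma.)
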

\begin{proof}
We first show the statement for $\tau_{spec}\to0,$
where the corresponding 2-lobed Delaunay tori  converge to the homogeneous torus with $R= 2.$  
For homogeneous tori the spectral curve $\Sigma \cong \C P^1$ is given by $\xi^2= \lambda$ and the spectral datum $\chi$ with $R=2$ has the property that
\[\{\xi\in\C \mid \xi\bar \xi\leq 1 \text{ and } \chi(\xi)=0\in \mathrm{Jac}(T^2)\}= \lambda^{-1}(\{0, 1\}).\]
For the 2-lobed Delaunay tori, we have $\chi ([\frac{1}{2}])= 0 \in \mathrm{Jac}(T^2),$ and 
\[\lambda([\tfrac{1}{2}])=r\to 1\,\, \text{ and } \,\,\, \lambda([\tfrac{1+\tau_{spec}}{2}])=\tfrac{1}{r}\to1
\text{ as }\tau_{spec} \to 0.\] 
Because the family of associated holomorphic structures $\dbar^\lambda$ of the Delaunay tori
converge, as $\tau_{spec} \to 0$, uniformly against the family of  associated holomorphic structures of the homogeneous torus with $R= 2$
on the set $\{\lambda\in\C\mid\lambda\bar\lambda\leq1\}$,
the statement holds in the limit.\\

The full statement follows from continuity of the spectral data within the Whitham deformation together with the fact that for 2-lobed Delaunay tori there is
no other point $\lambda_{new}\in S^1$  appearing on the unit circle such that
 \[\chi(\xi_{new})=0\in \mathrm{Jac}(T^2)\]
 for $\xi_{new}\in\lambda^{-1}(\lambda_{new}):$ 
 
 By Proposition 2.1 in \cite{KSS2} together with the fact that the 2-lobed Delaunay tori are not minimal Theorem 4.5 in \cite{KSS}, the Willmore energy $\int_{T^2} (H^2 + 1) dA$ of the 2-lobed Delaunay tori is monotonically increasing in $\tau_{spec}$ and converges to $8 \pi$ for $\tau_{spec} \to \infty,$ where the surfaces converge to a branched double cover of a geodesic sphere (\cite[Lemma 4.4]{KSS}). If there were
a point $\xi_{new}\in\lambda^{-1}(S^1)$, the monodromy of the corresponding flat $\SL(2,\C)$ connection
would be $-\Id$ along both generators $1$ and $\tau$ of the first fundamental group $\pi_1(T^2).$ At the ramification point $\xi_b=[\tfrac{1}{2}]\in T^2$
the trace of monodromy of the corresponding $\SL(2,\C)$ connection is also $-2$ along both generators
$1,\tau.$ An adaption of the proof of Theorem 6.7 in \cite{FLPP}  gives a lower bound on the Willmore energy depending on the dimension of the space of 
monodromy-free holomorphic sections: On the two fold covering of $T^2$ determined by the lattice $(1-\tau)\Z+(1+\tau)\Z$ the parallel sections with respect to 
$\nabla^{\xi_b}$ and $ \nabla^{\xi_{new}}$ and
the parallel sections with respect to the trivial connections at the two Sym points yield
 a subspace  of the space of quaternionic holomorphic sections with respect to
the induced quaternionic holomorphic structure whose quaternionic dimension is at least 4. The Pl\"ucker formula  in
\cite[Equation (89)]{FLPP} gives the Willmore estimate
\[W(f)\geq\tfrac{\pi}{2}4^2=8\pi\]
which is a contradiction.
\end{proof}

\begin{Lem}\label{non_degenerate2}
Let $(\Sigma,\chi)$ be the spectral data of a 2-lobed Delaunay torus, where $\chi$ is given by \eqref{weierstrass_zeta_L} and \eqref{2lobe_spec_periods2} with $\chi([0])=0.$ Consider the preimage $\xi_1\in C^+\subset\Sigma$
of a Sym  point $\lambda_1\in S^1$ given by $\xi_1=s_0+\frac{1}{4}\tau_{spec}\in \C.$ Then the vectors
\[\frac{\partial \chi }{\partial \tau_{spec}}{\mid _{\xi_1}}\in\overline{H^0(T^2,K)},\, \,\,\,\frac{\partial \chi}{\partial \xi}{\mid_{\xi_1}}\in\overline{H^0(T^2,K)}\]
are $\R$-linear independent, where $\frac{\partial \chi}{\partial \tau_{spec}}{\mid_{ \xi_1}}$ is the
derivative of $\chi$ 
with respect to $\tau_{spec} \in i \R^{>0}$ at $s_0+\tfrac{1}{4}\tau_{spec}$. 
\end{Lem}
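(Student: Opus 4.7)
The plan is to explicitly compute $\tfrac{\partial\chi}{\partial\xi}|_{\xi_1}$, to use the Sym-point identity to express $\tfrac{\partial\chi}{\partial\tau_{spec}}|_{\xi_1}$ in terms of $\tfrac{\partial\chi}{\partial\xi}|_{\xi_1}$ and simpler real quantities, and finally to refute the resulting $\R$-linear dependence equation by a bifurcation-limit analysis at $\tau_{spec}\to 0^+$.

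For the first step, set $\tau_{spec}=it$ and $\tau=ih$ with $t,h>0$. The key identity $\xi_1-\tfrac{\tau_{spec}}{2}=s_0-\tfrac{it}{4}=\overline{\xi_1}$, combined with conjugation-invariance of the rectangular spectral lattice $\Z+it\Z$ (which gives $\wp(\overline{\xi_1})=\overline{\wp(\xi_1)}$) and the reality of $a,b$ and of $\tfrac{\pi i}{2\tau}=\tfrac{\pi}{2h}$, yields
\[\frac{\partial\chi}{\partial\xi}\bigg|_{\xi_1}=\tfrac{\pi}{2h}\,\overline{a\wp(\xi_1)+b}\,d\bar w,\]
which has nonzero imaginary part because $\xi_1$ lies strictly off the real locus of $\wp$ for the rectangular lattice.

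For the second step, locally lift the Sym condition $\chi(\xi_1(\tau_{spec}),\tau_{spec})\in\tfrac{\pi i(1+\tau)}{4\tau}d\bar w+\Lambda$ to $\overline{H^0(T^2,K)}$ and differentiate in $\tau_{spec}$:
\[\frac{\partial\chi}{\partial\tau_{spec}}\bigg|_{\xi_1}=-\tfrac{\pi i\,\tau'(\tau_{spec})}{4\tau^2}\,d\bar w-\xi_1'(\tau_{spec})\,\frac{\partial\chi}{\partial\xi}\bigg|_{\xi_1},\]
where the first right-hand term is purely imaginary (as $\tau^2\in\R^{<0}$ and $\tau'=h'(t)\in\R$), and $\xi_1'(\tau_{spec})=\tfrac{1}{4}+\tfrac{ds_0}{d\tau_{spec}}$ has real part $\tfrac{1}{4}$ (since $s_0\in\R$ and $\tau_{spec}\in i\R$). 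Assuming $\tfrac{\partial\chi}{\partial\tau_{spec}}|_{\xi_1}=\rho\,\tfrac{\partial\chi}{\partial\xi}|_{\xi_1}$ for some $\rho\in\R$, dividing by $\tfrac{\partial\chi}{\partial\xi}|_{\xi_1}$ and taking imaginary parts reduces the $\R$-linear dependence to the single scalar equation
\[h'(t)\,(au+b)+2h(t)\,|a\wp(\xi_1)+b|^2\,s_0'(t)=0,\qquad\wp(\xi_1)=u+iv,\]
where $s_0'(t)$ and $h'(t)$ are obtained by implicitly differentiating the defining condition \eqref{definition-s0} and $h(t)=\Im\int_0^{\xi_1}(a\wp+b)d\eta$ in $t$.

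For the third step, refute this scalar identity at the bifurcation $t\to 0^+$, where the 2-lobed Delaunay tori converge to the homogeneous torus of spectral genus zero with $\tau=\sqrt{3}\,i$. In this limit $a\to 0$ and $a\wp(\xi_1)+b\to 1$, so $\tfrac{\partial\chi}{\partial\xi}|_{\xi_1}$ tends to the real value $\tfrac{\pi}{2\sqrt{3}}d\bar w$, while the leading $t$-correction of $\tfrac{\partial\chi}{\partial\tau_{spec}}|_{\xi_1}$, computable via the heat equation for $\wp$ and standard Jacobi-theta expansions of $\wp,\zeta$ and $\eta_3$, acquires a nonzero imaginary part. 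Real-analyticity of the Whitham family in $t$ then propagates the $\R$-linear independence to an open interval of $t>0$. I expect the main obstacle to be the careful bookkeeping of these degenerate expansions together with the implicit dependence $\tau=\tau(t)$, and the verification that no accidental cancellation forces the limiting imaginary part to vanish.
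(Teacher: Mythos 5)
Your strategy---turning the claimed $\R$-linear dependence into a single real scalar identity via the Sym-point relation and then refuting that identity---is genuinely different from the paper's route, but as written it has gaps I would call fatal. The decisive step is never carried out: everything hinges on showing that your scalar identity $h'(au+b)+2h\,|a\wp(\xi_1)+b|^2\,s_0'=0$ fails in the limit $t\to 0^+$, and you defer exactly this to ``theta expansions'' while conceding you have not excluded ``accidental cancellation''. Moreover $h'(t)$ and $s_0'(t)$ are derivatives along the $2$-lobed family evaluated at the bifurcation point off the homogeneous family, where the spectral curve degenerates, so it is not even clear these limits exist or are computable by naive expansions. Second, even granting the limit, ``real-analyticity propagates the independence to an open interval of $t>0$'' only excludes zeros of a real-analytic function near $t=0$; the lemma is stated for an arbitrary $2$-lobed Delaunay torus and is invoked in Theorem \ref{flow_Delaunay} at every $\tau_{spec}\in i\R^{>0}$, and a real-analytic function that is nonzero near $0$ can perfectly well vanish at larger $t$. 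Third, differentiating the Sym-point identity presupposes that $s_0(\tau_{spec})$ is differentiable; by the implicit function theorem applied to \eqref{definition-s0} this requires precisely $\Re(a\wp(\xi_1-\tfrac{\tau_{spec}}{2})+b)\neq0$, which is the nontrivial content of the paper's proof---so you are close to assuming what must be shown. Your step 1 instead establishes $\Im(a\wp(\xi_1-\tfrac{\tau_{spec}}{2})+b)\neq0$ (correct, since $\xi_1$ lies off the real locus of $\wp$, and useful for dividing), but that is a different and much easier fact than the one the argument actually turns on.

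For comparison, the paper quotes $\partial\tau/\partial\tau_{spec}\neq0$ from the reference on constrained Willmore tori and reduces the lemma to $\Re(a\wp(\xi_1-\tfrac{\tau_{spec}}{2})+b)\neq0$; it then proves this for \emph{all} $\tau_{spec}$ by showing that $s_0$ is neither a local maximum nor an inflection point of $g(s)=\Re\int_0^{s+\tau_{spec}/4}(a\wp(\xi-\tfrac{\tau_{spec}}{2})+b)\,d\xi$, using that $s\mapsto a\wp(s-\tfrac{\tau_{spec}}{4})+b$ traces a circle meeting the real axis only at $s\in\tfrac12\Z$, together with a transition argument converting a hypothetical local maximum at some $\tau_{spec}$ into an inflection point at a critical $\tau_{spec_0}$. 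That global structure, valid across the whole family rather than only near the bifurcation, is what your proposal is missing; to salvage your approach you would need to (i) actually perform and control the degenerate expansion, and (ii) replace the local propagation by an argument covering all $\tau_{spec}$. A minor further point: the lift of the Sym condition contains a lattice element of $\Lambda(\tau)$ that also varies with $\tau_{spec}$; its derivative happens to have the same (purely imaginary, in your convention) form as the term you kept, so the shape of your scalar equation survives, but the constant changes and the term should not be dropped silently.
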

\begin{proof}
By  Theorem 5 and Section 3.5 in \cite{LHe} we obtain 
$\tfrac{\del \tau}{\del \tau_{spec}} \neq 0$ along the family of 2-lobed Delaunay tori.
Hence by \eqref{tau_tauSpec}, it is enough to prove that
\[\Re\frac{2\tau}{\pi id\bar w}\frac{\partial \chi}{\partial \xi}{\mid_{\xi_1}}=\Re(a\wp(\xi_1-\tfrac{\tau_{spec}}{2})+b)\neq0\]
for all $\tau_{spec}\in i\R^{>0}$ with $a,b\in\R$ as in \eqref{2lobe_spec_periods2}.
Because  $s_0\in\R^{>0}$ is the smallest number satisfying \eqref{definition-s0} by definition of the Sym point
and because $\int_0^\xi(a\wp(\xi-\tfrac{\tau_{spec}}{2})+b) d\xi$ is imaginary along the imaginary axis, $\Re(a\wp(\xi_1-\tfrac{\tau_{spec}}{2})+b)<0$ is impossible. \\

If \[\Re(a\wp(\xi_1-\tfrac{\tau_{spec}}{2})+b)=0\] then the function \[g: \R \to \R, \quad  g(s) = \int_0^{s + \tfrac{\tau_{spec}}{4}} (a\wp(\xi-\tfrac{\tau_{spec}}{2})+b)d \xi\] would have either a local maximum ($s_0$ as a minimum does not occur, because $s_0$ is the smallest number satisfying \eqref{definition-s0} ) or an inflection point at $s_0.$ \\

If $g(s)$ has an inflection point at $s_0,$ then $g''(s_0) = 0$.  Since $\frac{2\tau}{\pi id\bar w}d\chi = (a\wp(s-\tfrac{\tau_{spec}}{4})+b)=\tilde a\lambda+\tilde b$ for
real $\tilde a,\tilde b\in\R,$ and because $\lambda : \Sigma \to \C P^1$ maps the line $\R+\tfrac{\tau_{spec}}{4}$ onto
the unit circle, the map $\R\ni s\mapsto a\wp(s-\tfrac{\tau_{spec}}{4})+b)$ maps
the real line onto a circle in $\C$ centered on the real axis. Moreover, the circle intersect the $x$-axis exactly
for  $s\in \tfrac{1}{2}\Z$ and these are the only critical points of the real-valued function
$g'(s).$ But we have $0<s_0$ by definition and $s_0 \neq \tfrac{1}{2}$ because the real part of
$\R \ni t\mapsto\frac{2\tau}{\pi id\bar w}\chi(\tfrac{1}{2}+t i)$ is constant and
$\frac{2\tau}{\pi id\bar w}\chi(\tfrac{1}{2})=1$ by \eqref{2lobe_spec_periods}, which leads to a contradiction. Thus for every $\tau_{spec}$ the Sym point satisfies $0<s_0 <\tfrac{1}{2}$ and $s_0$ is not a inflection point of $g.$\\

If $g$ has a local maximum at $s_0,$ then by \eqref{2lobe_spec_periods}
there must be another  $\tilde s_0\in]s_0,\tfrac{1}{2}[$ which satisfies \eqref{definition-s0}. This is not the case as $\tau_{spec}\to0$ or $\tau_{spec}\to\infty$; see \cite{WhWa}.
Further, since all involved functions and the constants $a,b$ depend real analytically on $\tau_{spec},$ we get
a conformal type $\tau_{spec_0}$ for the spectral curve such that for
$\Im(\tau_{spec})<\Im(\tau_{spec_0})$ there is only one point $s_0\in]0,\tfrac{1}{2}[$ satisfying \eqref{definition-s0}
and for $\Im(\tau_{spec_0})+\epsilon>\Im(\tau_{spec})>\Im(\tau_{spec_0})$ (for some small $\epsilon>0$) there are at least two points $s_0\neq\tilde s_0\in]0,\tfrac{1}{2}[$ satisfying
\eqref{definition-s0}. Hence, for $\tau_{spec_0}$ we get that the corresponding $s_0$ is an inflection point of $g_{\tau_{spec_0}}$ and we get also a contradiction in this case. 
\end{proof}

\section{Spectral curve theory}\label{arbitrary_genus}

In the previous section we have shown how CMC tori can be described by a holomorphic map
from the associated spectral curve into the Jacobian of the CMC torus. The main ingredient was the
reduction of (a generic) flat $\SL(2,\C)$ connection over the torus to a flat line bundle connection.
Now we want to explain how to generalize the torus spectral curve theory to the case of  higher genus CMC surfaces with certain discrete symmetries. The theory for the particular case of Lawson symmetric CMC surfaces of genus $2$ was developed in \cite{He3}.
In Section \ref{Abel} we start with a $\lambda$-independent discussion of certain flat  $\SL(2,\C)$ connections in terms of flat line bundles, and in \ref{thespecdata} we consider 
the $\lambda$-dependent version thereof.

\subsection{Abelianization}\label{Abel}
As in Section \ref{CMC_tori} we consider 
the lattice $\Gamma=2\Z+2\tau\Z$ and the corresponding
Riemann surface 
$T^2 =\C/\Gamma$ of genus 1. Let $\sigma \colon T^2 \to \C P^1$ be the elliptic involution and let \begin{equation}\label{z}z\colon T^2\to\CP^1\end{equation}
 be the induced double covering with its four ramification points  by \begin{equation}\label{rampe}P_1=[0],\,P_2=[1],\,P_3=[1+\tau],\,P_4=[\tau]\in T^2.\end{equation}
 By applying a Moebius transformation 
 we assume that the branch points $p_k$ of $z$
are given by
 \begin{equation}\label{brampe}
 p_1=z([0])=0,\, p_2=z([1])=1,\, p_3=z([1+\tau])=\infty,\, p_4=z([\tau])=m\in \CP^1\end{equation}
 for some $m\in\C\setminus\{0,1\}.$
 For $\rho\in]-\tfrac{1}{2},\tfrac{1}{2}[$  consider the moduli space \[\mathcal A^2_\rho(\CP^1\setminus\{p_1,\dots,p_4\})\] of flat $\SL(2,\C)$ connections on the 4-punctured sphere $\CP^1\setminus\{p_1,\dots,p_4\}$ with the additional property that the local monodromies $M_k$  around every puncture $p_k$ are contained in the conjugacy class of
\begin{equation}\label{local_monodromies}
\dvector{\exp{(2\pi i\frac{2\rho+1}{4})} & 0 \\ 0 & \exp{(-2\pi i\frac{2\rho+1}{4})}}.
\end{equation}

We have the following important relation between flat line bundles on the torus and $\mathcal A^2_\rho(\CP^1\setminus\{p_1,\dots,p_4\}):$
\begin{The}[\cite{HeHe}]\label{2:1} Let  $\rho\in]-\tfrac{1}{2},\tfrac{1}{2}[$ and let $\pi^1\colon \mathcal A^1(T^2)\to \mathrm{Jac}(T^2)$ be the projection from the moduli space of flat line bundle connections to the moduli space of holomorphic structures on a trivial line bundle.
 Then  
 there is a 2:1 correspondence $\Pi$
 between the open and dense set  $\mathcal A^1(T^2)\setminus(\pi^1)^{-1}(\Lambda)$ and an open and dense subset of $\mathcal A^2_\rho(\CP^1\setminus\{p_1,\dots,p_4\})$ of flat $\SL(2,\C)$ with local monodromies lying in the conjugacy class given by \eqref{local_monodromies}. \\
 
 This 2:1 correspondence extends to the $\pi^1$-preimage  of $\Lambda\equiv \frac{\pi i}{\tau-\bar\tau}\Z+\frac{\pi i\tau}{\tau-\bar\tau}\Z$ 
 in the following sense: 
 Let $\chi\mapsto\alpha(\chi)$ be a holomorphic map on $U\subset \C\setminus(\frac{\pi i}{\tau-\bar\tau}\Z+\frac{\pi i\tau}{\tau-\bar\tau}\Z)$ and $\gamma\in \bar U\cap \frac{\pi i}{\tau-\bar\tau}\Z+\frac{\pi i\tau}{\tau-\bar\tau}\Z.$ Then the map $\chi \mapsto \Pi ([\nabla^{\chi, \alpha(\chi)}])$ extends to $\chi=\gamma$ 
 if and only if $\alpha$ expands around $\chi=\gamma$ as
\begin{equation}\label{a_spin_expansion}
\alpha(\chi)\sim_\gamma\pm\frac{4\pi i}{\tau-\bar\tau}\frac{\rho}{\chi-\gamma}+\bar\gamma+\,\text{ higher order terms in } \chi.\end{equation}
 \end{The}
 A proof of theorem can be found in \cite{HeHe}, but we shortly recall the construction of $\Pi$ below; see \eqref{Pi}.
  The reader should be aware of the differing normalizations
here and in \cite{HeHe} and \cite{He3}. 
It is also important to mention that
the abelianization is related to (and in fact motivated by) the Hitchin system \cite{Hi1}. These two theories are not equivalent since we use
the complex structure $J$ on the moduli space of flat connections induced by the complex group $\SL(2,\C)$ whereas
the Hitchin system is making use of the complex structure $I$.\\

In order to describe the map $\Pi$ in Theorem \ref{2:1}
we need the lattice $\tfrac{1}{2}\Gamma=\Z+\tau\Z$ and
the (shifted) theta-function $\vartheta$ on $\C/ \tfrac{1}{2}\Gamma,$  i.e.,
the unique (up to a multiplicative constant) entire function $\vartheta\colon\C\to\C$ satisfying $\vartheta(0) = 0$ and
\begin{equation}\label{theta-function}
\vartheta(w+1) = \vartheta (w),\,\,  \vartheta(w+ \tau) = - \vartheta (w)e^{-2\pi i w}\end{equation}
for all $w\in\C.$
For every $x  \notin\tfrac{1}{2}\Gamma$ we define the function 
\begin{equation}\label{beta-function}
\beta_{x}(w) = \frac{\vartheta(w-  x)}{\vartheta(w)}e^{\tfrac{2\pi i }{\bar\tau-\tau} x(w-\bar w)}.\end{equation}
For $x \in \C\setminus\tfrac{1}{2}\Gamma$ the function $\beta_x$ is doubly periodic in $w$ with respect to the lattice  $\tfrac{1}{2}\Gamma$
and satisfies 
\[\left(\dbar-\frac{2\pi i}{\tau-\bar\tau}xd\bar w \right)\beta_{x}=0.\] Thus $\beta_x$ is a meromorphic section of the trivial bundle $\underline\C\to\C/\tfrac{1}{2}\Gamma$ equipped with the holomorphic structure $\dbar-\frac{2\pi i}{\tau-\bar\tau}xd\bar w$  and has a simple zero at $w=x$ and a first order pole  at $w = 0.$  
We can view $\beta_x$ on the bigger torus $T^2 = \C/ \Gamma\to \C/\tfrac{1}{2}\Gamma$  as a meromorphic section with respect to the holomorphic structure $\dbar-\frac{2\pi i}{\tau-\bar\tau}xd\bar w$. This section has four simple  zeros and four simple poles. \\

For a given flat line bundle connection
\begin{equation}\label{linebundle_connection1form}
d^{\chi,\alpha}=d+\alpha dw-\chi d\bar w
\end{equation}
with $\chi\in\C\setminus(\frac{\pi i}{\tau-\bar\tau}\Z+\frac{\pi i\tau}{\tau-\bar\tau}\Z)$,  $x=\frac{\tau-\bar\tau}{2\pi i}\chi,$  
and $\alpha\in\C$ we consider the flat connection $\hat\nabla^{\chi,\alpha}$
 on the trivial rank $2$ bundle
$\C^2\times T^2\to T^2$: 
 \begin{equation}\label{connection1form}
\hat\nabla^{\chi,\alpha}=d+ \dvector{-\chi d\bar w + \alpha dw & \rho\frac{\vartheta'(0)}{\vartheta(-2x) }\beta_{2x}(w) dw \\ \rho\frac{\vartheta'(0)}{\vartheta(2x)}\beta_{-2x}(w) dw & \chi d\bar w -\alpha  d w  }.
 \end{equation}
Note that the off-diagonal of \eqref{connection1form} only depends on the holomorphic structure $\dbar-\chi d\bar w$ and is independent of $\alpha.$\\

In \cite{HeHe} it is shown that  $\hat\nabla^{\chi,\alpha}$ is  gauge equivalent (via a two-valued gauge transformation with singularities at $P_1,\dots,P_4$) to a flat $\SL(2,\C)$ connection  which is given by the pullback of a representative $\tilde\nabla^{\chi,\alpha}$ of an element of $\mathcal A^2_\rho(\CP^1\setminus\{p_1,\dots,p_4\})$. 
The map $\Pi$ is then given by
\begin{equation}\label{Pi}\Pi([d^{\chi,\alpha}])=[\tilde\nabla^{\chi,\alpha}]\in \mathcal A^2_\rho(\CP^1\setminus\{p_1,\dots,p_4\}.\end{equation}
Replacing the line bundle connection
$d^{\chi,\alpha}$
by its dual connection $d^{-\chi,-\alpha}$ in \eqref{connection1form}
yields the same gauge equivalence class in $\mathcal A^2_\rho(\CP^1\setminus\{p_1,\dots,p_4\}),$ i.e., \[\Pi([d^{-\chi,-\alpha}])=\Pi([d^{\chi,\alpha}])=[\tilde\nabla^{\chi,\alpha}].\]

\begin{Rem}\label{Sign}
Note that the ambiguity of the sign of the residue $\pm\frac{4\pi i \rho}{\tau-\bar\tau}$ in \eqref{a_spin_expansion} is meaningful: For positive residue the corresponding underlying parabolic structure is stable (see \cite{HeHe} for more details). If the residue is negative, the underlying parabolic structure is unstable, which implies that
the corresponding $\SL(2,\C)$ connection on the 4-punctured sphere cannot be unitary. It should be also mentioned that
parabolic stability reduces in the case of rational $\rho$ to the stability of a related holomorphic structure on a certain covering of the torus.

\end{Rem}

We use $(\rho,\chi,\alpha)$ to parametrize  flat connections of the form \eqref{connection1form}
with four simple poles at $P_1,\dots,P_4$ on $T^2$. We think of $\chi$ as a point in $\mathrm{Jac}(T^2)$ and consider the pair $(\chi,\alpha)$ as a point $\mathcal A^1(T^2)$ via
\eqref{linebundle_connection1form}.
By the Mehta-Seshadri Theorem \cite{MSe}  (see also \cite{HeHe} for a treatment  of the Mehta-Seshadri theorem in the setup at hand)
there exist for every $\chi\in \C\setminus(\frac{\pi i}{\tau-\bar\tau}\Z+\frac{\pi i\tau }{\tau-\bar\tau}\Z)$
 a unique $\alpha=\alpha^u_\rho(\chi)\in\C$
such that the monodromy representation of the connection 
given by
\eqref{connection1form} with $(\rho,\chi,\alpha^u_\rho(\chi))$ is unitarizable.
In fact,  for every $\rho\in]-\tfrac{1}{2},\tfrac{1}{2}]$
this map $\chi\mapsto \alpha^u_\rho(\chi)$ induces
a real analytic section of the affine holomorphic bundle $\mathcal A^{1}(T^2)\to \mathrm{Jac}(T^2)$ which we  denote by 

\begin{equation}\label{alphaurho}
\alpha^{MS}_\rho\in\Gamma(\mathrm{Jac}(T^2)\setminus\{0\},\mathcal A^1 (T^2))
\end{equation}
given by
$\alpha^{MS}_\rho([\dbar-\chi d\bar w])=[d+\alpha^u_\rho(\chi)dw-\chi d\bar w].$
Note that $\alpha^u_\rho$ satisfies the following functional properties:
\begin{equation}\label{lift_alpha}
\begin{split}
\alpha^u_\rho(\chi+\frac{\pi i}{\tau-\bar\tau}\tau)&=\alpha^u_\rho(\chi)+\frac{\pi i}{\tau-\bar\tau}\bar\tau\\
\alpha^u_\rho(\chi+\frac{\pi i}{\tau-\bar\tau})&=\alpha^u_\rho(\chi)+\frac{\pi i}{\tau-\bar\tau}
\end{split}
\end{equation}
for all $\rho\in]\tfrac{-1}{2},\tfrac{1}{2}[$ and $\chi\in\C\setminus\hat\Lambda.$
\begin{Lem}\label{a^u_symmetries}
Let $T^2=\C/(2\Z+2\tau \Z)$ and $\rho\in]-\tfrac{1}{2},\tfrac{1}{2}[.$ Then the section $\alpha^{MS}_\rho$ in \eqref{alphaurho}
is odd with respect to the involution on $\mathrm{Jac}(T^2)$ induced by mapping a holomorphic structure to its dual structure.
If $\tau\in i\R$ then $\alpha^{MS}_\rho$ is real in the  sense that
\[\alpha^u_\rho(\bar\chi)=\overline{\alpha^u_\rho(\chi)}\]
for all $\chi\in\C\setminus(\frac{\pi i}{\tau-\bar\tau}\Z+\frac{\pi i\tau }{\tau-\bar\tau}\Z).$ 
 \end{Lem}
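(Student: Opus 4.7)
The plan is to deduce both symmetries of $\alpha^{MS}_\rho$ from uniqueness of the Narasimhan--Seshadri section, applied together with two natural involutions on $\mathcal A^1(T^2)$ that each preserve the property ``the abelianized $\SL(2,\C)$-connection is unitarizable''.

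For the oddness, I would observe that the involution $(\chi,\alpha)\mapsto(-\chi,-\alpha)$ on $\mathcal A^1(T^2)$ is precisely dualization of flat line bundle connections, and by Theorem~\ref{2:1} it is the nontrivial deck transformation of the $2\!:\!1$ covering $\mathcal A^1(T^2)\to\mathcal A^2_\rho(\CP^1\setminus\{p_1,\dots,p_4\})$. Hence $(\chi,\alpha)$ and $(-\chi,-\alpha)$ give the same gauge class of $\SL(2,\C)$-connection on the four-punctured sphere, so unitarizability of the abelianized connection is invariant under this involution. Therefore $(-\chi,-\alpha^u_\rho(\chi))$ yields a unitarizable representative over the holomorphic line bundle $\dbar+\chi d\bar w$, and uniqueness in the Metha--Seshadri theorem forces $\alpha^u_\rho(-\chi)=-\alpha^u_\rho(\chi)$, which is exactly the asserted oddness of $\alpha^{MS}_\rho$ with respect to dualizing.

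For the reality statement, when $\tau\in i\R$ the antiholomorphic map $c\colon T^2\to T^2$, $w\mapsto\bar w$, is a well-defined involution (since $\bar\Gamma=\Gamma$) which fixes each Weierstrass point $P_k$ and descends to an antiholomorphic involution of $\CP^1$ fixing each puncture $p_k$. I would consider the operation $\Phi(\nabla)=\overline{c^*\nabla}$, i.e.\ pullback by $c$ followed by entry-wise complex conjugation of the $\mathfrak{sl}(2,\C)$-valued connection $1$-form. Both steps preserve flatness and send $\mathfrak{su}(2)$-valued forms to $\mathfrak{su}(2)$-valued forms; moreover $c$ reverses the orientation of small loops around each $p_k$ while complex conjugation swaps the two eigenvalues $e^{\pm\pi i(2\rho+1)/2}$, and both operations separately preserve the conjugacy class~\eqref{local_monodromies}. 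Thus $\Phi$ maps $\mathcal A^2_\rho$ to itself and preserves unitarizability.

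The key computation is then $\Phi(\tilde\nabla^{\chi,\alpha})=\tilde\nabla^{\bar\chi,\bar\alpha}$. The diagonal part is immediate from $c^*dw=d\bar w$, $\overline{d\bar w}=dw$ and the reality of $\tfrac{2\pi i}{\tau-\bar\tau}$ for $\tau\in i\R$. The off-diagonal part reduces to the identity $\overline{\beta_x(\bar w)}=\beta_{\bar x}(w)$, which I would verify from the symmetry $\overline{\vartheta(\bar w)}=\vartheta(w)$ of the (appropriately normalized) theta function for $\tau\in i\R$, together with $\vartheta'(0)\in\R$. Given this, unitarizability of $\tilde\nabla^{\chi,\alpha^u_\rho(\chi)}$ transfers to $\tilde\nabla^{\bar\chi,\overline{\alpha^u_\rho(\chi)}}$, and uniqueness in Metha--Seshadri yields $\alpha^u_\rho(\bar\chi)=\overline{\alpha^u_\rho(\chi)}$. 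The main obstacle is the off-diagonal theta computation; once it is in place, everything else is a clean application of Narasimhan--Seshadri uniqueness.
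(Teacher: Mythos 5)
Your proposal is correct and follows essentially the same route as the paper, whose proof is just a two-sentence sketch of exactly these two observations: oddness from the fact that dual line bundle connections induce the same $\SL(2,\C)$ connection (plus Mehta--Seshadri uniqueness), and reality from the anti-holomorphic self-equivalence of the rectangular torus. One small caveat on your off-diagonal computation: with the paper's normalization $\vartheta(w+1)=\vartheta(w)$, $\vartheta(w+\tau)=-\vartheta(w)e^{-2\pi i w}$, the function $\overline{\vartheta(\bar w)}$ has automorphy factor $-e^{-2\pi i w}e^{-2\pi i\tau}$ and hence equals $c\,\vartheta(w)e^{-2\pi i w}$ rather than $\vartheta(w)$, so that $\overline{\beta_x(\bar w)}=e^{2\pi i\bar x}\beta_{\bar x}(w)$; these constant discrepancies cancel against those of the prefactors $\vartheta'(0)/\vartheta(\mp 2x)$ (or can be absorbed into a constant diagonal gauge), so your conclusion $\Phi(\tilde\nabla^{\chi,\alpha})=\tilde\nabla^{\bar\chi,\bar\alpha}$ and the resulting reality statement stand.
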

\begin{proof}
Because the dual line bundle connection induces the same $\SL(2,\C)$ connection on the punctured torus,  the
section $\alpha^{MS}_\rho$ is odd for all $\rho\in {]\tfrac{-1}{2},\tfrac{1}{2}[}.$ The second assertion follows from the fact that
the complex conjugated Riemann surface of a rectangular torus is isomorphic to the torus itself.
\end{proof}
\begin{Lem}\label{real_analytic}
Away from the origin $0\in \mathrm{Jac}(T^2)$ the map
\[\alpha^{MS}\colon]\tfrac{-1}{2},\tfrac{1}{2}[\times \mathrm{Jac}(T^2)\setminus\{0\}\to \mathcal A^1(T^2)\]
is real analytic.
\end{Lem}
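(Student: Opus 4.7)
The plan is to deduce the claim from the real analytic implicit function theorem applied to the monodromy map of the family \eqref{connection1form}. The connection 1-form in \eqref{connection1form} depends jointly real analytically on $(\rho,\chi,\alpha)$---indeed, holomorphically in $(\chi,\alpha)$ and affinely in $\rho$, since the off-diagonal coefficients $\beta_{\pm 2x}$ are entire in $x$ and $x=\tfrac{\tau-\bar\tau}{2\pi i}\chi$ is linear in $\chi$. Fixing a system of generators $\gamma_1,\dots,\gamma_N$ of $\pi_1(T^2\setminus\{P_1,\dots,P_4\})$ and computing the parallel transport along each $\gamma_i$ as the endpoint value of the linear ODE with real analytic coefficients and parameters, one obtains that the associated monodromy map
\[
m\colon\,]-\tfrac{1}{2},\tfrac{1}{2}[\,\times(\C\setminus\Lambda)\times\C\to\SL(2,\C)^N,\qquad(\rho,\chi,\alpha)\mapsto(M_1,\dots,M_N),
\]
is jointly real analytic in all its arguments.

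By the Mehta--Seshadri theorem \cite{MSe}, transported into the present parabolic setup via \cite{HeHe}, for each pair $(\rho,\chi)$ with $\chi\notin\Lambda$ the value $\alpha=\alpha^u_\rho(\chi)$ is singled out, modulo the dualization ambiguity in \eqref{lift_alpha}, as the unique $\alpha\in\C$ for which $m(\rho,\chi,\alpha)$ descends to a representation that is $\SL(2,\C)$-conjugate to one with values in $\SU(2)$. The set of unitarizable points in the character variety $\SL(2,\C)^N/\!\!/\SL(2,\C)$ is a real analytic subvariety, locally defined by real analytic equations (e.g.\ reality of traces of words in the generators and boundedness, plus positivity of appropriate moment-map-type invariants). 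Hence the graph of $\alpha^u_\rho$ in parameter space coincides locally with a real analytic subvariety of $]-\tfrac{1}{2},\tfrac{1}{2}[\,\times(\C\setminus\Lambda)\times\C$.

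The core analytic step is to verify the transversality of this subvariety to the $\alpha$-direction at the Mehta--Seshadri point, which is precisely what the real analytic implicit function theorem requires. This should follow from the K\"ahler geometry of $\mathcal A^2_\rho(\CP^1\setminus\{p_1,\dots,p_4\})$: the affine bundle \eqref{affine_bundle} is holomorphic in the complex structure coming from $\SL(2,\C)$, so $\partial_\alpha$ is a complex tangent vector in its fibre, while the unitarizable locus is a totally real submanifold (Lagrangian for the Goldman symplectic form) complementary to the imaginary $\alpha$-direction. Equivalently, the Narasimhan--Seshadri/Mehta--Seshadri section arises as the unique zero of a moment map whose derivative along the complexified gauge orbit is non-degenerate at a smooth (i.e.\ irreducible) point, which holds on all of $Jac(T^2)\setminus\{0\}$. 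Granted this transversality, the real analytic implicit function theorem gives real analyticity of $\alpha^u_\rho(\chi)$ in $(\rho,\chi)$, and hence of $\alpha^{MS}$, on the whole of $]-\tfrac{1}{2},\tfrac{1}{2}[\,\times(Jac(T^2)\setminus\{0\})$. The main obstacle I anticipate is making this transversality fully rigorous: the K\"ahler-quotient viewpoint is conceptually clean, whereas a bare-hands argument would require an explicit infinitesimal analysis of how the $\alpha\,dw$ perturbation in \eqref{connection1form} displaces the monodromies off the $\SU(2)$-conjugacy class, using the linearity of this term in the diagonal of the connection.
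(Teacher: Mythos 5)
Your skeleton --- real analyticity of the monodromy in $(\rho,\chi,\alpha)$, Mehta--Seshadri uniqueness of $\alpha^u_\rho(\chi)$, a real analytic description of the unitarizable locus, and then the real analytic implicit function theorem --- matches the paper's proof. The divergence is in how the unitarizable locus is described, and this is exactly where your proposal has its (self-acknowledged) gap. You pass to the full character variety of the four-punctured sphere and argue that the unitarizable locus is a real analytic subvariety, with transversality to the $\alpha$-direction to be extracted from the K\"ahler-quotient/moment-map picture. As stated this does not close: non-degeneracy of the moment map along the complexified gauge orbit at an irreducible point controls the gauge directions, not the specific two-real-dimensional slice $\{(\chi,\alpha):\alpha\in\C\}$ sitting inside the de Rham moduli space via \eqref{connection1form}; and uniqueness of the intersection point alone does not give analytic dependence (the real analytic equation $x^3=t$ has a unique solution for every $t$ which fails to be analytic at $t=0$). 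So the ``main obstacle'' you flag is a genuine missing step, not merely a technicality.

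The paper's proof takes a more elementary and more concrete route precisely to make this step tractable: connections of the form \eqref{connection1form} are pullbacks of connections on a \emph{once-punctured torus} (as in \cite{He3}), and for such a connection unitarizability is characterized by the traces of two independent global monodromies lying in $[-2,2]\subset\R$ (necessary), respectively $]-2,2[$ (sufficient). This replaces your abstract ``real analytic subvariety of the character variety'' by the zero set of two explicit real analytic scalar functions of $(\rho,\chi,\alpha)$ --- the imaginary parts of two traces --- together with open conditions, on which the implicit function theorem is applied directly; the Mehta--Seshadri theorem supplies uniqueness of the solution $\alpha=\alpha^u_\rho(\chi)$ in the fibre. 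What your approach would buy, if the Lagrangian/totally-real transversality were proved, is a statement independent of the once-punctured-torus trick and hence potentially valid for more general parabolic moduli; what the paper's approach buys is a two-equations-in-two-real-unknowns reduction where the analytic input is just that parallel transport depends analytically on parameters. To repair your argument along the paper's lines, you should verify that the two holomorphic functions $\alpha\mapsto\Tr M_1$, $\alpha\mapsto\Tr M_2$ have $\R$-linearly independent derivatives at the Mehta--Seshadri point, which is the concrete form of the transversality you left open.
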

\begin{proof}
As indicated in (13) and Remark 7 in \cite{He3}, a connection of the form \eqref{connection1form} is the pullback of a connection on a one-punctured torus.
A necessary (respectively sufficient) condition that such a connection is unitary with respect to a suitable hermitian metric is 
that the traces of two independent global monodromies lie the interval $[-2,2] \subset \R$ (respectively in $]-2,2[ \subset \R$).
The statement thus follows from the fact that
the monodromy depends analytically on $(\rho,\chi,\alpha).$ 
\end{proof}

\begin{Rem}\label{Rem:Z3_versus_Z2}
It was shown in \cite{He3} that for $\rho=\tfrac{1}{6}$ the moduli space $\mathcal A^2_\rho(\CP^1\setminus\{p_1,\dots,p_4\})$ is equivalent to the moduli space of
Lawson symmetric flat $\SL(2,\C)$ connections on a Lawson symmetric  Riemann surface of genus $2.$ A similar interpretation is true  for certain discrete values of $\rho;$
 see also Section \ref{Rational_weights} below. By changing $\rho$ we are therefore  continuously deforming (certain subspaces of) the moduli space of flat connections on varying Riemann surfaces of different genera.
\end{Rem}

\subsection{The spectral data}\label{thespecdata}

The spectral curve theory (as  described below)  unifies the spectral curve theory for CMC tori
(see Section \ref{CMC_tori}) and the spectral curve theory for Lawson symmetric CMC surfaces of genus $2$
as developed in \cite{He3}. The main idea is to parametrize $\lambda$-families of (gauge equivalence classes of)
$\SL(2,\C)$ connections in terms of families of (gauge equivalence classes of) line bundle connections. A slight drawback
of this approach is that the line bundle connections can only be parametrized in terms of a spectral curve, i.e., a double covering $\lambda\colon \Sigma\to \C$ of the spectral plane. In fact, in order to obtain conformally parametrized CMC surfaces,
the spectral curve branches over $\lambda=0$ and possibly other discrete spectral values $\lambda_1,\lambda_2,\dots,\in\C.$ 
The spectral curve $\Sigma$ is equipped with an odd holomorphic map $\chi\colon\Sigma\to \mathrm{Jac}(T^2)$  (with respect to the hyperelliptic involution) and with a corresponding
 odd meromorphic map $\alpha$ such that $$d^{\alpha, \chi}  = d + \alpha dw - \chi d\bar w $$ is a lift of $\chi$
into the affine moduli space
$\mathcal A^1(T^2)$ of flat line bundle connections on $T^2.$ 
Note that the maps $\chi$ and $d^{\alpha, \chi}$ must be odd because the corresponding  family of flat $\SL(2,\C)$ connections is well-defined in terms of $\lambda.$ Moreover, $d^{\alpha, \chi}$ must have a first order pole in the affine holomorphic bundle $\mathcal A^1(T^2)\to \mathrm{Jac}(T^2)$ at $\lambda^{-1}(0)$ as a consequence of \eqref{associated_family}.
The spectral data $(\Sigma,\chi,d^{\alpha, \chi})$ of a family $\lambda\mapsto[\nabla^\lambda]$   give rise to a
commuting diagram
\vspace{0.25cm}

\begin{equation}\label{diagramm}
 \xymatrix{
        &   &       \mathcal A^1(T^2) \ar[d]^{''} \ar[ddr]^{\Pi} \\
                \Sigma \ar[rru]^{{d^{\alpha, \chi}}} \ar[rr]_{\chi} \ar[d]^\lambda      &   &  \mathrm{Jac}(T^2)\\
                \C \ar[rrr]^{[ \nabla^{\lambda} ]}  & & &\mathcal A^2
  }
 \end{equation}
 
 where $\Pi$
 is the 2:1 covering map discussed in Theorem \ref{2:1}, and 
 $\mathcal A^2=\mathcal A^2_\rho(\CP^1\setminus\{p_1,\dots,p_4\}).$
 \begin{Rem}
Note that the spectral curves for higher genus CMC surfaces (as discussed in Section \ref{thespecdata} below) are not compact as in the case of CMC tori. Nevertheless, we use the same notations for the spectral curve and the spectral data as in the torus case.
\end{Rem}
In order to define a CMC surface
the spectral data must further satisfy the conditions stated in the following theorem:
\begin{The}\label{slitting_tori}
Let $\rho\in]0;\tfrac{1}{2}[$ and
let $\lambda\colon\Sigma\to D_{1+\epsilon}\subset\C$ be a double covering of the $(1+\epsilon)$ disc branched over finitely many points
$\lambda_0=0,\lambda_3,\dots,\lambda_k.$ Further, let $\chi\colon\Sigma\to \mathrm{Jac}(T^2)$ be an odd map (with respect to the involution $\sigma$ on $\Sigma$ induced by $\lambda$) and $d^{\alpha, \chi}$ be an odd meromorphic lift of $\chi$ to $\mathcal A^1(T^2)$  satisfying the following conditions:
\begin{enumerate}
\item $\chi(\lambda^{-1}(0))=0\in \mathrm{Jac}(T^2);$
\item $\mathcal D$ has a first order pole at $\lambda^{-1}(0);$
\item $\mathcal D$ has a first order pole satisfying the condition induced by \eqref{a_spin_expansion}  at every $\xi_s\in\Sigma\setminus\lambda^{-1}(\{0\})$ with $\chi(\xi_s)=0\in \mathrm{Jac}(T^2)$ and no other singularities;
\item for all $\xi \in \lambda^{-1}(S^1)$: $d^{\alpha, \chi}(\xi)=\alpha^{MS}_\rho(\chi(\xi));$ 
\item  there are four distinct points
$\xi_1,\sigma(\xi_1),\xi_2,\sigma(\xi_2)\in\lambda^{-1}(S^1)\subset\Sigma$
with
$\chi(\xi_{k})\in\frac{\pi i (1+\tau)}{2\tau-2\bar\tau}+\Lambda$ for $k\in\{1,2\}$,
where $\Lambda=\frac{\pi i }{\tau-\bar\tau}\Z+\frac{\pi i \tau}{\tau-\bar\tau}\Z$ is the lattice generating
 $\mathrm{Jac}(T^2).$ 
\end{enumerate}
Then there exists a
conformal CMC immersion
\[f\colon T^2\setminus (l_1\cup l_2)\to S^3\]
whose associated family of flat connections give rise to the spectral data
$(\Sigma,\chi,\mathcal D),$ where
 $l_1=\{[t\tau]\mid  t\in[0,1]\}$ and $l_2=\{[t\tau+1]\mid  t\in[0,1]\}.$
\end{The}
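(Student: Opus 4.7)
The plan is to reduce the theorem to a Sym--Bobenko--type reconstruction, closely modeled on Theorem~\ref{lifting_theorem}. First, the spectral data give rise to the holomorphic family $\lambda\mapsto [\nabla^\lambda]\in \mathcal{A}^2_\rho(\CP^1\setminus\{p_1,\dots,p_4\})$ indicated in the commutative diagram preceding the statement: composing $\mathcal{D}\colon \Sigma\to\mathcal{A}^1(T^2)$ with the 2:1 abelianization map $\tilde\nabla^{\chi,\alpha}$ of Theorem~\ref{2:1} descends to $\C^*=\Sigma/\sigma$ because $\chi$ and $\mathcal{D}$ are both $\sigma$-odd and the abelianization is invariant under dualization of the line bundle connection. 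Via the explicit formula~\eqref{connection1form} this produces a $\C^*$-family of actual flat $\SL(2,\C)$-connections on the trivial $\C^2$-bundle over $T^2$ with simple poles at the branch points $P_1,\dots,P_4$.

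Next I would verify the analogues of the hypotheses of Theorem~\ref{lifting_theorem} for this family. Conditions~(1)--(3) organize the pole behavior of $\mathcal{D}$: at $\lambda=0$, the vanishing of $\chi$ together with the first-order pole of $\mathcal{D}$ yields the required asymptotic $\nabla^\lambda\sim\lambda^{-1}\Psi+\nabla^0+\cdots$ with nilpotent $\Psi$ coming from the off-diagonal entries of~\eqref{connection1form}, while at the other zeros of $\chi$ the expansion~\eqref{a_spin_expansion} is exactly the condition needed by Theorem~\ref{2:1} for $\tilde\nabla^{\chi,\alpha}$ to extend smoothly after a gauge. Condition~(4) is the unitarity hypothesis along $S^1$: by the defining property of the Narasimhan--Seshadri section $\alpha^{MS}_\rho$ together with the real-analyticity from Lemma~\ref{real_analytic}, the associated $\SL(2,\C)$-connection on the 4-punctured sphere is unitarizable for every $\xi\in\lambda^{-1}(S^1)$.

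The closing condition~(5) is the heart of the argument. The half-lattice value $\tfrac{\pi i(1+\tau)}{2(\tau-\bar\tau)}+\Lambda$ distinguishes the flat line bundle connection on $T^2$ whose monodromies along the generators $2$ and $2\tau$ of $\pi_1(T^2)$ are both $-1$. Under the abelianization of Theorem~\ref{2:1}, this corresponds to an $\SL(2,\C)$-connection on $\CP^1\setminus\{p_1,\dots,p_4\}$ whose pullback to the cut torus $T^2\setminus(l_1\cup l_2)$ has trivial global monodromy, so at both $\lambda_1,\lambda_2$ we obtain single-valued parallel frames $F_{\lambda_1},F_{\lambda_2}$ on $T^2\setminus(l_1\cup l_2)$; unitarity of the family along $S^1$ lets them be chosen in $\SU(2)$. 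The Sym--Bobenko formula $f:=F_{\lambda_1}F_{\lambda_2}^{-1}\colon T^2\setminus(l_1\cup l_2)\to\SU(2)=S^3$ then defines the conformal CMC immersion; that $H=i(\lambda_1+\lambda_2)/(\lambda_1-\lambda_2)$ and that its associated family of flat connections is the $\nabla^\lambda$ constructed above is a routine verification following the proof of Theorem~\ref{The1}.

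The principal obstacle lies in the previous paragraph: the local monodromies of~\eqref{connection1form} around the four $P_i$ lie in the conjugacy class~\eqref{local_monodromies} and hence are non-trivial for generic $\rho$, so one must verify that at the specific half-lattice value of $\chi(\xi_{1/2})$ these four local monodromies combine along every loop in $T^2\setminus(l_1\cup l_2)$ into the identity. The topological role of the arcs $l_1$ (joining $P_1$ to $P_4$) and $l_2$ (joining $P_2$ to $P_3$) is precisely to regroup the four branch points into the two pairs whose local monodromy products trivialize at the Sym points; this is essentially the identification of the trivial gauge class in $\mathcal{A}^2_\rho$ with the image of the half-lattice points under the 2:1 correspondence, established in~\cite{HeHe,He3}. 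Once this topological matching is in hand, the rest of the reconstruction is the standard integrable-systems pattern already carried out in the tori section.
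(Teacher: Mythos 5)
Your proposal follows essentially the same route as the paper's proof: use the abelianization of Theorem~\ref{2:1} to turn $(\Sigma,\chi,\mathcal D)$ into a $\lambda$-family of flat $\SL(2,\C)$-connections with the singularity structure, unitarity along $S^1$, and asymptotics required by the lifting/reconstruction machinery (the paper defers this to Theorems~6 and~8 of~\cite{He3}), and then verify triviality of $\nabla^{\lambda_{1,2}}$ on the cut torus at the half-lattice Sym points via the explicit $\vartheta$-function gauge of~\cite{HeHe}. The step you flag as the principal obstacle is exactly the one the paper resolves by that explicit diagonalizing gauge, so the argument is correct and matches the paper's.
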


\begin{proof}
For all $\rho\in]0,\tfrac{1}{2}[$, $\chi\in \mathrm{Jac}(T^2)\setminus\{ \text{Spin bundles} \}$ and  $\alpha\in\C$ the connection $\hat\nabla^{\chi,\alpha}$
given by \eqref{connection1form}   is irreducible. 
By condition (3)  the family of (gauge equivalence classes of)
 $\SL(2,\C)$ connections has no singularities on $D_{1+\epsilon}$ except at $\lambda=0.$
By the proofs of
 Theorem 6 and Theorem 8 in \cite{He3} applied to our situation  we obtain a family of flat $\SL(2,\C)$ connections of the form 
 \[\nabla^\lambda=\lambda^{-1}\Phi+\nabla-\lambda\Phi^*\] on 
 $T^2\setminus \{P_1,\dots,P_4\}$
 (where  the $P_k$ are as in \eqref{rampe})
with the property that $\nabla^{\lambda}$ is unitary with respect
 to a fixed hermitian metric for $\lambda\in S^1$ and such that  $\Phi$ is a complex linear, nilpotent and 
 nowhere vanishing 1-form on  $T^2\setminus \{P_1,\dots,P_4\}.$   \\
 
Because of Remark \ref{gensym} it remains to prove that the restrictions of both connections $\nabla^{\lambda_k},$ $k=1,2$ 
to $T^2\setminus (l_1\cup l_2)\subset T^2\setminus \{P_1,\dots,P_4\}$  
are gauge equivalent to 
the same flat connection which has monodromy taking values in $\{\pm \Id\}.$
 Note that  for any $\chi_0\in\frac{\pi i (1+\tau)}{2\tau-2\bar\tau}+\Lambda$
the (unitarizable) connection $\hat\nabla^{\chi_0,\alpha^u_\rho(\chi_0)}$ 
yields the same gauge equivalence class due to the periodicity properties; compare with \eqref{lift_alpha}.\\

We proceed as follows: We start with a reducible connection on the 4-punctured sphere, 
whose monodromy is easily computable, and show that its pullback is gauge equivalent (by a two-valued gauge transformation, i.e., the gauge is well-defined up to sign) to $\hat\nabla^{\chi_0,\alpha^u_\rho(\chi_0)}$.
Note that the monodromy representations of  two flat connections which are gauge equivalent by a two-valued gauge transformation differ by a representation
with values in $\{\pm\Id\}.$\\

We have identified $T^2=\C/(2\Z+2\tau\Z)$ and we make use of the elliptic function $z\colon T^2\to\CP^1$ as in \eqref{z}.
For $\epsilon=\tfrac{2\rho+1}{4}$ consider the Fuchsian system 
\begin{equation}\label{almost-trivial}d+\epsilon\dvector{1&0\\0 &-1}\frac{dz}{z}+\epsilon\dvector{-1&0\\0 &1}\frac{dz}{z-1}+\epsilon\dvector{-1&0\\0 &1}\frac{dz}{z-m}\end{equation} on the 4-punctured sphere $\CP^1\setminus\{0,1,\infty,m\}.$
The pullback  of this Fuchsian system via the degree 2 map $z$  is a
 flat meromorphic connection $\tilde\nabla$
which has first order poles at the ramification points $P_k$ of $z$. The local monodromies are determined by the eigenvalues  $\pm(\rho+\tfrac{1}{2})$ of the residues of $\tilde\nabla$. We claim that the 
monodromies of  $\tilde\nabla$ along the curves $\gamma_1^\pm: \; t\in[0,2]\mapsto t\pm \tfrac{\tau}{2}$ and $\gamma_2^\pm: \; t\in[0,2]\mapsto t \tau\pm \tfrac{1}{2}$ are all the identity $\Id,$ which then implies that
  the monodromy representation of $\tilde\nabla$ is trivial when  we restrict it  to $T^2\setminus (l_1\cup l_2).$
But the monodromy representation of the (diagonal) Fuchsian system on the 4-punctured sphere can be computed
via the residue theorem, and
 it can be easily related to the monodromy representation of $\tilde\nabla$
on the 4-punctured torus.
For example, the closed planar curves $z(\gamma_1^\pm)$ have
winding number $-1$ around $p_1=0$ and $p_2=1$, and winding number 0 around $p_4=m.$
Therefore,  the monodromy of $\tilde\nabla$ is trivial  along $\gamma_1^\pm.$\\

 \begin{figure}
\centering
\includegraphics[width=0.75\textwidth]{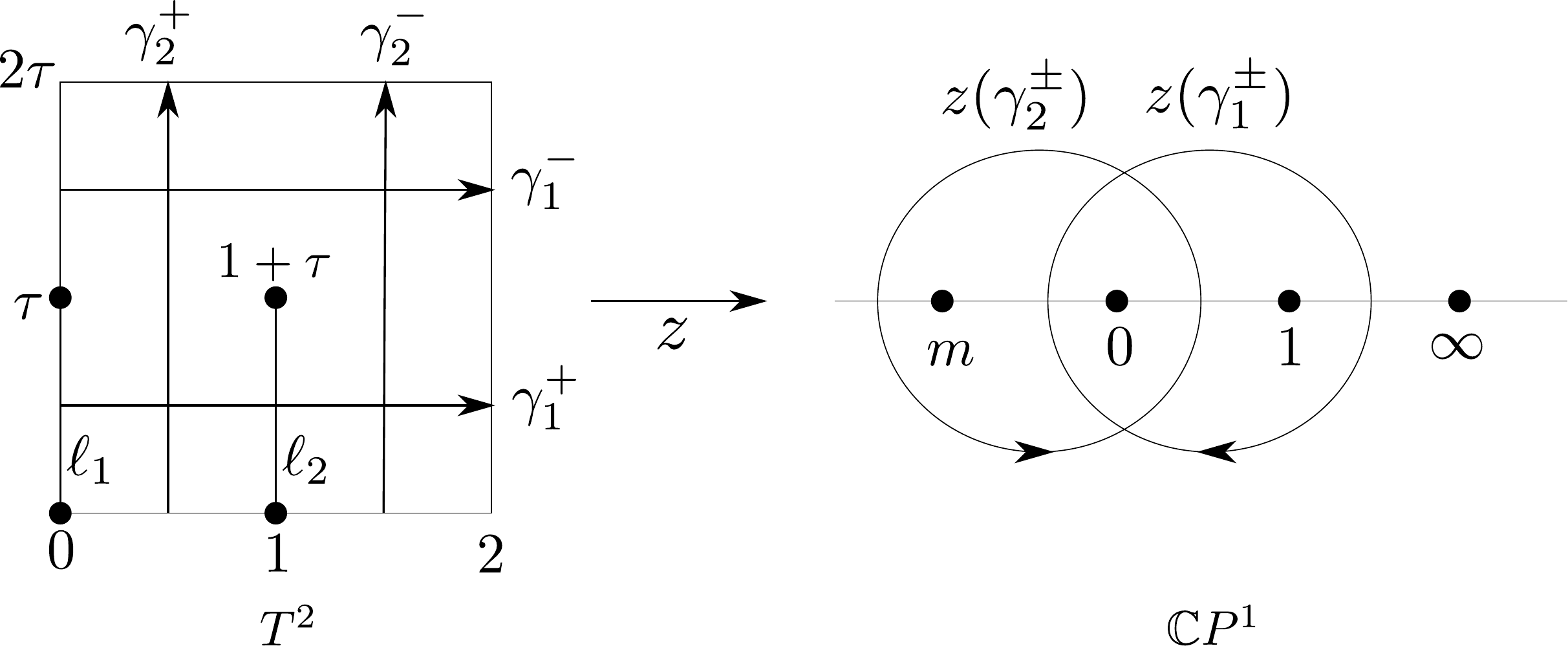}
\caption{
\footnotesize
Four paths  $\gamma^\pm_k$ $k=1,2$ in the cut torus $T^2\setminus (l_1\cup l_2)$ and their images under $z$ in $\CP^1$.
These are used in the proof of Theorem \ref{slitting_tori}.
}
\label{fig:lawsonpath}
\end{figure}

To construct the (two-valued) gauge transformation we consider the following meromorphic 1-form
\[\phi=\dvector{0& 1\\0 & 0}\frac{dz}{z}+\dvector{0& 0\\1 & 0}\frac{dz}{z-1}+\dvector{0& 0\\-1 & 0}\frac{dz}{z-m}\]
and its eigenline bundles $E^\pm$ which are well-defined on $T^2.$ 
The standard computation of eigenlines shows that
each eigenline bundle  corresponds to the divisor
\[D=-[0]-[1+\tau]\equiv -[1]-[\tau]\]
of degree $-2$, and that they are realized as holomorphic line subbundles $E^\pm$ of the trivial $\C^2$-bundle over $T^2$ as the image of the map
\[\dvector{\pm \hat a \\ \hat b}\colon L(-[0]-[1+\tau])\to \C^2,\]
where $\hat a,\hat b\in H^0(T^2, L(-[0]-[1+\tau])^*)$ are determined up to a multiplicative constant 
by their divisors
\[(\hat a)=[1]+[\tau]\;\; \text{ and } \;\;(\hat b)=[0]+[1+\tau].\]
Note that the holomorphic line bundles $L(-[0]-[1+\tau])$ and $L(-2[0])$ differ by the spin bundle
$L([0]-[1+\tau])$ which corresponds to the holomorphic structure
\[\dbar^0-\tfrac{\pi i(1+\tau)}{2\tau-2\bar\tau} d\bar w.\]
Consider the meromorphic frame $s=s_{-2[0]}$ (with double pole at $[0]\in \C/(2\Z+2\tau\Z)=T^2$) of the line bundle $L(-2[0]).$
With respect to the frame $(s\oplus 0,0\oplus s)$ of $L(-2[0])\oplus L(-2[0])$, the gauge which diagonalizes $\phi$ is given 
by
\[g=\dvector{ a&-a\\b&b}\]
where 
\begin{equation}\label{aandb}
a(w)=c \beta_{[\tfrac{1}{2}]}(\tfrac{1}{2}w) \beta_{[\tfrac{\tau}{2}]}(\tfrac{1}{2}w)
\text{ and } b(w)= \beta_{[\tfrac{1+\tau}{2}]}(\tfrac{1}{2}w)\end{equation}  for some constant $c\in\C^*$ and $\beta_x$ as in \eqref{beta-function}.
Write
\[\tilde \nabla=d+\dvector{f&0\\0&-f}dw\]
for the unique odd meromorphic function $f\colon\C/(2\Z+2\tau\Z)\to\C$ with simple poles at $[0]$, $[1]$, $[1+\tau]$ and $[\tau]$
and residues given by $\rho+\tfrac{1}{2},-(\rho+\tfrac{1}{2}),\rho+\tfrac{1}{2},$ and $-(\rho+\tfrac{1}{2})$, respectively.
Then we obtain
\begin{equation}\label{nab1}
\tilde\nabla.g=d+\dvector{0&-f\\-f&0}+\tfrac{1}{2}\dvector{\tfrac{da}{a}+\tfrac{db}{b}& -\tfrac{da}{a}+\tfrac{db}{b}\\ -\tfrac{da}{a}+\tfrac{db}{b}& \tfrac{da}{a}+\tfrac{db}{b}}.\end{equation}
Consider the unique flat meromorphic line bundle connection $\nabla^S$ on $L(-2[0])$ satisfying
\[(\nabla^S\otimes\nabla^S) s_{-[0]-[1]-[1+\tau]-[\tau]}=0\]
for the meromorphic section $s_{-[0]-[1]-[1+\tau]-[\tau]}$ of the bundle $L(-2[0])\otimes L(-2[0])$ with simple poles at $[0]$, $[1]$, $[1+\tau]$ and $[\tau]$.
It satisfies \[\nabla^S s_{-2[0]}=\tfrac{1}{2} \tfrac{d\wp'}{\wp'} s_{-2[0]},\]
where $\wp'$ is the  derivative of the Weierstrass $\wp$-function of $T^2$, which is an odd meromorphic function with zeros at $[1]$, $[1+\tau]$ and $[\tau]$ and a pole of order 3 at $[0].$
Using the $\vartheta$-function in \eqref{theta-function} and denoting
\[\vartheta_0(w)=\vartheta(\tfrac{1}{2}w), \vartheta_1(w)=\vartheta(\tfrac{1}{2}(w-[1])), \vartheta_2(w)=\vartheta(\tfrac{1}{2}(w-[1+\tau])) \text{ and } \vartheta_3(w)=\vartheta(\tfrac{1}{2}(w-[\tau])) \]
we get the identity
\[\frac{d\wp'(w)}{\wp'(w)}=-3\frac{d \vartheta_0}{\vartheta_0}+\frac{d \vartheta_1}{\vartheta_1}+\frac{d \vartheta_2}{\vartheta_2}+\frac{d \vartheta_3}{\vartheta_3}+constant \;dw\]
where the constant-term can be computed explicitly.
Therefore, 
\[\hat\nabla:=(\nabla^S)^*\otimes \tilde\nabla.g\] is a  rank 2 connection on the trivial $\C^2$-bundle
whose diagonal part is smooth. Note that the global monodromies (along $\gamma_1^\pm$ and $\gamma_2^\pm$) of $\nabla^S$  are $-\Id$, and its local monodromies  are  $-\Id$ as well.
 The
$(0,1)$-part of the upper left entry and of the lower right entry of the corresponding connection 1-form (with respect to the standard frame of $\C^2$)
are both given by \[\frac{\pi i (1+\tau)}{2\tau-2\bar\tau}d\bar w\]
as can be computed by looking at \eqref{nab1} using \eqref{aandb} and \eqref{beta-function}. 
Finally, we gauge $\hat\nabla$ by the diagonal gauge transformation
\[h=\dvector{i\exp(\frac{2\pi i}{\tau-\bar\tau}(1+\tau)(w-\bar w))&0\\0&-i}.\]
It remains to prove that for $\chi_0=\frac{\pi i (1+\tau)}{2\tau-2\bar\tau}$ the connections $\hat\nabla^{\chi_0,\alpha^u_\rho(\chi_0)}$   and  $\hat\nabla.h$ are equal. 
By the definition of $h$ it follows that the $(0,1)$-parts of the diagonals  (of the connection 1-forms) of  $\hat\nabla^{\chi_0,\alpha^u_\rho(\chi_0)}$  and  $\hat\nabla.h$ (with respect to the standard frame)
coincide. The off-diagonal parts of \eqref{nab1} and of \eqref{connection1form} coincide as well, as one can deduce 
by using standard identities of elliptic functions and \eqref{aandb}.
Hence, both flat connections give rise to the same parabolic structure (see  \cite{MSe} for more details on parabolic structures). On the other hand, both connections 
$\hat\nabla^{\chi_0,\alpha^u_\rho(\chi_0)}$  and  $\hat\nabla.h$ are unitarizable. Therefore they must coincide
by the uniqueness part of the Mehta-Seshadri theorem \cite{MSe}.  
Alternatively, that the $(1,0)$-parts of the diagonal parts of $\hat\nabla^{\chi_0,\alpha^u_\rho(\chi_0)}$  and  
$\hat\nabla.h$ coincide can be computed directly.
 This finishes the proof of
the theorem.
 \end{proof}

 \begin{Rem}
Instead of cutting the torus $T^2$ along the curves  $l_1$ and $l_2$, cutting the torus along $\tilde l_1=\{[t]\mid  t\in[0;1]\}$ and 
 $\tilde l_2=\{[\tau+t]\mid  t\in[0;1]\}$ yields a different CMC immersion.  However the analytic continuations of both CMC surfaces (after a suitable isometry of the 3-sphere) are the same.
 Moreover, for rational weights, this analytic continuation yields a closed (possibly branched) CMC surface; see Section
 \ref{Rational_weights} below.
 \end{Rem}
 \begin{Rem}\label{bound}
 If the torus $T^2$ has rectangular conformal structure, i.e., $\tau\in i\R,$ and if the Hopf differential is a real multiple of $(dz)^2$, then the four boundary curves of the immersion $f\colon T^2\setminus (l_1\cup l_2)\to S^3$ are curvature lines. The boundary curves on the surface belonging to the same $l_i$ in the domain intersect at an angle of $4\pi \rho.$ For rational $\rho$ this follows from Section
 \ref{Rational_weights}; see also Figure \ref{fig:lawson}.
 \end{Rem}
 
\subsection{Rational weights}\label{Rational_weights}

Let $(\Sigma, \chi, \mathcal D)$ be spectral data satisfying the conditions of Theorem \ref{slitting_tori} for a
 rational $\rho\in ]0, \tfrac{1}{2}[.$ 
 The purpose of this section is to show that the analytic continuation of the surface $f$ given by Theorem \ref{slitting_tori} gives a closed (possibly branched) CMC surface in $S^3$.
  The spectral data give a family of flat connections $\lambda\mapsto\hat\nabla^\lambda$ on the 4-punctured torus  with singularities
determined by $\rho$ around the punctures. 
Pointwise in $\lambda$, the connections $\hat\nabla^\lambda$ give rise to meromorphic connections (with eigenvalues of the residues given by $\pm\tfrac{2\rho+1}{4}$) on the 4-punctured sphere by Theorem \ref{2:1} (see Theorem 3.4 and Theorem 3.5 in \cite{HeHe} for details). \\

Further, the proofs of Theorem 1 and Theorem 2 in \cite{HeHe} (together the the condition that $\chi$ is odd)
show that for small open subsets of $\C^*$ the correspondence works in families;
i.e., for every $\lambda_0\in\C^*$ there is an open neighborhood $U\subset\C^*$ of $\lambda_0$ and a family
of meromorphic connections
$\lambda\in U\mapsto D^{U,\lambda}$ such that $D^{U,\lambda}$ and $\hat\nabla^\lambda$
are related by the 2:1 correspondence of Theorem \ref{2:1} (see also the proof of Lemma \ref{atlambda=0} below). We need the following lemma in order to analyze the situation at $\lambda=0.$
\begin{Lem}\label{atlambda=0}
 There is  an open neighborhood $U$ of $\lambda=0$ and a family $\lambda\in U\setminus\{0\}\mapsto D^\lambda$ of
meromorphic connections  on the 4-punctured sphere with first order pole at $\lambda=0$ (and nilpotent residue $\Psi$)
such that the 2:1 correspondence of Theorem \ref{2:1} relates  $\hat\nabla^\lambda$ and $D^\lambda$ for all $\lambda\in U\setminus\{0\}.$
\end{Lem}
\begin{proof}
We work with Fuchsian systems of the form
\[\nabla^{u,s}=\nabla^u+s\Psi^u\]
where
$\nabla^u$ and $\Psi(u)$ are given by equations (2.3) respectively (2.5) in \cite{HeHe}.
Let
\[\frac{2\rho+1}{4}=\frac{p}{q}\] for coprime integers $p$ and $q$
and set
\begin{equation}\label{concrete_parabolic_structure}
\begin{split}
A_1&=\frac{p}{q}\dvector{1 &0\\ 2 &-1},\, \,\,\,A_2=\frac{p}{q}\dvector{-1  &0 \\-2 & 1},\, \,\,\,
A_m=\frac{p}{q}\dvector{-1 &2 u \\ 0 & 1} ,\, \,\,\,
\end{split}
\end{equation}
and
\begin{equation}\label{concrete_Higgs}
\begin{split}
\Psi_1=\dvector{u&-u \\ u & -u},\, \,\,\,\, \Psi_2&=\dvector{0 &0 \\ 1-u & 0}, \, \,\,\,\, \Psi_m=\dvector{-u &u^2 \\ -1 & u}.
\end{split}
\end{equation}
We define the Fuchsian system
\begin{equation}\label{Fuchsu}
\nabla^u:=d+A_1\frac{dz}{z-1}+A_2\frac{dz}{z}+A_m\frac{dz}{z-m}\end{equation}
and the so-called parabolic Higgs field 
\begin{equation}\label{psi^u}
\Psi=\Psi(u):=\Psi_1\frac{dz}{z-1}+\Psi_2\frac{dz}{z}+\Psi_m\frac{dz}{z-m}.
\end{equation}

The 2:1 correspondence of Theorem \ref{2:1} can be expressed in terms of the connections \eqref{connection1form} and the Fuchsian systems $\nabla^{u,s}$
explicitly (on open dense subsets of the moduli spaces):
because of (3.2) in \cite{HeHe}
$u(\chi)$ is given in terms of $\chi$ as the unique meromorphic function
$u(\chi)$  of degree $2$ on $\mathrm{Jac}(T^2)$ which branches at the half lattice points with branch values $0,1,\infty,m$
(see Equation (3.5) in \cite{HeHe}). In particular, composing $u(\chi)$ with the map $\chi$  on the spectral curve (see Diagram \ref{diagramm}), we obtain
a well-defined   meromorphic function $u=u(\lambda)$ on the spectral plane $\C$ with $u(0)=m$ since the map $\chi$ is odd. Similarly, we obtain a well-defined holomorphic function
$s(\lambda)$ on $U\setminus\{0\}$ (after shrinking $U$ if necessary).
Since 
$\Psi(m)$ is nilpotent, it remains to show that the function $s=s(\lambda)$ determined by the 2:1 correspondence of Theorem \ref{2:1}
has  a first order pole at $\lambda=0.$ This can be done by making use of the computations in the proof of Theorem 2 in \cite{HeHe}: 
for an appropriate odd meromorphic function $a\colon \lambda^{-1}(U)\to \C$ with simple pole at $\lambda^{-1}(0)$ the family of connections  
determined by
\[\xi\in\lambda^{-1}( U)\mapsto \Pi(\hat\nabla^{\chi(\xi),\alpha(\xi)+a(\xi)})\]
via the 2:1 correspondence in Theorem \ref{2:1}
extends to $\lambda^{-1}(0)$, giving a Fuchsian system whose underlying parabolic structure is unstable since $4\tfrac{2\rho+1}{4}>0$ (see $\S$ 2.4 in \cite{HeHe}). Because
adding a holomorphic $u$-dependent family of parabolic Higgs fields $\tilde s(u)\Psi^u$ changes the first and higher order term in the expansion (3.7) in \cite{HeHe} (see the end of the proof of Theorem 3.5), a change of the order -1 term via \[\alpha(\xi)+a(\xi)\rightsquigarrow \alpha(\xi)\]
yields a first order pole of the function $s(\lambda)$ at $\lambda=0$. Hence, the family \[\xi\in\lambda^{-1}( U)\mapsto \hat\nabla^{\chi(\xi),\alpha(\xi)}\]
determines the required family of meromorphic connections
\begin{equation}\label{dlambda}\lambda\mapsto D^\lambda=\nabla^{u(\lambda)}+s(\lambda)\Psi(u(\lambda)).\end{equation}
\end{proof}
\begin{Rem}\label{2:1explicitRem}
In the above proof and in the proof of  Theorem \ref{slitting_tori}
we have respectively obtained a direct relation between connections of the form
 \eqref{connection1form} on the 4-punctured torus and Fuchsian systems on the 4-punctured sphere.
Both cases 
rely on the abelianization procedure developed in $\S3$ of \cite{HeHe}.
\end{Rem}

In order to obtain compact (possibly branched) CMC surfaces, we proceed as follows: we pull back the connections $D^\lambda$ and respectively $D^{U,\lambda}$ to a certain covering $M$ of the 4-punctured sphere,
such that the pullbacks of the connections are gauge equivalent to smooth flat connections. 
As these smooth connections ($D^\lambda$,  $D^{U,\lambda}$ for the various sets $U\subset\C^*$) on $M$ are still gauge-equivalent on the intersections of the corresponding sets, they give rise to a map 
$\hat{\mathcal D}\colon\C^*\to\mathcal A^2(M)$ as in Theorem \ref{lifting_theorem}. It remains to prove that $\hat{\mathcal D}$
satisfies conditions (1), (2) and (3) of Theorem \ref{lifting_theorem}. Condition (1) is satisfied as one can see either by looking at the monodromy representation
of the connections which are pulled back to $M$ (which must be simultaneously unitarizable if the monodromy of the corresponding connection on the 4-punctured sphere is simultaneously unitarizable), or by pulling back and gauging the hermitian metric which is parallel with respect to the corresponding connection on the 4-punctured sphere.
To treat  condition (2) we need the following lemma:

\begin{Lem}\label{atlambda=0again}
 Let $z_0$ be one of the four branch points of $T^2\to\CP^1$. There exists a local holomorphic coordinate $x$ centered at $z_0$ such that with respect to an appropriate (locally defined) holomorphic frame
 the family $D^\lambda$ of connections expands at $\lambda=0$  as
 \begin{equation}\label{DPW_sphere}
D^\lambda\sim d+\dvector{\tfrac{p}{q} &  \lambda^{-1} b(\lambda)  \\ a(\lambda) x & -\tfrac{p}{q}}\tfrac{dx}{x}+\lambda^{-1}\eta+\omega^\lambda,
\end{equation}
where $a(\lambda)$ and $b(\lambda)$ are holomorphic functions in $\lambda$, 
$\eta$ is a holomorphic 1-form whose lower left entry vanishes at $x=0$,
and $\omega^\lambda$ is a holomorphic 1-form in a neighborhood of $x=0$
 which depends holomorphically on $\lambda$ and whose lower left entry vanishes at $x=0.$
 \end{Lem}
\begin{proof}
We use the frame
 which diagonalizes the residue at $z_0$ of $\nabla^m$  given by \eqref{Fuchsu}  for $u=m$ whose upper left entry is the positive eigenvalue $\tfrac{p}{q}.$ Note that this frame
 has the property that the residue of $\Psi(m)$ is a nilpotent $\mathfrak{sl}(2,\C)$ matrix whose only non-zero entry is in upper right.
The  expansion \eqref{DPW_sphere}  is then a consequence of \eqref{dlambda} where $u(0)=m$ and $s$ has a first order pole at $\lambda=0$.
\end{proof}

To treat conditions (2)  and (3) of Theorem \ref{lifting_theorem} in detail we need to specify the covering $M:$
The eigenvalues
of the residues of the meromorphic connections on the 4-punctured sphere
  are $\frac{2\rho+1}{4}=\tfrac{p}{q}\in \Q$ with $p$ and $q$ coprime and $\lambda$-independent.
There are two cases depending on whether 
 $q$ is odd or even. \\
 
 We first consider the case when $q$ is odd, and take $\pi\colon M\to\CP^1$ to be the $q$-fold covering of
 $\CP^1$ totally branched over  $0,$ $1$, $m$ and $\infty$
determined by the compact algebraic curve  of genus $q-1$ corresponding to the affine algebraic equation equation
 \begin{equation}\label{algeq}Y^q=  \frac{Z}{(Z-1)(Z-m)}.\end{equation}
 Note that for the standard coordinate $z$ on $\C\subset\CP^1$ we have $z\circ \pi=Z$ on $M.$
 We will see in the proof of Theorem \ref{branched_CMC} that this equation determines the compact Riemann surface on which the analytic continuation of the CMC surface $f$ (given by Theorem \ref{slitting_tori}) closes in an analogous way to 
 the Riemann extension for germs of holomorphic functions.\\
 
 Using a coordinate $y$ on $M$ satisfying $y^q=x$ (where $x$ is as in Lemma \ref{atlambda=0again}  above) and the expansion \eqref{DPW_sphere},
 the local behavior of  the pullback of $D^\lambda$ is given by 
\begin{equation}\label{DPW_upstairs}
\pi^*D^\lambda\sim d+\dvector{p &  q \lambda^{-1}b(\lambda) \\ q a(\lambda) y^{q} & -p}\tfrac{dy}{y}+\pi^*\eta \lambda^{-1}+\pi^*\omega^\lambda.
\end{equation}
Gauging \eqref{DPW_upstairs} by
 \[g(y)=\dvector{y^{-p} & 0 \\ 0 &y^p}\] yields
 \begin{equation}\label{DPW_upupstairs}
 \begin{split}
 \pi^*D^\lambda.g(y)\sim\; d&+\dvector{0 & q \lambda^{-1} y^{2p-1}\\ q a(0) y^{q-2p-1} &0}dy\\&+g^{-1}(y)\pi^*\eta g(y)\lambda^{-1} +g^{-1}(y)\pi^*\omega^\lambda g(y),
 \end{split}\end{equation}
 where $g^{-1}(y)\pi^*\eta g(y)$  and $g^{-1}(y)\pi^*\omega^\lambda g(y)$ are holomorphic in $y$ because the lower left entries of $\eta$ and $\omega^\lambda$ vanish at $x=0$ due to
 the previous Lemma and because \[2q-1-2p>0\] as $\tfrac{1}{2}>\tfrac{p}{q}$. Moreover the lower left entries of the 1-forms $g^{-1}(y)\pi^*\eta g(y)$  and 
 $g^{-1}(y)\pi^*\omega^\lambda g(y)$ vanish at $y=0.$ 
 These locally defined gauges $g(y)$ glue together to form a globally defined gauge transformation on $M$, gauging the family $\lambda\mapsto D^\lambda$
 to a family of smooth flat connections \[\lambda\mapsto \bar\nabla^\lambda=\lambda^{-1}\Psi+\bar\nabla+\dots\]
on $M$  in a punctured neighborhood of $\lambda=0$ with first order pole at $\lambda=0$ and nilpotent residue $\Psi\in\Omega^{(1,0)}(M;\mathfrak{sl}(2,\C))$. We can do the same with
 the pullbacks of the connections $D^{U,\lambda}$ to obtain a well-defined map
 \[\hat{\mathcal D}\colon \C^*\to \mathcal A^2(M)\]
 satisfying the properties (1) and (2) of Theorem \ref{lifting_theorem}.  By the concrete construction in the proof of Lemma \ref{atlambda=0} 
 we have constructed (respectively chosen) the wrong lift $\lambda\mapsto\bar\nabla^\lambda$ of the map $\hat{\mathcal D}.$ This can be either seen from the
 geometric or from the algebraic viewpoint: algebraically, we would like to have a stable holomorphic structure at $\lambda=0;$ see for example Theorem 3.4 in \cite{He2} for the case $p=1$ 
 and $q=3.$ But the corresponding parabolic structure on the 4-punctured sphere  is unstable since $4\tfrac{2\rho+1}{4}>0$ (see $\S$ 2.4 in \cite{HeHe}).
  Geometrically, we want that the branch order is less than the umbilic order, and we want that the number of
  zeros (counted with multiplicity) of the nilpotent residue at $\lambda=0$ is less then $2g-2$ (see Theorem \ref{lifting_theorem}).
 In order to overcome this issue we do the following: let $L\subset\C^2\times M$ be the holomorphic kernel bundle of  the residue term $\Psi$ of $\lambda\mapsto\bar\nabla^\lambda$
 at $\lambda=0$ (note that this line bundle extends holomorphically through the zeros of $\Psi$).
 Since $\Psi$ is nilpotent and the connections are $\SL(2,\C)$ connections, the holomorphic structures have trivial determinant bundle and 
 we obtain an induced vector bundle homomorphism
 \[\Psi\colon \C^2/L\cong L^*\to K L;\]
 see $\S2$ in\cite{He1} for more details. We choose a topological complement $L^*\subset \C^2\times M$ of $L$ in $\C^2$ and
decompose
\[\C^2=L\oplus L^* \;\;\; \text{ and }\;\;  \bar\nabla^ \lambda=\lambda^{-1}\dvector{0&\alpha\\0&0}+\dvector{\nabla^L &\beta\\ \gamma&\nabla^{L^*}}+\lambda(\dots).\]
Since $L$ is a holomorphic line subbundle, $\gamma$ is a complex linear 1-form, and flatness of $\bar\nabla^\lambda$ (for all $\lambda\in U$)
implies that $\alpha\in H^0(M, KL^2)$ and $\gamma\in H^0(M, KL^{-2})$ are holomorphic. Define a gauge transformation
\[\Lambda=\dvector{\frac{1}{\sqrt{\lambda}}&0\\0&\sqrt{\lambda}}\] with respect to $\C^2=L\oplus L^*$ obtaining
\[\tilde\nabla^\lambda:=\bar\nabla^\lambda.\Lambda.\] Note that even though the gauge is double-valued (in $\lambda$), the family $\tilde\nabla^\lambda$ is well-defined.
By construction $\tilde\nabla^\lambda$ is also a lift of $\hat{\mathcal D}$ in a neighborhood of $\lambda=0$
satisfying condition (2) of Theorem \ref{lifting_theorem}. We therefore have proven that in the case of $q$ is odd there exists
a map $\hat{\mathcal D}$ satisfying the condition (1) and (2) of Theorem \ref{lifting_theorem}.
From \eqref{DPW_upupstairs} we obtain that
 the residue of $\tilde\nabla^\lambda$ at $\lambda=0$ has  zeros of order $(q-2p-1)$ at the four preimages $\pi^{-1}(z_0)$ of the singular points on $\CP^1$ and no other zeros, and the Hopf differential
 (which is given by $\alpha$ up to a multiplicative constant) has zeros of order $2p-1$ at the same points. Note that for $0<\rho<\tfrac{1}{4}$ the number $4(q-2p-1)$ 
 of zeros of  the residue of $\tilde\nabla^\lambda$ at $\lambda=0$ is less than $2g-2=2q-4.$\\
 
In the second case, when $q$ is even, we pull back the connections
 to the totally branched $\tfrac{q}{2}$-covering $\pi\colon M\to \CP^1$
 determined by the algebraic equation
 \[Y^{\tfrac{q}{2}}=\frac{Z}{(Z-1)(Z-m)}.\]
  A similar argument as in the first case using a two-valued gauge
 \[g(y)=\dvector{y^{-\tfrac{p}{2}}&0\\0&y^{\tfrac{p}{2}}}\]
 shows the existence of a map $\mathcal D$ satisfying the conditions (1) and (2) of Theorem \ref{lifting_theorem} and determines the corresponding
 branch and umbilic orders at the  preimages $\pi^{-1}(p_k)$  ($k=1,\dots,4$) of the singular points on $\CP^1.$

  \begin{The}\label{branched_CMC}
Let $\rho\in\Q$ with $0<\rho<\tfrac{1}{2}$ and let $p$ and $q$ be coprime integers satisfying $\tfrac{p}{q}=\frac{2\rho+1}{4}$. 
The analytic continuation  of the CMC surface 
 \[f\colon T^2\setminus (l_1\cup l_2)\to S^3\]
in Theorem \ref{slitting_tori} is a compact (branched) CMC surface $\hat f\colon M\to S^3.$ For odd $q$  the surface $\hat f$ is of genus $g=q-1$ with
four umbilic branch points of branch order $q-2p-1$ and umbilic order $2p-1$.
For even $q$, $\hat f$ is of genus $g=\tfrac{q}{2}-1$ with branch order $\tfrac{q}{2}-p-1$ and umbilic order $p-1$ at these four points. The surface has no other
 branch 
points or umbilics.
 \end{The}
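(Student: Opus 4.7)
The plan is to follow the framework set up in the paragraphs preceding the theorem: starting from spectral data satisfying the hypotheses of Theorem \ref{slitting_tori}, the abelianization of Theorem \ref{2:1} produces a family $\tilde\nabla^\lambda$ of flat meromorphic $\SL(2,\C)$ connections on $\CP^1$ with four regular singularities whose residues have eigenvalues $\pm p/q$, locally of the form \eqref{DPW_sphere}. To turn this into a closed CMC surface, I would pull back $\tilde\nabla^\lambda$ along a cyclic cover $\pi:M\to\CP^1$ totally ramified over the four punctures, of degree $q$ when $q$ is odd and of degree $q/2$ when $q$ is even (in the even case $p$ is necessarily odd, so the $(q/2)$-th power of each local monodromy is $-I$, which is trivial modulo the center of $\SL(2,\C)$). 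The Riemann--Hurwitz formula then yields $2g-2 = q(-2)+4(q-1) = 2q-4$ in the odd case, and $2g-2 = -q+4(q/2-1) = q-4$ in the even case, giving $g=q-1$ respectively $g=q/2-1$.

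Next I would carry out the local analysis at the ramification points. Using a coordinate $y$ on $M$ with $y^q = z$ (resp.\ $y^{q/2} = z$), the pullback has the expansion \eqref{DPW_upstairs}. Applying the diagonal gauge $g(y) = \mathrm{diag}(y^{-p},y^{p})$, which is adapted to the CMC splitting and precisely cancels the logarithmic residue while preserving the $\SL(2,\C)$-form, yields a regular family of flat connections on $M$ whose $\lambda^{-1}$-term (the Higgs field $\Phi$) vanishes to order $q-2p-1$, and whose remaining lower-left entry $q a(0)y^{2p-1}\,dy$ encodes the Hopf differential and so vanishes to order $2p-1$. These are the stated branch and umbilic orders in the odd case; in the even case, performing the same computation with $y^{q/2}=z$ gives the orders $q/2-p-1$ and $p-1$.

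Finally I would assemble the global statement via the reconstruction of Theorem \ref{lifting_theorem} applied on $M$. Condition (3) of Theorem \ref{slitting_tori} ensures $\tilde\nabla^\lambda$ has no singularities on $\CP^1\setminus\{p_1,\ldots,p_4\}$ other than $\lambda=0$, so the pullback to $M$ is regular away from the ramification locus, which the gauge $g(y)$ already resolves. Unitarity along $S^1$ for the pullback is inherited from condition (4) together with Lemma \ref{a^u_symmetries}, and triviality of $\nabla^{\lambda_1}$ and $\nabla^{\lambda_2}$ at the two Sym points is inherited from condition (5). Theorem \ref{lifting_theorem} therefore produces a compact (possibly branched) CMC immersion $f:M\to S^3$; the local analysis shows that the branch and umbilic loci of $f$, being the zero sets of $\Phi$ and of the Hopf differential, are concentrated exactly at the four preimages of the punctures with the orders just computed, with no further branch or umbilic points. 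The step I expect to be the most delicate is the even-$q$ case: one must verify that the $\SL(2,\C)$-lift of the family genuinely descends to the smaller $q/2$-fold cover rather than only to the full $q$-fold cover, which requires tracking how the $\pm$-ambiguity in the $2\!:\!1$ correspondence of Theorem \ref{2:1} interacts with the $\lambda$-dependence of the monodromy signs.
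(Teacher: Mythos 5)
Your proposal is correct and follows essentially the same route as the paper: gauge the abelianized family to meromorphic connections on $\CP^1$ with residue eigenvalues $\pm p/q$, pull back along the totally branched cyclic cover of degree $q$ (resp.\ $q/2$), compute the genus by Riemann--Hurwitz, and apply the diagonal gauge $\mathrm{diag}(y^{-p},y^{p})$ to read off the branch order from the $\lambda^{-1}$-term and the umbilic order from the Hopf differential. The only additions are your explicit invocation of Theorem \ref{lifting_theorem} for the global reassembly (the paper instead treats the closed surface as the analytic continuation of the cut-torus immersion from Theorem \ref{slitting_tori}) and your flagging of the descent subtlety for even $q$, which the paper also leaves to ``a similar argumentation.''
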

 \begin{proof}
 We need to prove that the map $\hat{\mathcal D}\colon \C^*\to\mathcal A^2(M)$ satisfies condition (3) in Theorem \ref{lifting_theorem} and to relate the analytic continuation of the surface given by Theorem \ref{slitting_tori}
 with the surface $\hat f\colon M\to S^3$ corresponding to $\hat{\mathcal D}.$ 
 As we are primarily interested in proving the existence of closed CMC surfaces, the second part  is actually not so important, 
 and we only sketch the idea of proof: first observe that both surfaces $f\colon T^2\setminus (l_1\cup l_2)\to S^3$ and $\hat f\colon M\to S^3$ are the analytic continuations
 of the same CMC surface $\tilde f$ defined on $\CP^1\setminus\lambda(l_1\cup l_2).$ 
 This can be proven by carefully going through the construction of the associated families of the CMC surfaces $\tilde f$ and $f$ respectively $\hat f$.
 The associated families are constructed
  by an application of loop group factorization methods (see the proofs of Theorem 6 and Theorem 8 in \cite{He3}), and  it can be shown that the associated families must be related
  to each other by pullback since the unique parallel hermitian metrics are related by pullback.
  The last statement is equivalent to the fact that the surfaces are analytic continuations of each other.\\
  
  Finally, we show that the gauge classes $\mathcal D(\lambda_k)$ for $k=1,2$ are trivial on $M.$ By
  Remark \ref{2:1explicitRem}
it is enough to observe that the pullback of the Fuchsian system
\eqref{almost-trivial}  for $\epsilon=\tfrac{2\rho+1}{4}=\tfrac{p}{q}$ has trivial monodromy on $M.$
In the case of $q$ is odd the meromorphic function $Y\colon M\to\C$ determined by \eqref{algeq}
gives rise to a globally well-defined parallel frame $F$ of the pullback of the Fuchsian system
\eqref{almost-trivial}, namely
\[F=\dvector{\frac{1}{Y^p}&0\\0&Y^p}.\]
This follows from the observation that
\[\frac{dY}{Y}=\frac{1}{q} \frac{dY^q}{Y^q}=\frac{1}{q} d \log(\frac{Z}{(Z-1)(Z-m)})=\frac{1}{q}\pi^*(\frac{dz}{z}-\frac{dz}{z-1}-\frac{dz}{z-m}).\]
In the case of $q$ is even we obtain a double-valued parallel frame
\[F=\dvector{ Y^{-\tfrac{p}{2}}&0\\0&Y^{\tfrac{p}{2}} }   \]
which is good enough to obtain a closed surface on $\hat f\colon M\to S^3$ (recall the discussion in the beginning of the proof of Theorem \ref{slitting_tori}). 
  \end{proof}
\begin{Rem}
In the case $\rho=\tfrac{1}{6}$ Theorem \ref{branched_CMC} is equivalent to Theorem 8  in \cite{He3} and yields
 Lawson symmetric CMC surfaces of genus $2$ in terms of spectral data.
 More generally, for  $\rho=\tfrac{g-1}{2g+2}$ we obtain compact CMC
 surfaces
(without branch points) of genus $g$ with  $\Z_{g+1}$
 symmetry. 
\end{Rem}
\begin{Rem}
We have dealt with the even and the odd case by working on the 4-punctured sphere.
In the even case it would have been more natural to work directly on the 4-punctured torus in order to avoid some of the two-valued gauge transformations.
But then we would have needed to prove an analogon of \ref{atlambda=0} for the 4-punctured torus, which we have decided to avoid as being a repetetion.
\end{Rem}

\section{A generalized Whitham flow on the spectral data}\label{Whitham flow}
It is known that the generic spectral data $(\Sigma,\chi,\mathcal D)$ of CMC tori in the 3-sphere can be uniquely deformed such that induced deformation of the corresponding  surface preserves the closing conditions.  The existence of these so called Whitham deformation for CMC tori, introduced in \cite{HaKiSch1, HaKiSch2, KSS}, is shown by applying the implicit function theorem on some finite dimensional "function" space (in fact, the space
one is looking at is in general an affine space whose underlying vector space is a function space) over the compact spectral curve. 
We generalize this approach by taking the weight $\rho$ as flow parameter, which can be considered as the generalized genus of the immersion. Our flow interpolates between CMC surfaces of different genera (via CMC immersions on a torus with two cuts $l_1$ and $l_2$) preserving the intrinsic and most of the extrinsic closing conditions. If the initial surface is closed,  then for small $\rho\in \Q$ during the deformation the analytic continuation of the CMC surface is closed by Theorem \ref{branched_CMC}. For the CMC tori considered in Sections \ref{Tori_spec_gen_0} and \ref{Tori_spec_gen_1} as initial data, we show in the following that the closing conditions we impose determine a unique vector field for the deformation, and that its flow exists for small times. We conjecture that the flow exist for short times for generic CMC tori as initial surface.

 \subsection{Flowing from stable homogeneous tori}\label{31}
The first case we want to consider is the deformation of the spectral data for homogenous tori; see Section \ref{Tori_spec_gen_0}, with $\sqrt{2}\leq R < 2.$
The corresponding conformal type of the Riemann surface is given by  $T^2=\C/\Gamma$ with $\Gamma=2\Z+2\tau \Z$ for some
$\tau\in i\R$ with $1 \leq\Im (\tau) <\sqrt{3}.$ The 
spectral curve is  given by the two-fold covering 
\[\CP^1\ni \xi\mapsto \lambda = \xi^2 \in\CP^1\]
and the map $\chi$ is given by its lift
\[\hat\chi(\xi)= \tfrac{R\pi i}{4\tau}\xi d\bar w,\]
as derived in Section \ref{Tori_spec_gen_0}.
Again, we identify
\[\mathrm{Jac}(T^2)=\overline{H^0(T^2,K)}/\Lambda=\{\xi d\bar w\mid\xi\in\C\}/(\tfrac{\pi i}{2\tau}\Z+\tfrac{\pi i}{2}\Z)d\bar w.\]
Recall that the extrinsic closing condition, i.e., the Sym point condition, relates $R$ to the conformal type $\tau$ by \eqref{homogeneous_sym}.\\

Using the global coordinate $d\bar w$ to identify 
$\overline{H^0(T^2,K)}$ with $\C$
 the lattice $\Lambda$ corresponds to the lattice $\hat\Lambda$ generated by $\tfrac{\pi i}{2\tau}$ and $\tfrac{\pi i\tau}{2\tau}$ in $\C.$ \\

For  $d>0$ we consider the (complex) Banach space of bounded holomorphic functions on the disc
$\{\xi\in\C\mid \xi\bar\xi<d^2\}$ equipped with the supremum norm. 
We denote by $\mathcal B_d$ the real Banach subspace consisting of odd functions with respect to
$\xi\mapsto-\xi$ satisfying the reality condition
$f(\bar\xi)=\overline{f(\xi)}.$\\

Recall from Lemma \ref{real_analytic} that the Mehta-Seshadri section $\alpha^{MS}_\rho$ depends real analytically
on $[\xi]\in \mathrm{Jac}(T^2)$ as well as its "lift"
$\alpha_\rho^u\colon \C\setminus \hat\Lambda
\to\C$ does; see \eqref{lift_alpha}.
For $\rho\in]-\tfrac{1}{2},\tfrac{1}{2}[$ and a holomorphic function $f\in \mathcal B_d$ (with $d>1$)
such that $f(\xi)\notin\hat\Lambda$ for all $\xi\in S^1$ we consider the holomorphic function
\[\alpha^\rho(f)\colon U\subset\C\to\C\]
defined on an open neighborhood $U$ of the unit circle $S^1\subset\C$  uniquely determined by the property
\begin{equation}\label{holomorphic_extension}
\alpha^\rho(f)(\xi)=\alpha^u_\rho(f(\xi)) \text{ for all } \xi\in S^1. 
\end{equation}

\begin{Lem}\label{A}
Let $R\in \R^{>0}$ be such that $\hat{\chi}(\xi)=R\tfrac{\pi i}{4\tau}\xi$ 
 does not contain a lattice point in $\hat\Lambda$
along the unit circle. 
There exist real numbers $c,\delta>0$ and an open neighborhood $\mathcal U\subset\mathcal B_{1+c}$ of
$\hat{\chi}$
such that for all $-\delta<\rho<\delta$ and for all $f\in \mathcal U$ the holomorphic function $\alpha^\rho(f)$ is well-defined and bounded on
the annulus $\{\xi\in\C\mid \tfrac{1}{(1+c)}<\mid\xi\mid<1+c\}.$ Moreover, $\alpha^\rho(f)$ is odd and satisfies
$\alpha^\rho(f)(\bar\xi)=\overline{\alpha^\rho(f)(\xi)}.$
\end{Lem}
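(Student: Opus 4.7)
The plan is to exploit the identity $\bar\xi=\xi^{-1}$ on $S^1$: although $\alpha^u_\rho$ is merely real analytic in $\chi$, this substitution lets us trade its anti-holomorphic dependence for a holomorphic dependence on $\xi^{-1}$, thereby turning the composition $\alpha^u_\rho\circ f$ on $S^1$ into the restriction of a function holomorphic on an annulus around $S^1$.

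Concretely, I would first invoke Lemma \ref{real_analytic} to complexify $\alpha^u$: since $\alpha^u$ is jointly real analytic in $(\rho,\chi)\in (-\tfrac12,\tfrac12)\times(\C\setminus\hat\Lambda)$, it admits a joint holomorphic extension $\beta_\rho(\chi,\eta)$ in two complex variables with $\beta_\rho(\chi,\bar\chi)=\alpha^u_\rho(\chi)$, also holomorphic in a complex parameter $\rho$. The hypothesis that $\hat\chi(\xi)=\tfrac{R\pi i}{4\tau}\xi$ avoids $\hat\Lambda$ on $S^1$ makes $K:=\hat\chi(S^1)$ a compact subset of $\C\setminus\hat\Lambda$, so by compactness one obtains $\delta>0$ and an open neighbourhood $V\subset\C^2$ of the anti-diagonal $\{(\chi,\bar\chi):\chi\in K\}$ on which $\beta_\rho$ is holomorphic and bounded uniformly for $|\rho|<\delta$.

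Next, given $f\in\mathcal B_{1+c}$, I would set $\tilde f(\xi):=\overline{f(1/\bar\xi)}$; this is holomorphic on the annulus $A_c:=\{1/(1+c)<|\xi|<1+c\}$ and satisfies $\tilde f(\xi)=\overline{f(\xi)}$ on $S^1$. Then $\alpha^\rho(f)(\xi):=\beta_\rho(f(\xi),\tilde f(\xi))$ is holomorphic on $A_c$ provided that $(f(\xi),\tilde f(\xi))\in V$ for all $\xi\in A_c$. For $f=\hat\chi$ this is ensured, for $c$ small, by continuity of $\hat\chi$ and $\tilde{\hat\chi}$; since the condition is open in the sup norm it persists for $f$ in a sufficiently small sup-norm ball $\mathcal U\subset\mathcal B_{1+c}$. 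Boundedness on $A_c$ follows from the uniform bound on $\beta_\rho$ over $V$, by choosing $c$ slightly smaller than the radius for which the inclusion into $V$ still holds. The oddness of $\alpha^\rho(f)$ and the reality relation $\alpha^\rho(f)(\bar\xi)=\overline{\alpha^\rho(f)(\xi)}$ I would verify first on $S^1$, using the symmetries of $\alpha^u_\rho$ from Lemma \ref{a^u_symmetries} (valid because $\tau\in i\R$) together with the defining symmetries of elements of $\mathcal B_{1+c}$, and then extend them to $A_c$ by uniqueness of holomorphic continuation.

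The main obstacle is the uniform complexification step: one needs $\beta_\rho$ to be defined on a neighbourhood of $\{(\chi,\bar\chi):\chi\in K\}$ of positive width, jointly in $\rho$. This rests on Lemma \ref{real_analytic} together with a standard compactness argument to pass from pointwise real-analyticity to a uniform radius of complex extension. Once this is in hand, the remaining items — construction of $\tilde f$, choice of $\mathcal U$ and $\delta$, boundedness, and the two symmetry conditions — are open-condition or analytic-continuation arguments.
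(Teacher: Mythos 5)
Your proposal is correct and follows essentially the same route as the paper: complexify the real-analytic Mehta--Seshadri function $\alpha^u_\rho$ to a bounded holomorphic function of two complex variables on a neighbourhood of the anti-diagonal over the compact set $\hat\chi(S^1)\subset\C\setminus\hat\Lambda$, substitute the pair $\bigl(f(\xi),\overline{f(\bar\xi^{-1})}\bigr)$, and use the sup-norm openness of the containment condition to obtain $\mathcal U$, $c$ and $\delta$. The paper's proof is exactly this argument, with the symmetry statements likewise deferred to Lemma \ref{a^u_symmetries}.
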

\begin{proof}
The image  $\hat{\chi}( S^1)$ is a compact subset $C$ of $\overline{H^0(T^2,K)}\cong\C$
which does not contain a lattice point in $\hat\Lambda.$ Hence there exists an open neighborhood 
$V\subset \C\times\C^2$
containing the set
\[\{ (0,R\xi,R\xi^{-1})\mid\xi\in S^1\subset\C\}\]
on which there is a bounded holomorphic 
function
\[F\colon V\to \C\]
with the property
\[F_\rho(\xi,\bar \xi)=\alpha^\rho_u(\xi)\]
for all $(\rho,\xi,\bar\xi)\in\R\times\C^2\cap V.$
Let  $An_c : =\{\xi\in\C\mid \tfrac{1}{(1+c)}<|\xi|<1+c\}.$
Then there exist $c>0$ and $\delta>0$ with
\[]-2\delta;2\delta[\;\times\;  \{  (\hat{\chi}(\xi), \overline {\hat{\chi}(\xi)} )\mid \xi\in An_{2c}\} \subset V.\]

Since  $\mathcal B_d$ is equipped with the supremum norm,  there exists an open neighborhood $\mathcal U$
of $\hat{\chi}\in\mathcal B_{1+c}$ such that for all $\rho\in]-\delta;\delta[,$ for all $f\in\mathcal U$ and for all
$\xi\in An_c$ we have
\[(\rho,f(\xi),\overline{f(\bar\xi^{-1})})\in V.\]
But for $\rho\in]-\delta;\delta[,$ $f\in \mathcal U$ and $\xi\in An_c$ we have
\[\alpha^\rho(f)(\xi)=F_\rho(f(\xi),\overline{f(\bar\xi^{-1})})\]
which proves that for all $f \in \mathcal U$ the holomorphic function $\alpha^\rho(f)$ is  bounded on $An_c.$
The remaining statements follow from Lemma \ref{a^u_symmetries}.
\end{proof}

\begin{Lem}\label{B}
Let $\tau\in i\R^{\geq1}$ be the conformal type of a homogeneous CMC torus with $\hat\chi(\xi)=R_0\tfrac{\pi i}{4\tau}\xi$ for
 $ R_0=\sqrt{1+\tau\bar\tau}\in [\sqrt{2}, 2[.$
Then there is an open neighborhood $U\subset\R^2$ of $(0,R_0) \in \R^2$ and a constant $\epsilon>0$
with the property that for all $(\rho,R) \in U$ there exists a unique holomorphic function
$\hat\chi^\rho_R\in \mathcal U\subset\mathcal B_{1+\epsilon}$  
such that the holomorphic function
\[\alpha^\rho(\hat\chi^\rho_R)\]
extends holomorphically to $\{\xi\mid 0<\xi\bar\xi\leq 1\}$ and has a first order pole at $\xi=0$ with
residue $R\tfrac{\pi i}{4\tau}.$ Moreover $\hat \chi^\rho_R$ depends smoothly on $(\rho,R).$
\end{Lem}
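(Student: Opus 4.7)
The plan is to apply the implicit function theorem in real Banach spaces, exploiting the fact that at $\rho=0$ the Metha-Seshadri section degenerates to an essentially explicit formula. For $\rho=0$ the connection \eqref{connection1form} becomes a diagonal line bundle connection, and unitarity forces $\alpha^u_0(\chi)=\bar\chi$, so $\alpha^0(f)(\xi)=\overline{f(\bar\xi^{-1})}$. Because $\tau\in i\R$ the scalar $R\tfrac{\pi i}{4\tau}$ is real, and evaluating on $\hat\chi(\xi)=R_0\tfrac{\pi i}{4\tau}\xi$ gives $\alpha^0(\hat\chi)(\xi)=R_0\tfrac{\pi i}{4\tau}\xi^{-1}$. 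Thus $\hat\chi$ already solves the problem at $(\rho,R)=(0,R_0)$, with the correct residue and no other singularities in the closed unit disc.

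To promote this to a neighborhood, introduce the real Banach space $\mathcal B^{\mathrm{ext}}_c$ of bounded holomorphic functions on $\{|\xi|>1/(1+c)\}$ that are odd, vanish at $\infty$, and satisfy $g(\bar\xi)=\overline{g(\xi)}$, equipped with the supremum norm. For $(f,\rho)\in\mathcal U\times {]}{-}\delta,\delta[$ the function $\alpha^\rho(f)$ from Lemma \ref{A} is holomorphic on the annulus $An_c$; let $P_-$ denote the projection to its exterior Laurent part. The condition of the lemma is equivalent to $P_-\bigl(\alpha^\rho(f)\bigr)=R\tfrac{\pi i}{4\tau}\,\xi^{-1}$, so I would define
\[
\Phi\colon\mathcal U\times{]}{-}\delta,\delta[\,\times\R\longrightarrow\mathcal B^{\mathrm{ext}}_c,\qquad
\Phi(f,\rho,R)=P_-\bigl(\alpha^\rho(f)\bigr)-R\tfrac{\pi i}{4\tau}\,\xi^{-1},
\]
and look for $\hat\chi^\rho_R$ as a zero of $\Phi(\,\cdot\,,\rho,R)$.

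Smoothness of $\Phi$ follows directly from the real-analytic dependence of $\alpha^u_\rho$ on $(\rho,\chi)$ in Lemma \ref{real_analytic} combined with the holomorphic construction of $\alpha^\rho(f)$ in Lemma \ref{A}; the projection $P_-$ is a bounded linear map. At the base point $\Phi(\hat\chi,0,R_0)=0$, and the $f$-derivative there is
\[
D_f\Phi(\hat\chi,0,R_0)(g)=\overline{g(\bar\xi^{-1})},
\]
which sends a Taylor expansion $\sum_{n\ \mathrm{odd},\,n\geq 1}c_n\xi^n$ (with $c_n\in\R$ by the reality condition) to $\sum_{n\ \mathrm{odd},\,n\geq 1}c_n\xi^{-n}$. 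This is manifestly an isometric isomorphism between $\mathcal B_{1+c}$ and $\mathcal B^{\mathrm{ext}}_c$. The implicit function theorem then produces a unique smooth map $(\rho,R)\mapsto\hat\chi^\rho_R\in\mathcal B_{1+\epsilon}$ on some open neighborhood of $(0,R_0)$, with $\epsilon<c$.

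The main obstacle is conceptual rather than computational: one must set up the function spaces so that the nonlinear equation has the shape of a Birkhoff-type splitting problem, whose linearization at $\rho=0$ becomes transparent. Once the direct sum decomposition of Laurent series on $An_c$ into $\mathcal B_{1+c}\oplus\mathcal B^{\mathrm{ext}}_c$ is in place, the IFT does all the remaining work. The hypothesis $\sqrt{2}\leq R_0<2$ plays no direct role in this local argument, but it ensures that the image of $\hat\chi$ along $S^1$ remains strictly inside a fundamental domain of $\hat\Lambda$, so the neighborhood $\mathcal U$ and constants $c,\delta$ from Lemma \ref{A} are available along the deformation.
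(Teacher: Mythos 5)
Your proposal is correct and follows essentially the same route as the paper: the same splitting of Laurent series on the annulus into interior and exterior parts (your $P_-$ and $\mathcal B^{\mathrm{ext}}_c$ are the paper's $M$ and $\bar{\mathcal B}_{1+\epsilon}$), the same identification of the linearization at $\rho=0$ as $g\mapsto g(\xi^{-1})=\overline{g(\bar\xi^{-1})}$ being a Banach space isomorphism, and the same application of the implicit function theorem, with Lemma \ref{A} supplying boundedness and smooth dependence. Your closing remark on the role of $\sqrt{2}\leq R_0<2$ (keeping the image of $S^1$ off the lattice $\hat\Lambda$ so that Lemma \ref{A} applies) is exactly the point the paper makes at the start of its proof.
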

\begin{proof}
For $R \in ]1, 2[$ consider the holomorphic function $\hat\chi^0_R(\xi)=R\tfrac{\pi i}{4\tau}\xi$. The image of the unit circle under $\hat\chi$ does not contain a lattice point in $\hat\Lambda$.
By Lemma \ref{A} there exists an $\epsilon>0$ and an open neighborhood $\mathcal U$ of $\hat \chi^0_R$
in $\mathcal B_{1+\epsilon}$ such that  the holomorphic function
$\alpha^\rho(f)$ is bounded on the annulus $An_\epsilon$ for all $f\in \mathcal U$ and $\rho$ small enough.\\

Let  $\bar{\mathcal B}_{1+\epsilon}$ be  the space of odd, bounded holomorphic functions
on $\CP^1\setminus\{\xi \mid |\xi|\leq\tfrac{1}{1+\epsilon}\}$ satisfying
$f(\bar\xi)=\overline{f(\xi)}.$ Clearly,  $\mathcal B_{1+\epsilon}$ and $\bar{\mathcal B}_{1+\epsilon}$ are isomorphic
by $f\mapsto (\xi\mapsto f(\xi^{-1})).$
For $f\in\bar{\mathcal B}_{1+\epsilon}$ with
Laurent expansion $f=\sum_k f_k\xi^k$ along $S^1\subset\C$
define the bounded linear map
\begin{equation}\label{Mdef}M\colon \bar{\mathcal B}_{1+\epsilon}\to\bar{\mathcal B}_{1+\epsilon};\, M(f) =\sum_{k<0}f_k\xi^k.\end{equation}
Consider the  differentiable map between Banach spaces
\[A_\rho\colon \mathcal U\subset\mathcal B_{1+\epsilon}\to \bar{\mathcal B}_{1+\epsilon};\,f\mapsto M(\alpha^\rho(f)).\]
By Lemma \ref{A} $A_\rho$ depends smoothly on $\rho.$ Further,  for $\rho=0$ the differential of $A_{\rho}$ is  an (continuos) Banach space isomorphism
because $\alpha^0(f(\xi))=f(\xi^{-1}).$ 
Hence, the differential of $A_\rho$ is an isomorphism for small $\rho$ in a neighborhood of $\hat\chi.$
By
the implicit function theorem there exists an open neighborhood $U\subset\R^2$ of $(0,R_0)$ such that  for all $(\rho, R) \in U$ there exist a map $\chi^\rho_R \in \mathcal B_{1+ \epsilon}$
with \[A_\rho(\chi^\rho_R)(\xi)=R\tfrac{\pi i}{4\tau}\xi^{-1},\]
proving the lemma.
\end{proof}

\begin{The}\label{flow_homog}
Let $\tau\in i\R^{\geq1}$ be the conformal type of a homogeneous CMC torus with $\chi(\xi)=R_0\tfrac{\pi i}{4\tau}\xi,$ where
 $R_0=\sqrt{1+\tau\bar\tau} \in [\sqrt{2}, 2[.$
Then for $\rho$ small enough there exist  a unique
 $\chi^\rho\colon\{\xi\mid |\xi|<1+\epsilon\}\to \mathrm{Jac}(T^2)$ 
 and a lift $\mathcal D^\rho\colon\{\xi\mid |\xi|<1+\epsilon\}\to\mathcal A^1(T^2)$
 satisfying the conditions of Theorem \ref{slitting_tori}.
  \end{The}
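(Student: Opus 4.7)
The strategy is to package Lemma \ref{B} into genuine spectral data by invoking the implicit function theorem to maintain the extrinsic Sym point condition as $\rho$ varies. For every $(\rho,R)$ in a neighborhood of $(0,R_0)$, Lemma \ref{B} provides an odd holomorphic $\hat\chi^\rho_R \in \mathcal B_{1+\epsilon}$ such that $\alpha^\rho(\hat\chi^\rho_R)$ extends holomorphically to $\{0<|\xi|\le 1\}$ with a simple pole of residue $R\tfrac{\pi i}{4\tau}$ at $\xi=0$. I would define $\chi^\rho_R \colon \Sigma \to Jac(T^2)$ by projecting $\hat\chi^\rho_R$ to $\C/\hat\Lambda$ and $\mathcal D^\rho_R := d + \alpha^\rho(\hat\chi^\rho_R)\,dw - \hat\chi^\rho_R\,d\bar w$, which is then an odd meromorphic lift of $\chi^\rho_R$ into $\mathcal A^1(T^2)$ with a single simple pole over $\lambda=0$.

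Conditions (1), (2), (4) of Theorem \ref{slitting_tori} are immediate: (1) from oddness, (2) from the pole structure in Lemma \ref{B}, and (4) from the defining relation $\alpha^\rho(f)=\alpha^u_\rho\circ f$ on $S^1$, which gives $\mathcal D^\rho_R(\xi)=\alpha^{MS}_\rho(\chi^\rho_R(\xi))$. Condition (3) reduces to showing that $\chi^\rho_R$ vanishes nowhere on the closed unit disc besides at $\xi=0$. At $\rho=0$ this is elementary: $\hat\chi^0_{R_0}$ maps the closed unit disc onto a Euclidean disc of radius $R_0\tfrac{\pi}{4|\tau|}$, which is strictly less than the distance $\tfrac{\pi}{2|\tau|}$ from $0$ to the nearest non-zero point of $\hat\Lambda$ (using $|\tau|\ge 1$ and $R_0<2$). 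Continuity in $(\rho,R)$ extends this to a neighborhood of $(0,R_0)$.

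The substantive step is condition (5). At $\rho=0$ one solves $\hat\chi^0_{R_0}(\xi)=\tfrac{\pi i(1+\tau)}{4\tau}$ directly, obtaining $\xi_{1,0}=(1+\tau)/\sqrt{1+\tau\bar\tau}\in S^1$; since $\tau\in i\R^{\ge 1}$, both real and imaginary parts of $\xi_{1,0}$ are strictly positive, so $\xi_{1,0}\notin \R\cup i\R$. Together with the $\sigma$-symmetry, the reality $\hat\chi^\rho_R(\bar\xi)=\overline{\hat\chi^\rho_R(\xi)}$, and the half-lattice identity $2\cdot\tfrac{\pi i(1+\tau)}{4\tau}\in\Lambda$, this yields the four pairwise distinct Sym points $\pm\xi_{1,0},\pm\bar\xi_{1,0}$. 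To continue this in $\rho$ I would apply the implicit function theorem to
\[
S(\rho,R,\theta):= \hat\chi^\rho_R(e^{i\theta}) - \tfrac{\pi i(1+\tau)}{4\tau}\in\C/\Lambda
\]
in the unknowns $(R,\theta)\in\R\times\R$. At $(0,R_0,\theta_0)$ the two partial derivatives $\partial_R\hat\chi^0_R|_{\xi_{1,0}}=\tfrac{\pi i}{4\tau}\xi_{1,0}$ and $\partial_\theta\hat\chi^0_{R_0}|_{\xi_{1,0}}=-R_0\tfrac{\pi}{4\tau}\xi_{1,0}$ have ratio $-i/R_0\notin\R$, hence are $\R$-linearly independent in $\C\cong\overline{H^0(T^2,K)}$; this non-degeneracy is the technical core of the argument. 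The implicit function theorem then supplies unique smooth $R(\rho),\theta(\rho)$, and one sets $\chi^\rho:=\chi^\rho_{R(\rho)}$, $\mathcal D^\rho:=\mathcal D^\rho_{R(\rho)}$.

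With all five conditions of Theorem \ref{slitting_tori} verified, that theorem produces the conformal CMC immersion $f^\rho \colon T^2\setminus(l_1\cup l_2)\to S^3$, and uniqueness of $(\chi^\rho,\mathcal D^\rho)$ follows from the uniqueness clauses in Lemma \ref{B} together with that in the implicit function theorem. For rational $\rho$, Theorem \ref{branched_CMC} ensures that the analytic continuation of $f^\rho$ closes up to a compact (possibly branched) CMC surface of the predicted genus and branch structure. The main nontrivial step beyond bookkeeping is the non-degeneracy calculation for the Sym point Jacobian, which is clean in the present setting thanks to the explicit linearity of $\hat\chi^0$ in $\xi$.
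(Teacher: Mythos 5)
Your proposal follows essentially the same route as the paper's proof: Lemma \ref{B} supplies the one-parameter family $\chi^\rho_R$ satisfying the intrinsic closing condition, and a second application of the implicit function theorem (using the invertibility of the differential of $(\xi,R)\mapsto\chi^0_R(\xi)$ on $S^1\times\R^{>0}$, which is exactly your $(R,\theta)$ non-degeneracy computation) fixes $R(\rho)$ so that the Sym-point condition holds, with oddness and reality producing the four distinct points; the conclusion then follows from Theorems \ref{slitting_tori} and \ref{branched_CMC}. Your explicit verification of conditions (1)--(4), in particular the disc-radius argument showing $\hat\chi^0_{R_0}$ avoids nonzero lattice points for $R_0<2$ so that condition (3) is vacuous, is a correct filling-in of details the paper leaves implicit.
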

 \begin{proof}
By Lemma \ref{B} we obtain for every $\rho$ near $ 0$ a one-dimensional set of functions
$\chi^\rho_R$ parametrized by $R$ satisfying the intrinsic closing condition that
$\alpha^\rho(\chi^\rho_R)$ extends to  the punctured disc $\{\xi\mid 0<\xi\bar\xi\leq 1\}$ with a first order pole at $\xi=0$ and
residue $R\tfrac{\pi i}{4\tau}.$ Moreover, $\chi^0_R(\xi)=R\tfrac{\pi i}{4\tau}\xi,$ and by restricting this map to $\xi\in S^1$ the differential of the map
\[(\xi,R)\in S^1\times\R^{>0}\mapsto \chi^0_R(\xi)\in \mathrm{Jac}(T^2)\] 
is invertible  everywhere. By the implicit function theorem there exists a smooth map
\[\rho\mapsto R(\rho)\]
with $R(0)=R_0$ such that the map \[\chi^\rho=\chi^\rho_{R(\rho)}\]
has the value $\tfrac{\pi i(1+\tau)}{4\tau}$ at some $\xi_1(\rho) \in S^1$. That $\chi^\rho$ is odd and real along the real axis
 $\R\subset\C$ implies  the existence of four distinct points on $\xi_i \in S^1$ with
$\chi(\xi_i)=\tfrac{\pi i(1+\tau)}{4\tau}$ mod $\hat \Lambda$ (for small $\rho$).
\end{proof}

 An immediate consequence of this Theorem together with Theorem \ref{slitting_tori} and Theorem \ref{branched_CMC} is:
\begin{Cor}
For large genus $g>>2$ we obtain new 1-parameter families of 
 compact branched CMC surfaces. The families are parametrized by the conformal type $\tau \in [1, \sqrt{3})i $ of the corresponding CMC torus.
  If the flow reaches $\rho = \tfrac{g-1}{2g +2}$ we obtain closed CMC immersions of genus g.
 \end{Cor}
\begin{Exa} We consider the case where the initial surface of the generalized Whitham flow is the (minimal) Clifford torus. The Clifford torus is a Lawson surface \cite{L}, namely $\xi_{1,1}$,  and has square conformal structure. This implies that there is an
additional symmetry on the Riemann surface and its Jacobian which we denote by $i$.  It is easy to show that for $\rho>0$
 the Mehta-Seshadri section of the square torus
 also has an additional symmetry: in terms of the lift to the universal covering we have for all
$\xi\notin\tilde\Lambda$
\begin{equation}\label{alpha-i-symmetry}
\alpha_\rho^u(i\xi)=-i\alpha_\rho^u(\xi).
\end{equation}
Consequently, we can restrict to the following subspaces of holomorphic functions:
\[\mathcal B^+_{1+\epsilon}=\{f\in\mathcal B_{1+\epsilon}\mid f(i\xi)=if(\xi)\}\]
and
\[\bar{\mathcal B}^-_{1+\epsilon}=\{f\in\bar{\mathcal B}_{1+\epsilon}\mid f(i\xi)=-if(\xi)\}\]
in the proof of the Theorem \ref{flow_homog}. Because of \eqref{alpha-i-symmetry} all maps remain well-defined by restricting to these two subspaces.  Thus  the map 
$\chi^\rho$ constructed  in Theorem \ref{flow_homog} satisfies 
\[\chi^\rho(i\xi)=i\chi^\rho(\xi)\]
for $\rho>0.$
Therefore the preimages of the Sym points  are $\xi=\pm e^{\pm\tfrac{ \pi i}{4}}$
and all surfaces $f^{\rho}$ within the flow starting at the Clifford torus are minimal. Let $S \subset T^2=\C/(2\Z+2i\Z)$ be the square defined by the vertices $[0],[\tfrac{1+i}{2}],[1],[\tfrac{1-i}{2}] \in T^2.$ Since the coordinate lines of $T^2$ are curvature lines of the corresponding surfaces (by reality of the Hopf differential), the diagonals are asymptotic lines. 
Because of the reality of the spectral data, the diagonals are geodesics on the surface.
Thus the image of the boundary of $S$ under the minimal immersion $f^{\rho}$ is a geodesic 4-gon, which is the geodesic polygon used in \cite{L} to construct the Lawson surfaces; see Figure \ref{fig:lawson}.
\end{Exa}

\subsection{Flowing from Delaunay tori}\label{32}
Next we  define the generalized Whitham flow starting at a slightly more complicated surface class: the 2-lobed Delaunay tori.
As we have seen in Section \ref{Tori_spec_gen_1} the spectral data $\chi\colon \Sigma\to \mathrm{Jac}(T^2)$ 
of a 2-lobed Delaunay torus does not lift to a well-defined function $\hat{\chi}\colon \Sigma\to \overline{H^0(T^2,K)}$ but rather has periods. Nevertheless, these periods
are constant along every continuous deformation and therefore  a deformation vector field of $\chi$ is still a well-defined map
into $\overline{H^0(T^2,K)} \cong \C.$ 
The spectral curve $\Sigma$ of a Delaunay torus is given
by the algebraic equation
\[\Sigma:\; y^2=\lambda(\lambda-r)(\lambda-\tfrac{1}{r})\]
for some $r\in]0;1[$, and we consider $y,\lambda\colon\Sigma\to \CP^1$ as meromorphic functions.
 It is well-known that $r$ is determined by $\tau_{spec}\in i\R$ and vice versa.
 The real involution which fixes the preimages $\lambda^{-1}(S^1)$ of the unit circle is given by
 \begin{equation}\label{real_involution}
 (y,\lambda)\mapsto(\bar y \bar\lambda^{-2},\bar\lambda^{-1}).
 \end{equation}
 
 Deformations of odd maps to $\mathrm{Jac}(T^2)$ can be identified
with odd functions on $\Sigma,$ and any odd holomorphic function $\hat f\colon U\subset \Sigma\to \C$ on an open subset $U$ is given by
\[\hat f=y f,\]
where $f\colon \lambda(U)\to \C$ is a holomorphic function on the spectral plane $\C$ with coordinate $\lambda$ (we use the shorthand notation $f=f\circ\lambda\colon U\to\C$ throughout  the section). Hence,
we work with the Banach space $\mathcal B_{d}$ of bounded holomorphic functions
\[f\colon\{\lambda\in\C\mid |\lambda|<d\}\to\C\] satisfying the condition $f(\bar\lambda)=\overline{f(\lambda)}.$ 
We also consider the Banach space $\bar{\mathcal B}_d$ of bounded holomorphic functions
on $\CP^1\setminus\{\lambda\in\C\mid |\lambda|\leq \tfrac{1}{d}\}$ satisfying $f(\bar\lambda)=\overline{f(\lambda)}.$
Clearly, $\mathcal B_d$ and $\bar{\mathcal B}_d$ are isomorphic via $f\mapsto (\lambda\mapsto f(\lambda^{-1})).$\\

We fix  $\rho\in]-\tfrac{1}{2};\tfrac{1}{2}[. $ For the 2-lobed Delaunay tori the map $\chi$ has period $2.$ Let 
\begin{equation}\label{Xbase_point}
\chi=\int(a\wp(\xi-\tfrac{\tau_{spec}}{2})+b) d\xi,
\end{equation} where $a$ and $b$ are defined in \eqref{2lobe_spec_periods2}
and $\xi$ is the holomorphic coordinate on $\Sigma \cong \C/\Gamma_{spec}$. Let  
\[\hat A(\xi)=-\overline{ \chi(\bar\xi+\tfrac{\tau_{Spec}}{2})}.\]
Then  every odd holomorphic map with period $2$ along $1 \in \pi_1(\Sigma)$ 
and trivial period along $\tau_{spec} \in \pi_1(\Sigma)$  is given by 
\[\tilde\chi= \chi+ y f\colon \lambda^{-1}(\{\lambda\in\C\mid |\lambda|<d\})\subset\Sigma\to \C\]
for a holomorphic function $f: \{\lambda\in\C\mid |\lambda|<d\}\rightarrow \C$. There exist a holomorphic function
\[\alpha^\rho(\tilde\chi)\colon U\to \C\]
defined on an open neighborhood $U$ of $\lambda^{-1}(S^1)\subset \Sigma$
satisfying the equation
\[[d+(\alpha^\rho(\tilde\chi)(\xi)+\hat A(\xi)) \tfrac{\pi i}{2\tau}dw-\tilde\chi(\xi)\tfrac{\pi i}{2\tau}d\bar w]=\alpha^{MS}_\rho([\tilde\chi(\xi)\tfrac{\pi i}{2\tau}d\bar w])\in\mathcal A^1(T^2)\]
for all $\xi\in \lambda^{-1}(S^1).$ The splitting of the anti-holomorphic structure  $\alpha_\rho^u$ into $\hat A$- and $\alpha^\rho(\tilde\chi)$-parts ensures that $\alpha^\rho(\tilde\chi)$ is single-valued on $U$ by the functional properties of $\alpha_\rho^u$; see \eqref{lift_alpha}. 
Analogously to  Lemma \ref{A} we obtain:
\begin{Lem}\label{A2}
Let $T^2$ be a rectangular torus and $\chi\colon\Sigma\to \mathrm{Jac}(T^2)$
be as  in \eqref{Xbase_point}. Assume that 
the image under 
$\chi$ of the preimage of the unit circle 
$\lambda^{-1}(S^1)\subset\Sigma$ does not contain a lattice point.
 Then 
there exist $c>0,$ $\delta>0$ and an open neighborhood $\mathcal U\subset\mathcal B_{1+c}$ of $0$
such that for all $-\delta<\rho<\delta$ and for all $f\in \mathcal U$ the holomorphic function $\alpha^\rho(\chi+y f)$ is well-defined and bounded on
$\lambda^{-1}\{\lambda\in\C\mid \tfrac{1}{(1+c)}<|\lambda|<1+c\}.$ Moreover, $\alpha^\rho( \chi+y f)$ is odd and satisfies
$\alpha^\rho(\chi+y f)(\bar\xi)=\overline{\alpha^\rho(\chi+y f)(\xi)}.$
\end{Lem}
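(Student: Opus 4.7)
The plan is to transplant the argument of Lemma \ref{A} from $\C P^1$ to the elliptic spectral curve $\Sigma$ of the 2-lobed Delaunay torus. The role of the inversion $\xi\mapsto\bar\xi^{-1}$ is now played by the antiholomorphic real involution $\tau^*$ of $\Sigma$ given by \eqref{real_involution}, which fixes one component of $\lambda^{-1}(S^1)$ pointwise and acts on the other as the hyperelliptic involution, and in particular satisfies $\tau^*(\lambda^{-1}(An_c))=\lambda^{-1}(An_c)$. For any holomorphic $g$ on a neighborhood of $\lambda^{-1}(S^1)$, the map $\xi\mapsto\overline{g(\tau^*(\xi))}$ is again holomorphic and agrees with $\overline{g(\xi)}$ on the fixed locus of $\tau^*$.

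First I would invoke Lemma \ref{real_analytic}: since $\hat\chi$ misses $\hat\Lambda$ along the compact set $\lambda^{-1}(S^1)$, the real-analytic map $\alpha^u_\rho$ extends to a bounded holomorphic function
\[F\colon V\subset \R\times\C^2\to\C,\qquad F(\rho,u,\bar u)=\alpha^u_\rho(u),\]
on an open neighborhood $V$ of $\{0\}\times\{(u,\bar u)\mid u\in\hat\chi(\lambda^{-1}(S^1))\}$. Choosing $c,\delta>0$ small enough so that
\[]-2\delta,2\delta[\,\times\,\hat\chi(\lambda^{-1}(An_{2c}))\,\times\,\overline{\hat\chi(\lambda^{-1}(An_{2c}))}\subset V,\]
and using that $y$ is bounded on the relatively compact set $\lambda^{-1}(An_c)\subset\Sigma$, I would pick a neighborhood $\mathcal U$ of $0\in\mathcal B_{1+c}$ small enough that for all $\rho\in\,]-\delta,\delta[$, $f\in\mathcal U$ and $\xi\in\lambda^{-1}(An_c)$ the triple $(\rho,(\hat\chi+yf)(\xi),\overline{(\hat\chi+yf)(\tau^*(\xi))})$ stays in $V$.

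Next I would set
\[\alpha^\rho(\hat\chi+yf)(\xi):=F\bigl(\rho,\,(\hat\chi+yf)(\xi),\,\overline{(\hat\chi+yf)(\tau^*(\xi))}\bigr)-\hat A(\xi),\]
which is holomorphic in $\xi$ by construction and bounded on $\lambda^{-1}(An_c)$ since both $F$ and $\hat A$ are bounded there. On $\lambda^{-1}(S^1)$, using the oddness of $\chi$ together with Lemma \ref{a^u_symmetries} and the identification $\hat A(\xi)=-\overline{\hat\chi(\bar\xi+\tau_{spec}/2)}$, one checks that this reduces modulo $\hat\Lambda$ to the defining identity
\[d+(\alpha^\rho(\chi)(\xi)+\hat A(\xi))\tfrac{\pi i}{2\tau}dw-\chi(\xi)\tfrac{\pi i}{2\tau}d\bar w=\alpha^{MS}_\rho([\chi(\xi)\tfrac{\pi i}{2\tau}d\bar w]);\]
single-valuedness of $\alpha^\rho(\chi)$ on $\Sigma$ follows from the lattice-shift behavior \eqref{lift_alpha} since $\chi$ has period $2$ along $1\in\pi_1(\Sigma)$. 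The remaining symmetries (oddness with respect to $\sigma$ and the reality $\alpha^\rho(f)(\bar\xi)=\overline{\alpha^\rho(f)(\xi)}$) are inherited from the corresponding symmetries of $\alpha^u_\rho$.

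The main obstacle, relative to Lemma \ref{A}, is geometric bookkeeping on $\Sigma$. Because $\chi$ carries a non-trivial period, one cannot work directly with a scalar-valued holomorphic lift; the splitting $\chi=\hat\chi+yf$ is what allows the deformation to be encoded as a genuine element of the Banach space $\mathcal B_{1+c}$. I would also need to check carefully that $\tau^*$ interacts with the two components $C^\pm$ of $\lambda^{-1}(S^1)$ and with $\sigma$ in such a way that $\overline{\chi(\tau^*(\xi))}$ is really the correct holomorphic substitute for $\overline{\chi(\xi)}$ on each component; everything else is essentially a verbatim translation of the argument of Lemma \ref{A}.
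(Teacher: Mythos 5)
Your proposal is correct and is essentially the paper's own route: the paper offers no separate argument for Lemma \ref{A2} beyond the phrase ``analogously to the proof of Lemma \ref{A}'', and your transplantation via the real involution \eqref{real_involution}, the splitting $\chi=\hat\chi+yf$ to absorb the period, and the bounded holomorphic extension $F(\rho,u,\bar u)$ of $\alpha^u_\rho$ is precisely that analogy. The one point you flagged resolves in your favor, though your description of the involution is slightly off: on $\lambda^{-1}(S^1)$ one computes $\overline{y^2}=\lambda^{-4}y^2$, hence $\bar y\bar\lambda^{-2}=\pm y$ with locally constant sign, and checking both sheets shows the involution fixes \emph{both} components $C^\pm$ pointwise (not one pointwise and the other via the hyperelliptic involution), so $\overline{g(\tau^*(\xi))}=\overline{g(\xi)}$ holds on all of $\lambda^{-1}(S^1)$ as your construction requires.
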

Note that we do not demand in Lemma \ref{A2} that the Sym point conditions are satisfied. Thus  for a 
given conformal type $\tau$ of  $T^2$ there is an open  set $W_{\tau} \subset i\R$ determining a $1-$parameter family of genus $1$ spectral curves $\Sigma^{\tau_{spec}}$ such that the corresponding function 
$\chi^{\tau_{spec}}=\chi\colon\Sigma^{\tau_{spec}}\to \mathrm{Jac}(T^2)$ (as defined in \eqref{Xbase_point})
 satisfies
the assumptions of Lemma \ref{A2}. \\

By Lemma \ref{non_degenerate1}, $\mathcal D\colon\Sigma\to \mathcal A^1(T^2)$ must have a pole at $[\tfrac{1}{2}]\in\Sigma$ with residue determined by $\rho>0.$ Thus the main difference to the case of stable homogenous tori is that we need to satisfy the condition \eqref{a_spin_expansion} to flow starting at  the $2$-lobed Delaunay tori. The sign of the residue of $\mathcal D$ at $[\tfrac{1}{2}]$ introduces a freedom in the choice of the deformation vector field for $\rho=0.$

\begin{Lem}\label{B2}
Consider a 2-lobed Delaunay torus $T^2$ with spectral data
 $(\Sigma^{\tau_{spec}}, \chi)$ with $\tau_{spec}\in i\R.$
Then there is an open neighborhood $U\subset i\R\times\R^2$ of $(\tau_{spec},0,0)$ and an $\epsilon>0$
such that for all $(\tau_{s},\rho,R)\in U$ there exists a unique
$f_{\tau_{s},\rho,R}\in \mathcal B_{1+\epsilon}$
such that the holomorphic function
\[\alpha^\rho(\chi+y f_{\tau_{s},\rho,R})\colon\lambda^{-1}\{\lambda\in\C\mid \tfrac{1}{1+\epsilon}<|\lambda|<1+\epsilon\}\subset
\Sigma^{\tau_{s} }\to\C \]
extends meromorphically to $\lambda^{-1}\{\lambda\in \C\mid |\lambda|\leq 1\}\subset \Sigma^{\tau_{s}}$ with first order poles only at 
$[0],[\tfrac{1}{2}]\in\Sigma^{\tau_{s}}=\C/(\Z+\tau_{s}\Z)$  and such that its residue
at $[\tfrac{1}{2}]$ is $R.$
Moreover, $f_{\tau_{s},\rho,R}\in \mathcal B_{1+\epsilon}$ depends smoothly on $(\tau_{s},\rho,R).$
\end{Lem}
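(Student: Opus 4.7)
The plan is to adapt the Banach space implicit function theorem argument of Lemma~\ref{B} to the genus-one spectral curve. Working on $\Sigma^{\tau_{spec}}$ with $\tau_{spec}$ close to $\tau_{spec_0}$, Lemma~\ref{A2} ensures that the holomorphic function $\alpha^\rho(\hat\chi^{\tau_{spec}}+yf)$ is well-defined on the annulus $A_c:=\lambda^{-1}(\{\tfrac{1}{1+c}<|\lambda|<1+c\})$ for $(\rho,f)$ sufficiently close to $(0,0)$. I introduce a smooth obstruction map
\[
\Phi\colon i\R\times\R^2\times\mathcal B_{1+c}\longrightarrow \bar{\mathcal B}_{1+\epsilon}
\]
whose zero set encodes precisely the required meromorphic extension. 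Concretely, $\Phi(\tau_{spec},\rho,R,f)$ is built by first subtracting from $\alpha^\rho(\hat\chi+yf)$ the prescribed principal parts (a simple pole at $[0]$ with residue forced by condition~(2) of Theorem~\ref{slitting_tori}, and a simple pole at $[\tfrac{1}{2}]$ of residue $R$), and then projecting the result onto the ``exterior'' part of the Laurent-type decomposition of odd functions on $A_c$. The latter is defined by choosing the connected component $A_c^+$ of $A_c$ adjacent to $C^+$ and using $\lambda$ as a coordinate there, in direct analogy with the projection $M$ of Lemma~\ref{B}. By Lemma~\ref{A2} together with Lemma~\ref{real_analytic}, the map $\Phi$ is smooth in all four arguments.

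At the base point $(\tau_{spec_0},0,0,0)$ the function $\alpha^0(\hat\chi^{\tau_{spec_0}})$ coincides with the $\alpha$-function from the $2$-lobed Delaunay torus spectral data of Section~\ref{Tori_spec_gen_1}; by Lemma~\ref{non_degenerate1} it has a single simple pole at $[0]$ on $\bar\Sigma^0$ and no pole at $[\tfrac{1}{2}]$, which matches $R=0$. Subtracting the prescribed principal part at $[0]$ therefore yields a function extending holomorphically across $C^+\cup C^-$ to all of $\bar\Sigma^0$, so $\Phi(\tau_{spec_0},0,0,0)=0$. For the $f$-derivative at the base point I use that for $\rho=0$ the Metha–Seshadri section reduces to $\alpha^u_0(\chi)=\overline{\chi}$, the Narasimhan–Seshadri section for flat line bundles; consequently the linearization of $f\mapsto \alpha^0(\hat\chi+yf)$ in $f$ is, up to the real involution \eqref{real_involution}, complex conjugation on the Jacobian. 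Composing with the exterior projection gives, after the standard isomorphism $\mathcal B_{1+c}\cong \bar{\mathcal B}_{1+c}$, $f\mapsto\overline{f(\bar\lambda^{-1})}$, the identity. Hence $\partial_f\Phi$ is a Banach space isomorphism at the base point, the Banach space implicit function theorem produces a unique smooth $f_{\tau_{spec},\rho,R}\in\mathcal B_{1+\epsilon}$ defined on a neighborhood of $(\tau_{spec_0},0,0)$, and smoothness of $\Phi$ gives the asserted smooth dependence on the parameters.

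The main obstacle is handling the subtracted principal parts on the genus-one spectral curve: unlike in Lemma~\ref{B}, where the single branch point over $\lambda=0$ is the only place where a pole must be prescribed, here the residue at $[\tfrac{1}{2}]$ is an independent real parameter that must be compatible both with the expansion \eqref{a_spin_expansion} of $\alpha^{MS}_\rho$ near the zero of $\chi$ at $[\tfrac{1}{2}]$ and with the Laurent decomposition of $\alpha^\rho(\hat\chi+yf)$ across $C^+$. What makes the argument go through is that this residue is precisely absorbed into the additional $R$-direction in the domain of $\Phi$, so the finite-dimensional correction does not enter the linearization in $f$; the isomorphism $\partial_f\Phi$ then rests on the reflection identity $\alpha^u_0(\bar\chi)=\overline{\alpha^u_0(\chi)}$ of Lemma~\ref{a^u_symmetries}, mirroring the situation of the homogeneous case.
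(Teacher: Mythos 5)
Your overall architecture (Banach spaces of bounded holomorphic functions, an exterior Laurent projection, and the implicit function theorem as in Lemma~\ref{B}) is the right one and matches the paper's strategy, but the key linearization is computed incorrectly, and this is precisely the point where the genus-one spectral curve differs from the homogeneous case. An odd deformation of $\chi$ is $y f(\lambda)$, and under the real involution \eqref{real_involution} the factor $y$ transforms as $y\mapsto \bar y\bar\lambda^{-2}$. Hence for $\rho=0$ the linearization of $f\mapsto\alpha^0(\hat\chi+yf)$, written again in the form $y\cdot g(\lambda)$, is $g(\lambda)=\lambda^{-2}f(\lambda^{-1})$ and \emph{not} $f(\lambda^{-1})$: the image consists of exterior functions whose Laurent expansion contains only powers $\lambda^{k}$ with $k\leq -2$, so it misses the $\lambda^{-1}$-direction. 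The differential $\partial_f\Phi$ at the base point is therefore injective but \emph{not} surjective, contrary to your claim that it is a Banach space isomorphism; the paper's proof explicitly turns on this failure of surjectivity.

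This gap interacts badly with your normalization at $[0]$: you subtract ``a simple pole at $[0]$ with residue forced by condition~(2) of Theorem~\ref{slitting_tori}'', but condition~(2) only demands the existence of a first order pole and does not prescribe its residue. If you fix that residue, the missing $\lambda^{-1}$-direction makes the equation $\Phi=0$ unsolvable by the implicit function theorem; the correct move (as in the paper) is to leave the residue at $[0]$ free, i.e.\ to quotient the target by the span of $\lambda^{-1}$ (equivalently, to accept that the residue at $(y,\lambda)=(0,0)$ is an output of the construction rather than an input), after which the restricted operator is again an isomorphism onto the reduced target and one can only guarantee that the extension has \emph{some} first order pole at $[0]$. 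Your treatment of the parameter $R$ as an independent finite-dimensional direction absorbing the residue at $[\tfrac12]$ is consistent with the paper, which realizes it concretely by adding a multiple of the odd meromorphic function with first order poles at $[\tfrac{1+\tau_{spec}}{2}]$ and $[\tfrac{\tau_{spec}}{2}]$; but without correcting the $\lambda^{-2}$ factor and the ensuing loss of surjectivity, the proof as written does not go through.
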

\begin{proof}
Analogously to  Lemma \ref{B} we obtain for small $\rho$ a map
\[A_\rho\colon \mathcal U\subset\mathcal B_{1+\epsilon}\to \bar{\mathcal B}_{1+\epsilon}\]
defined by 
\[A_\rho(f)=\tilde M(\alpha^\rho(f)),\]
where $\tilde M$ is the (continuous) linear map 
given by $\tilde M(y f\circ\lambda)=M(f)$ the principal part of $f$ as in \eqref{Mdef}.\\


For $\rho=0,$ the differential of the (linear) map
$A_0$
is not surjective anymore because  for $\hat f=yf\circ\lambda$
\[A_0(\hat f)(\lambda)=\lambda^{-2} f(\lambda^{-1})\]
by \eqref{real_involution}. Hence, we cannot control (by the implicit function theorem) the residue term at $(y,\lambda)=(0,0)\in\Sigma^{\tau_s}$
for the function $\alpha^\rho( \chi +y f_{\tau_{s},\rho,R}).$ Nevertheless, we still can guarantee that $
\alpha^\rho( \chi+yf_{\tau_{s},\rho,R})$ extends meromorphically and has only a first order pole at $(y,\lambda)=(0,0)\in\Sigma^{\tau_s}.$\\

The main difference is that we need to introduce a new parameter $R$ which gives the residue of the first order pole at
$[\tfrac{1}{2}]\in\Sigma.$ This new pole can easily be introduced for $\rho=0$ by adding to $\chi$ (an appropriate multiple of)
the unique odd meromorphic function $x\colon \Sigma^{\tau_{s}}\to\CP^1$ with first order poles at $[\tfrac{1+\tau_{s}}{2}]=\lambda^{-1}(\tfrac{1}{r})$ and $[\tfrac{\tau_{s}}{2}]=\lambda^{-1}(\infty)$ with residue $1$ and $-1,$ respectively.
\end{proof}

\begin{The}\label{flow_Delaunay}
Let $\tau\in i\R^{>0}$ be the conformal type of a 2-lobed Delaunay CMC torus with $\chi\colon\Sigma^{\tau_{spec}}\to \mathrm{Jac}(T^2)$ as described in Section \ref{Tori_spec_gen_1}.

 For small $\rho$, there exist an unique rectangular
 elliptic curve $\lambda_{\rho,+}\colon\Sigma^{\rho,+}\to\CP^1$ together with an odd holomorphic map
 \[\chi^{\rho,+}\colon\lambda_{\rho,+}^{-1}(\{\lambda\mid |\lambda|<1+\epsilon\})\subset\Sigma^{\rho,+}\to \mathrm{Jac}(T^2)\]
 and a lift $\mathcal D^{\rho,+}\colon\lambda_{\rho,+}^{-1}(\{\lambda\mid |\lambda|<1+\epsilon\})\to\mathcal A^1(T^2)$
 satisfying the conditions of Theorem \ref{slitting_tori} such that 
 the induced parabolic structure $\dbar^\lambda$ is stable for all $\lambda$ inside the unit disc (see Remark \ref{Sign}).

Moreover, for $\rho$ near 0, there exist an unique rectangular
 elliptic curve $\lambda_{\rho,-}\colon\Sigma^{\rho,-}\to\CP^1$ together with an odd holomorphic map
 \[\chi^{\rho,-}\colon\lambda_{\rho,-}^{-1}(\{\lambda\mid |\lambda|<1+\epsilon\})\subset\Sigma^{\rho,-}\to \mathrm{Jac}(T^2)\]
 and a lift $\mathcal D^{\rho,-}\colon\lambda_{\rho,-}^{-1}(\{\lambda\mid |\lambda|<1+\epsilon\})\to\mathcal A^1(T^2)$
 satisfying the conditions of Theorem \ref{slitting_tori} such that 
 there is exactly one $\lambda$ inside the unit disc where the induced holomorphic structure is unstable.

 Consequently, there exist two distinct flows starting at a 2-lobed Delaunay torus through closed CMC surfaces with boundaries in the sense of Theorem \ref{slitting_tori} and  Remark \ref{bound}. \end{The}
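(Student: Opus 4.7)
The strategy parallels Theorem \ref{flow_homog}, now using Lemma \ref{B2} in place of Lemma \ref{B} and invoking Lemmas \ref{non_degenerate1} and \ref{non_degenerate2} to solve the Sym point condition by a two-dimensional implicit function theorem. Apply Lemma \ref{B2} to produce, for $(\tau_{spec},\rho,R)$ in a neighborhood of $(\tau_{spec_0},0,0)$, a smooth family of candidate spectral data $(\Sigma^{\tau_{spec}},\chi^{\tau_{spec},\rho,R},\mathcal D^{\tau_{spec},\rho,R})$ with $\chi^{\tau_{spec},\rho,R}=\hat\chi^{\tau_{spec}}+yf_{\tau_{spec},\rho,R}$ and $\mathcal D$ the meromorphic lift to $\mathcal A^1(T^2)$ which is unitary along $\lambda^{-1}(S^1)$ and whose poles inside the closed unit disc are simple and located only at $[0]$ and $[\tfrac{1}{2}]\in\Sigma^{\tau_{spec}}$, with residue $R$ at $[\tfrac{1}{2}]$. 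This directly enforces conditions (1), (2), and (4) of Theorem \ref{slitting_tori}; Lemma \ref{non_degenerate1} combined with continuity rules out the appearance of additional zeros of $\chi$ in $\bar\Sigma^0$ for small $\rho$, so condition (3) reduces to matching the residue of $\mathcal D$ at $[\tfrac{1}{2}]$ with the form prescribed by \eqref{a_spin_expansion}.

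Expanding $\chi$ to first order at $[\tfrac{1}{2}]$, this residue prescription becomes the self-consistency equation $R=\pm\tfrac{4\pi i\rho/(\tau-\bar\tau)}{\chi'|_{[1/2]}}$, which is trivially satisfied at $\rho=0$ (both sides vanish) and has derivative $1$ in $R$ at the base point. The implicit function theorem therefore yields smooth functions $R_\pm(\rho,\tau_{spec})$ for each sign. By Remark \ref{Sign}, the $+$ sign keeps every parabolic structure inside the unit disc stable, whereas the $-$ sign produces exactly one unstable parabolic structure, namely at $[\tfrac{1}{2}]$; this dichotomy is the source of the two distinct flows $\Sigma^\pm$ asserted in the theorem.

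To impose the Sym point condition (5) we solve $\chi^{\tau_{spec},\rho,R_\pm(\rho,\tau_{spec})}(\xi_1)\equiv \tfrac{\pi i(1+\tau)}{4\tau}\pmod{\Lambda}$ for $\xi_1\in C^+$. This is a $Jac(T^2)$-valued, hence two-real-valued, equation in the two real unknowns $(\tau_{spec},\xi_1)$. The decisive nondegeneracy is furnished by Lemma \ref{non_degenerate2}, which asserts precisely that $\partial_{\tau_{spec}}\chi|_{\xi_1^0}$ and $\partial_\xi\chi|_{\xi_1^0}$ are $\R$-linearly independent at the initial Delaunay datum, so the implicit function theorem produces unique smooth $\tau_{spec}(\rho),\xi_1(\rho)$ for each sign choice. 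The second Sym point $\xi_2$ then arises from the reality involution $\xi\mapsto -\bar\xi$ of the spectral construction (which already swaps the two initial Sym points $\pm s_0+\tfrac{\tau_{spec}}{4}$) and remains distinct from $\xi_1$ for small $\rho$ by continuity. Theorems \ref{slitting_tori} and \ref{branched_CMC} then deliver the CMC surfaces with boundaries and their closing for rational $\rho$. The main obstacle, absent in the homogeneous case, is the loss of surjectivity of the linearization $A_0$ in Lemma \ref{B2} caused by the order-two pole at infinity of $\alpha^0(y\hat f\circ\lambda)=\lambda^{-2}f(\lambda^{-1})$; the extra parameter $R$ in Lemma \ref{B2} compensates exactly for this cokernel, and the sign ambiguity in the residue inherited from \eqref{a_spin_expansion} is precisely what forces the existence of two separate branches of the flow.
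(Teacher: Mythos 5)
Your proposal is correct and follows essentially the same route as the paper: Lemma \ref{B2} provides the intrinsic closing data with the free residue parameter $R$, the residue condition induced by \eqref{a_spin_expansion} at $[\tfrac{1}{2}]$ determines $R^\pm(\rho)$ for each sign (you solve the self-consistency equation directly by the implicit function theorem, where the paper integrates the equivalent ODE \eqref{ODE}), and Lemma \ref{non_degenerate2} together with a further application of the implicit function theorem fixes $\tau_{spec}^\pm(\rho)$ so that the Sym-point condition holds, the second Sym point coming from reality. One small correction to your closing remark: the parameter $R$ in Lemma \ref{B2} does not compensate the cokernel of $A_0$ arising from $A_0(yf)(\lambda)=\lambda^{-2}f(\lambda^{-1})$ --- that defect only prevents prescribing the residue over $\lambda=0$ and is harmless --- rather, $R$ is introduced separately (by adding a multiple of the odd degree-two meromorphic function with poles over $\tfrac{1}{r}$ and $\infty$) to create the pole at $[\tfrac{1}{2}]$ required by condition (3).
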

 \begin{figure}
\centering
\includegraphics[width=0.175\textwidth]{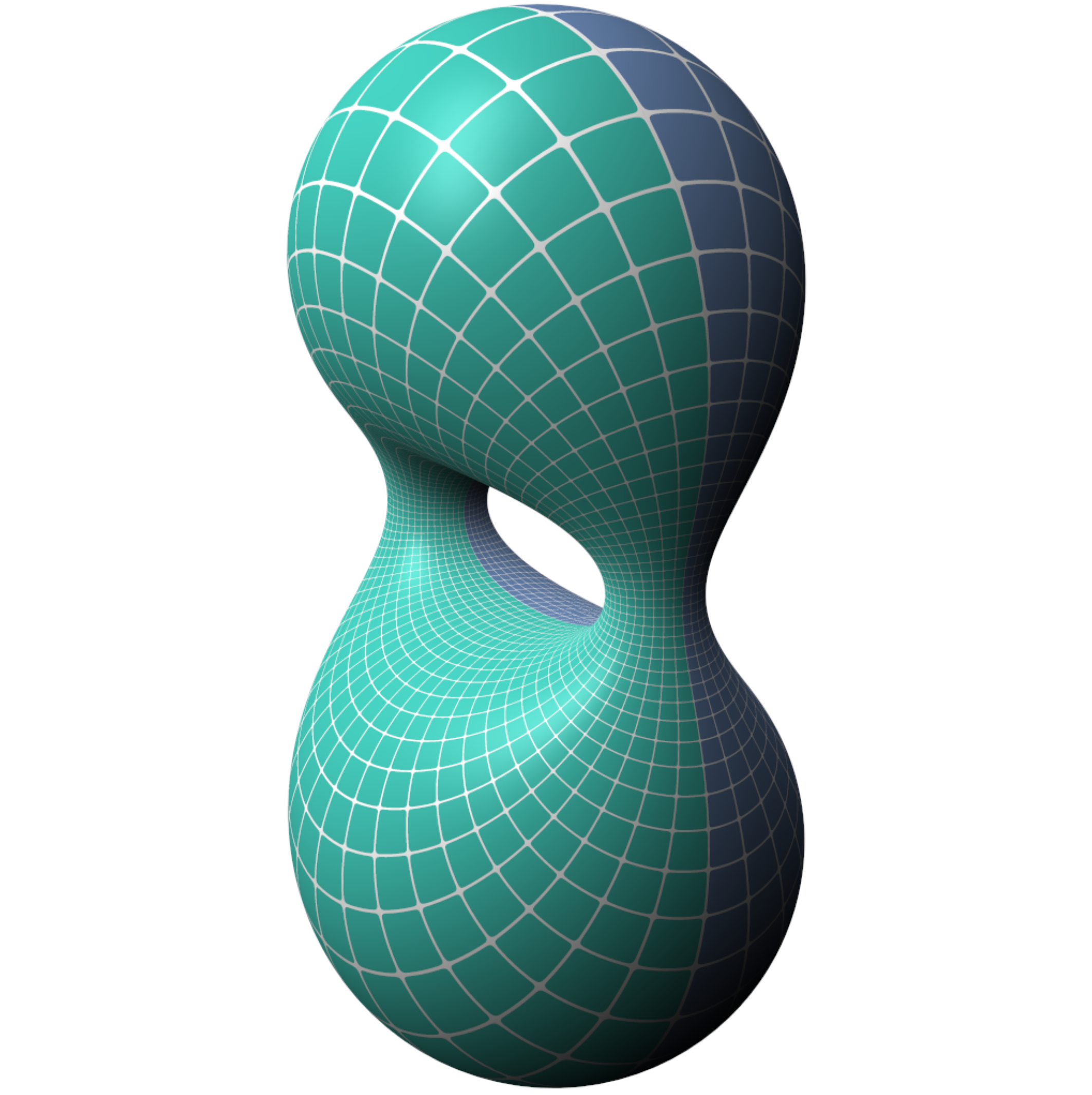}
\includegraphics[width=0.175\textwidth]{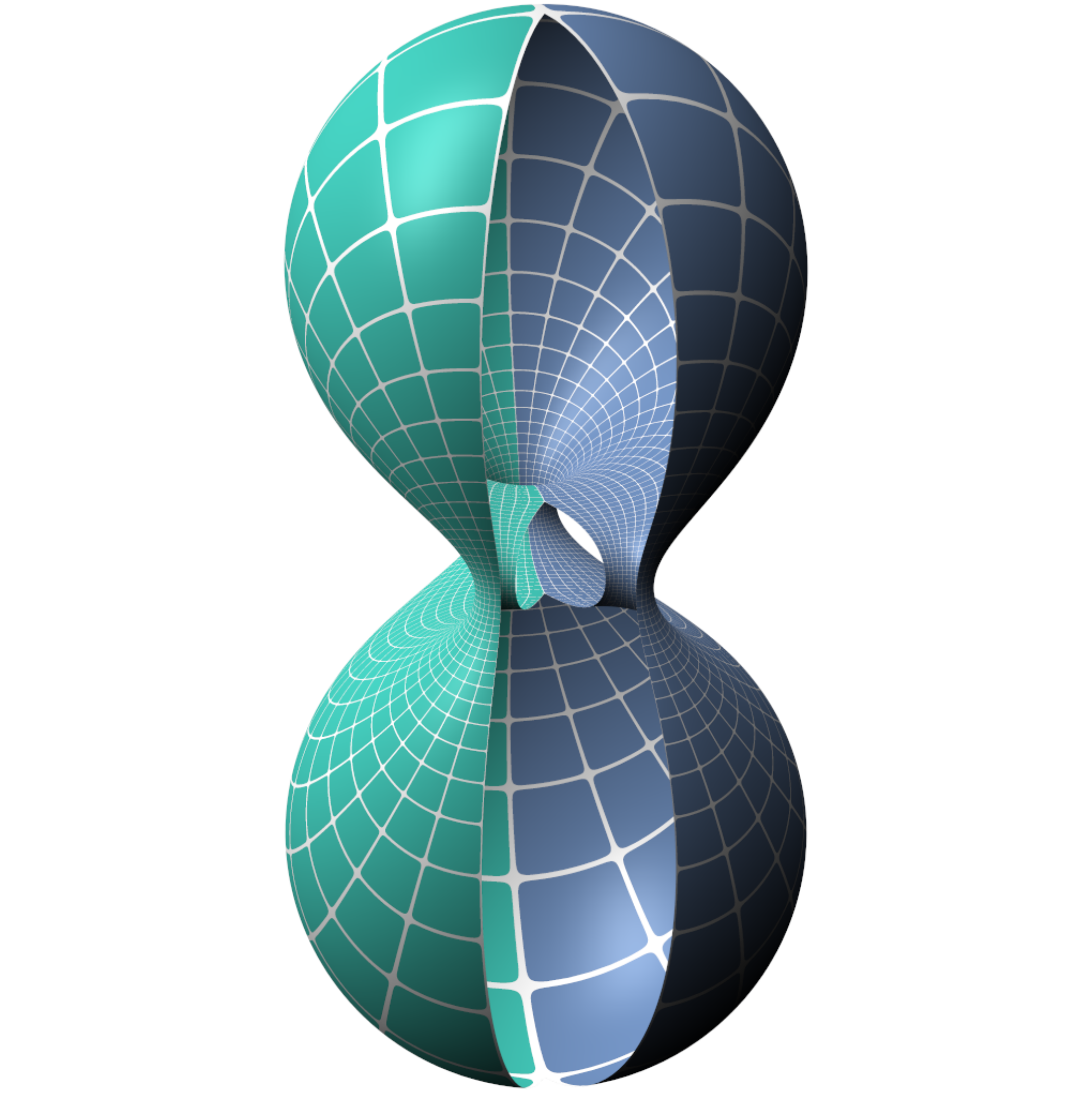}
\includegraphics[width=0.175\textwidth]{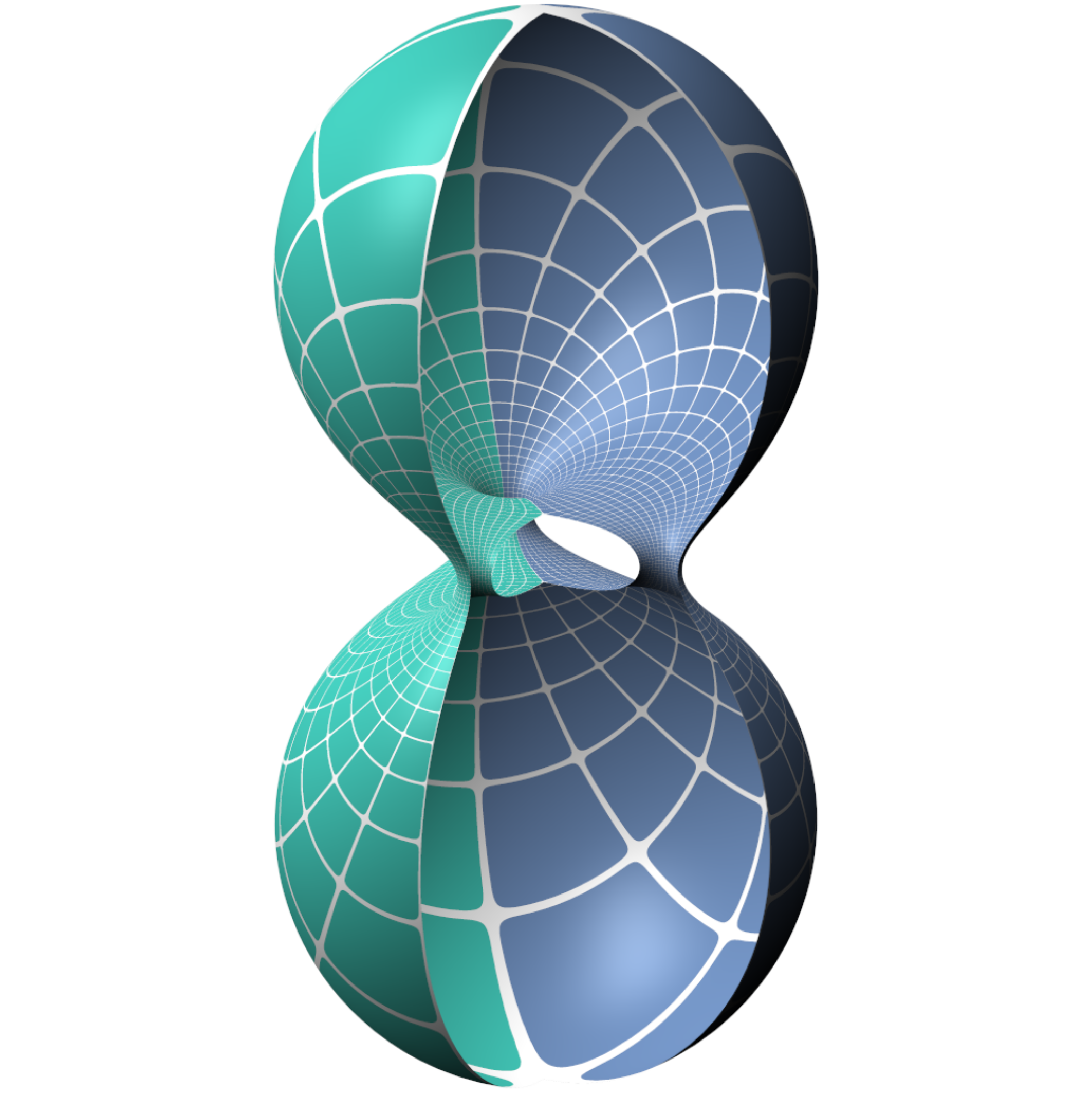}
\includegraphics[width=0.175\textwidth]{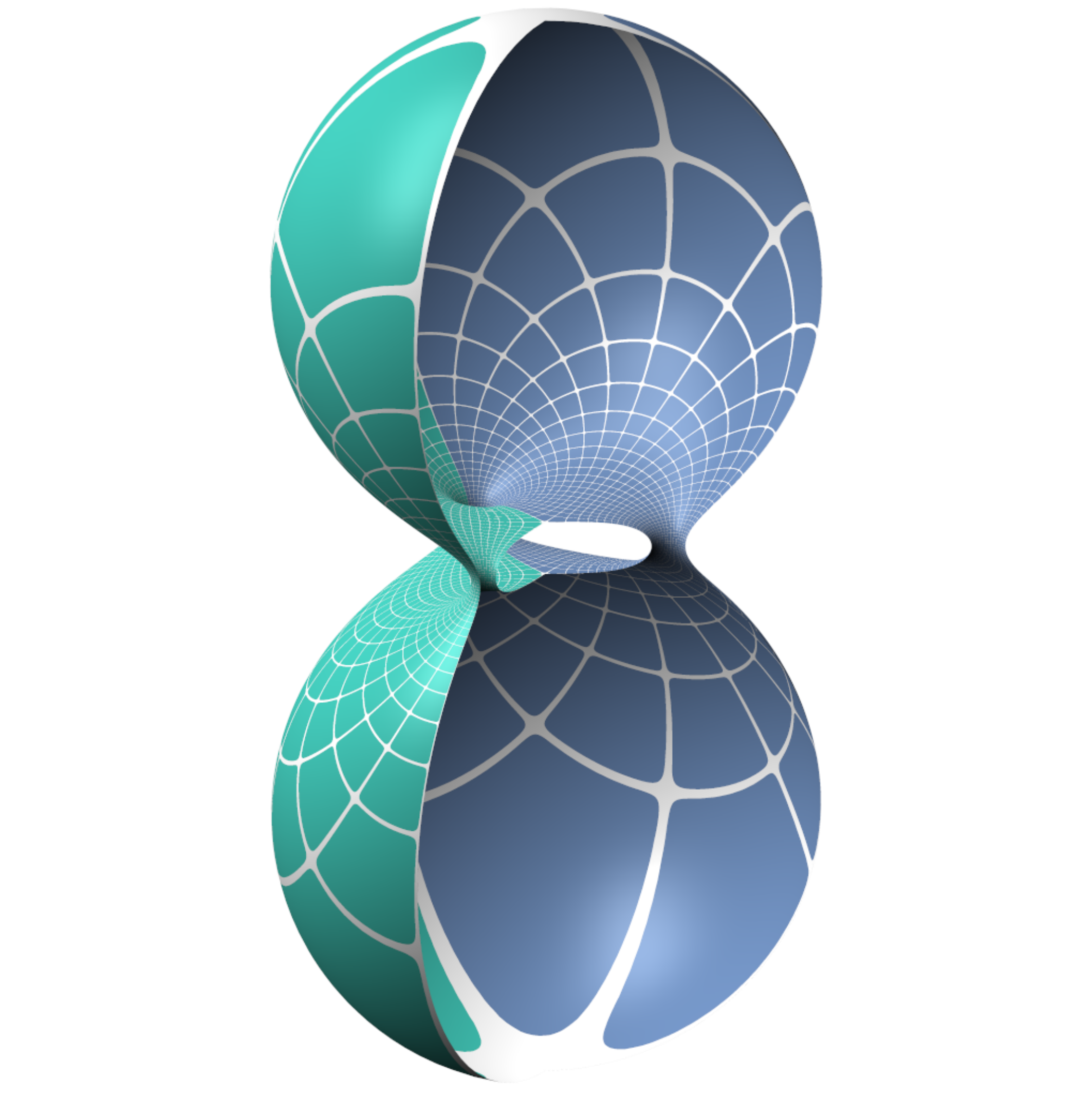}
\includegraphics[width=0.175\textwidth]{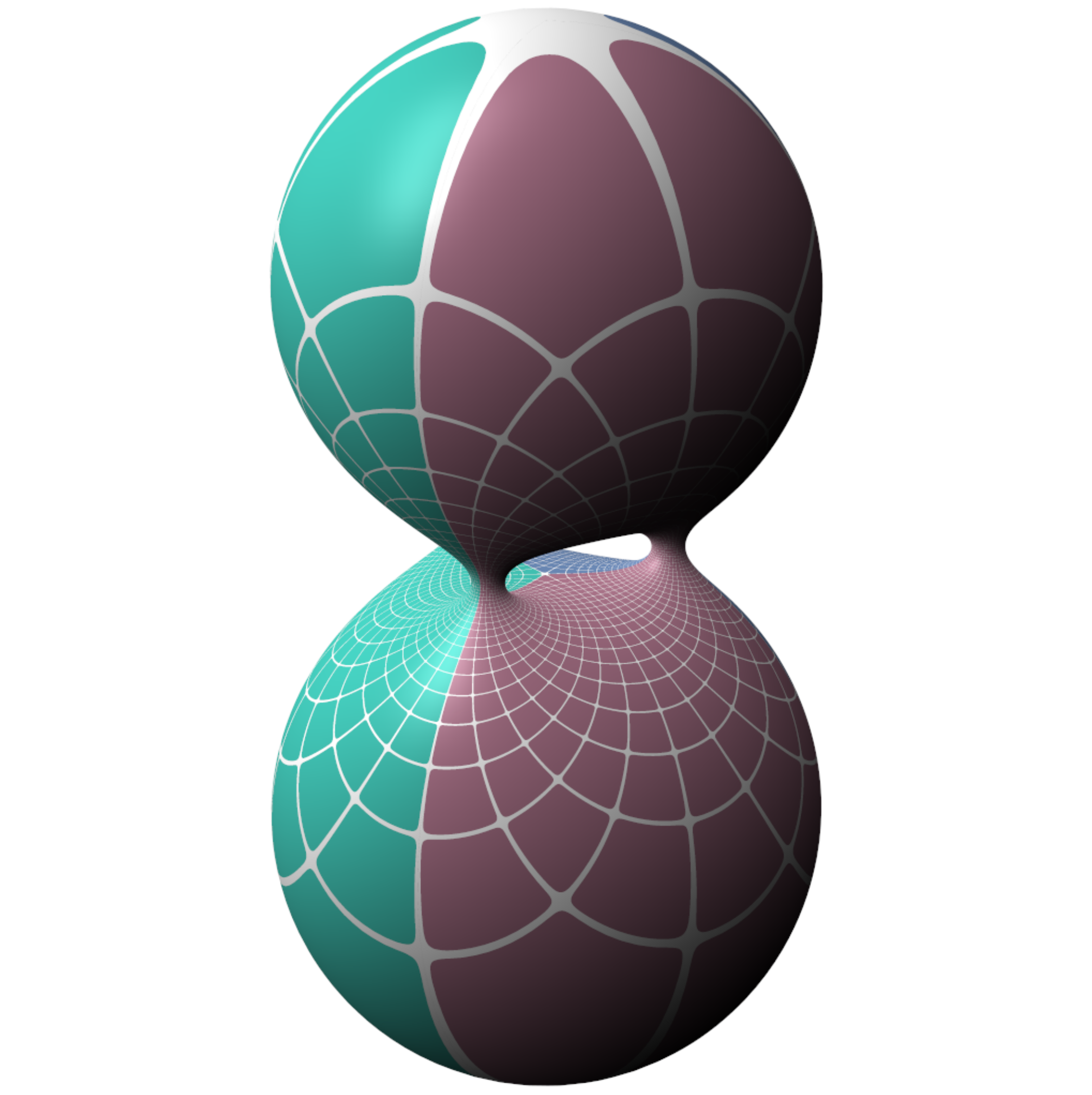}
\caption{
\footnotesize
The {\em stable} deformation of a 2-lobe Delaunay torus to a CMC surface of genus $2.$}
\label{fig:lawson2}
\end{figure}
\begin{proof}
The main difference from Theorem \ref{flow_homog} is that we
need to control the residue term at the branch point $[\tfrac{1}{2}]\in\Sigma^{\tau_s}.$
Consider for fixed ${\tau_s}\in i\R$ the following ODE on the 2-dimensional space
$U_{\tau_s}=\{(\rho,R)\mid ({\tau_s},\rho,R)\in U\}$ given by Lemma \ref{B2}:
\begin{equation}\label{ODE}
\dot{ R}^\pm(\rho)=\pm\frac{4x_1(\rho,R)-4\rho \dot x_1(\rho,R)}{(x_1(\rho,R))^2}
\end{equation}
where the dot denotes the derivative with respect to $\rho,$ and $x_1(\rho,R)$ is
the first order term of the expansion of $\chi+f_{\tau_s,\rho,R}$ at $[\tfrac{1}{2}]$ with respect 
to the holomorphic coordinate $\xi$ on $\Sigma^{\tau_s}=\C/(\Z+\tau_s\Z).$
 Note that the condition \eqref{a_spin_expansion} on the order 0 term $\bar\gamma$ is always
satisfied for $\alpha^\rho(\chi+y f_{\tau_{s},\rho,R})$ 
 as all relevant maps are odd. Therefore,
 each of the two ODEs \eqref{ODE} guarantees that condition (3) in Theorem \ref{slitting_tori}  is fulfilled by the  lift $\mathcal D$ (which is uniquely determined by the unitarity along the preimage of the unit circle) of the map \[\chi^\pm_{\tau_s,\rho}:=(\chi+ y f_{\tau_s,\rho,R^\pm(\rho)})\tfrac{\pi i}{2\tau} d\bar w\colon U\subset \Sigma\to \mathrm{Jac}(T^2).\]
Then for every small $\rho>0$ there is a real 1-dimensional space (para-metrized by $\tau_s\in i\R$) of appropriate spectral data
$(\Sigma^{\tau_s},\chi^\pm_{\tau_s,\rho})$ satisfying the intrinsic closing conditions. 
The extrinsic closing condition
is satisfied at $\rho=0$ for a unique $\tau_{s} = \tau_{spec},$ see Section \ref{Tori_spec_gen_1} and Lemma
\ref{non_degenerate2}. Thus by Lemma
\ref{non_degenerate2} we can apply (for each sign $\pm$) the implicit function theorem again to obtain a unique local parametrization
$\rho\mapsto\tau^{\pm}_s(\rho)$ such that the Sym point condition is satisfied by $\chi^\pm_{\tau^{\pm}_s(\rho),\rho}$ at a point
$\xi_1\in \lambda_{\rho,\pm}^{-1}(S^1)\subset \Sigma^{\tau_s^\pm(\rho)}.$ By reality of the spectral data,
there is a second point  $\xi_2\neq \pm\xi_1\in \lambda_{\rho,\pm}^{-1}(S^1)\subset \Sigma^{\tau_s^\pm(\rho)}$ at which the Sym point condition also holds.
 \end{proof}

For rational $\rho$ the analytic continuation of the corresponding surface closes to a compact (possibly branched) CMC surface. As a Corollary of this theorem and Theorem \ref{branched_CMC} we obtain:
\begin{Cor}
For every large genus $g>>2$ we obtain new 1-parameter families of 
 compact branched CMC surfaces. The families are parametrized by the conformal type $\tau \in (\sqrt{3},\infty)i $ of the corresponding CMC torus.
  If the flow reaches $\rho = \tfrac{g-1}{2g +2}$ we obtain closed CMC immersions of genus g.
 \end{Cor}

\begin{figure}
\centering
\includegraphics[width=0.175\textwidth]{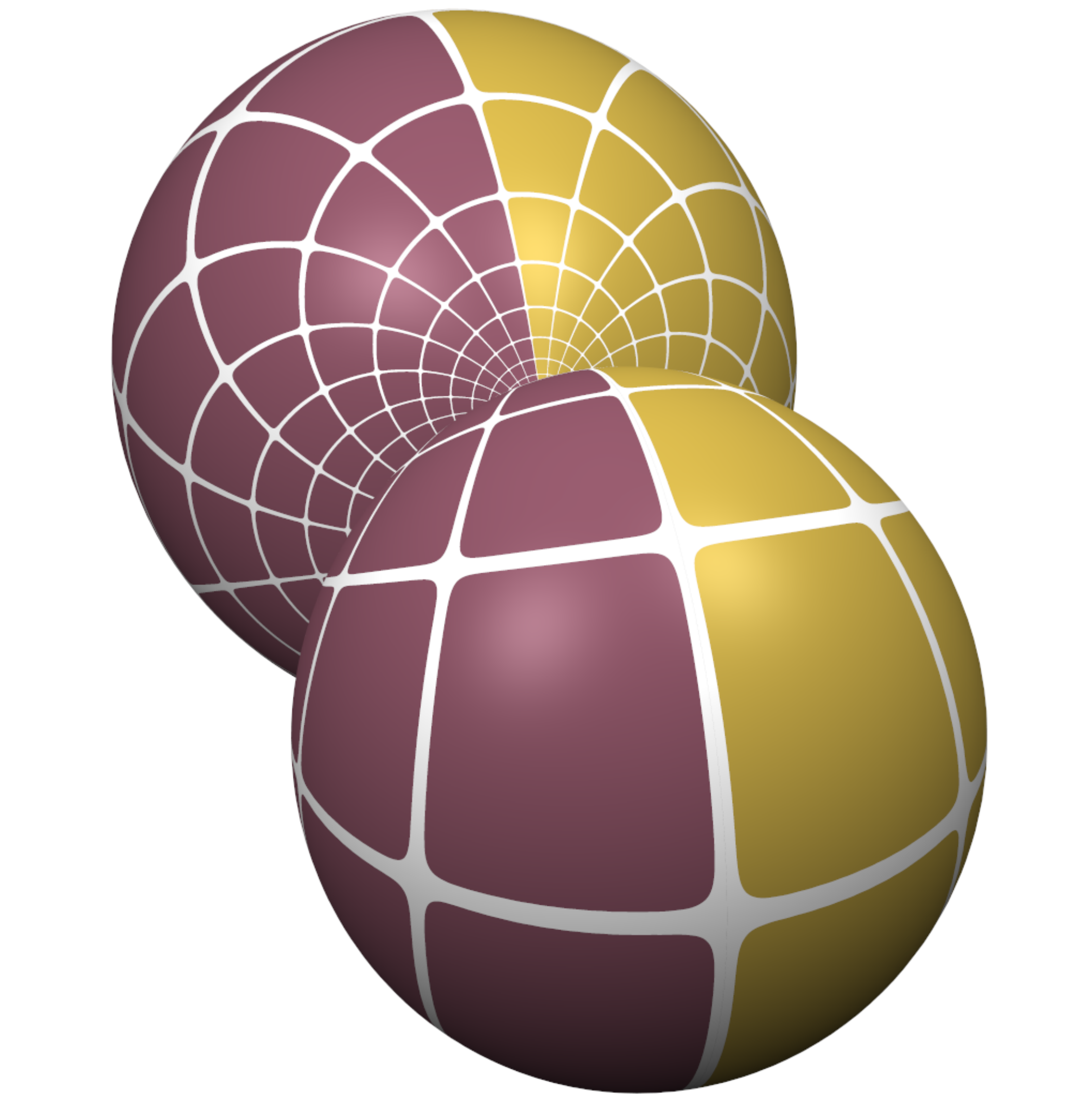}
\includegraphics[width=0.175\textwidth]{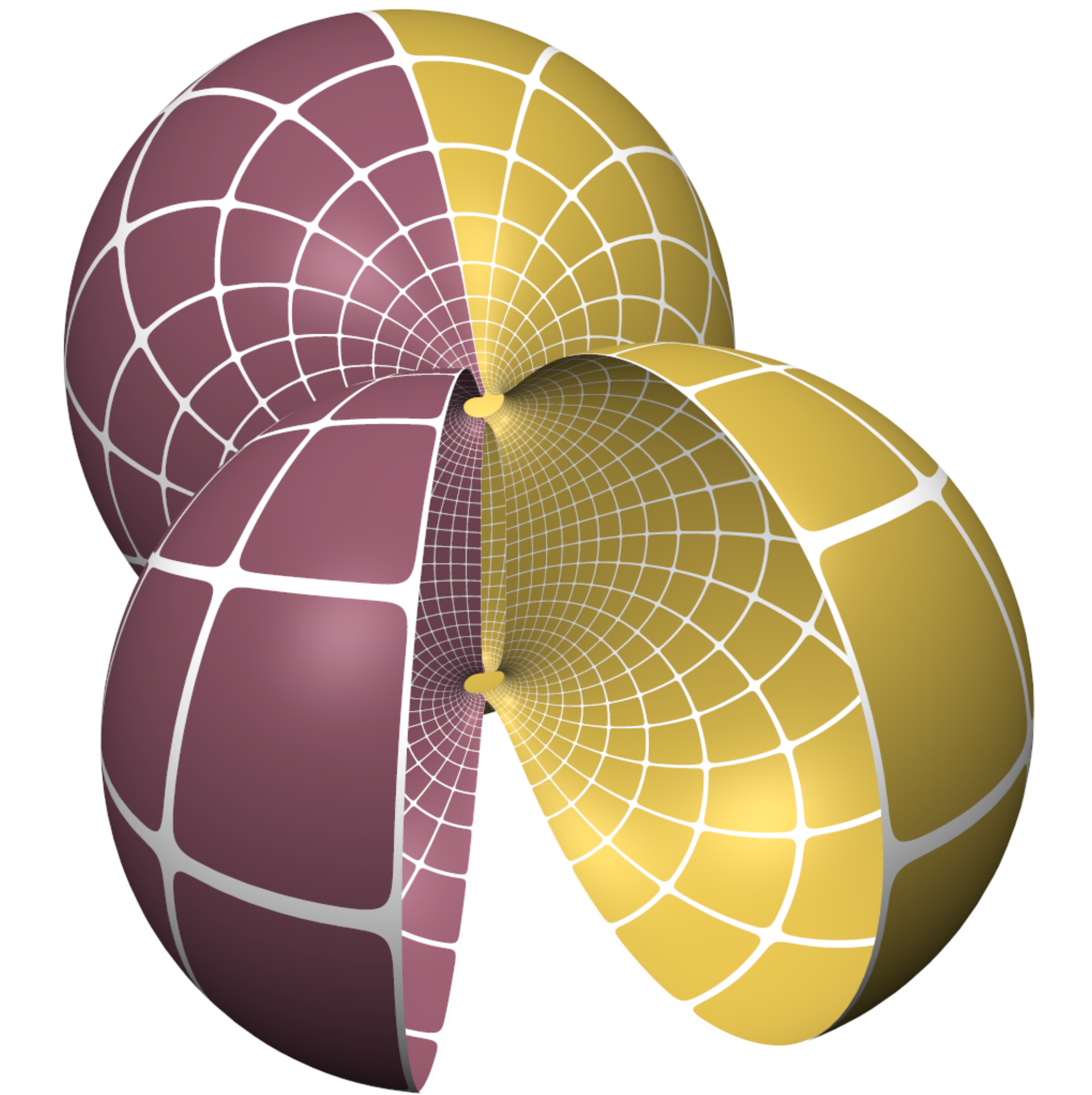}
\includegraphics[width=0.175\textwidth]{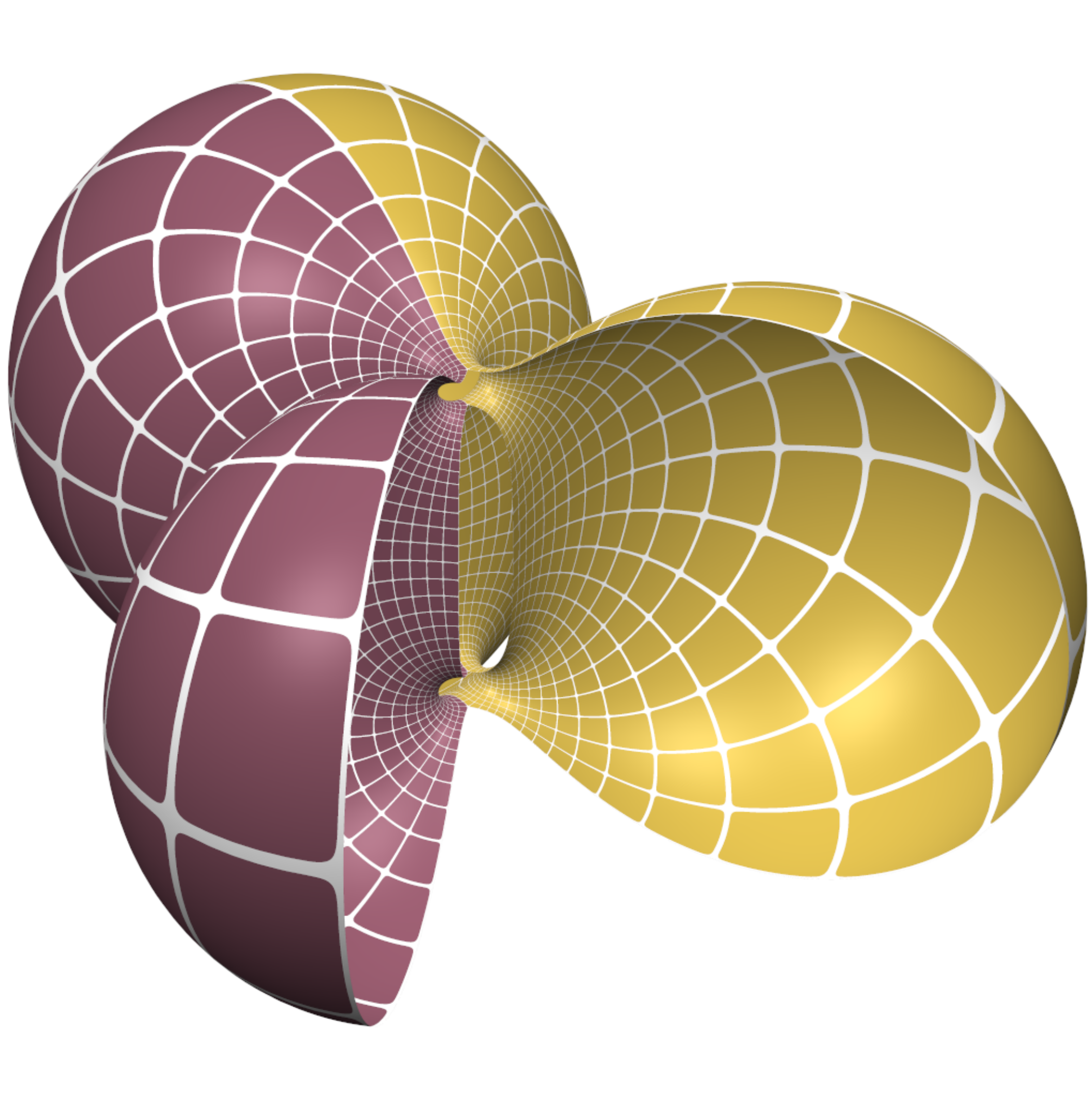}
\includegraphics[width=0.175\textwidth]{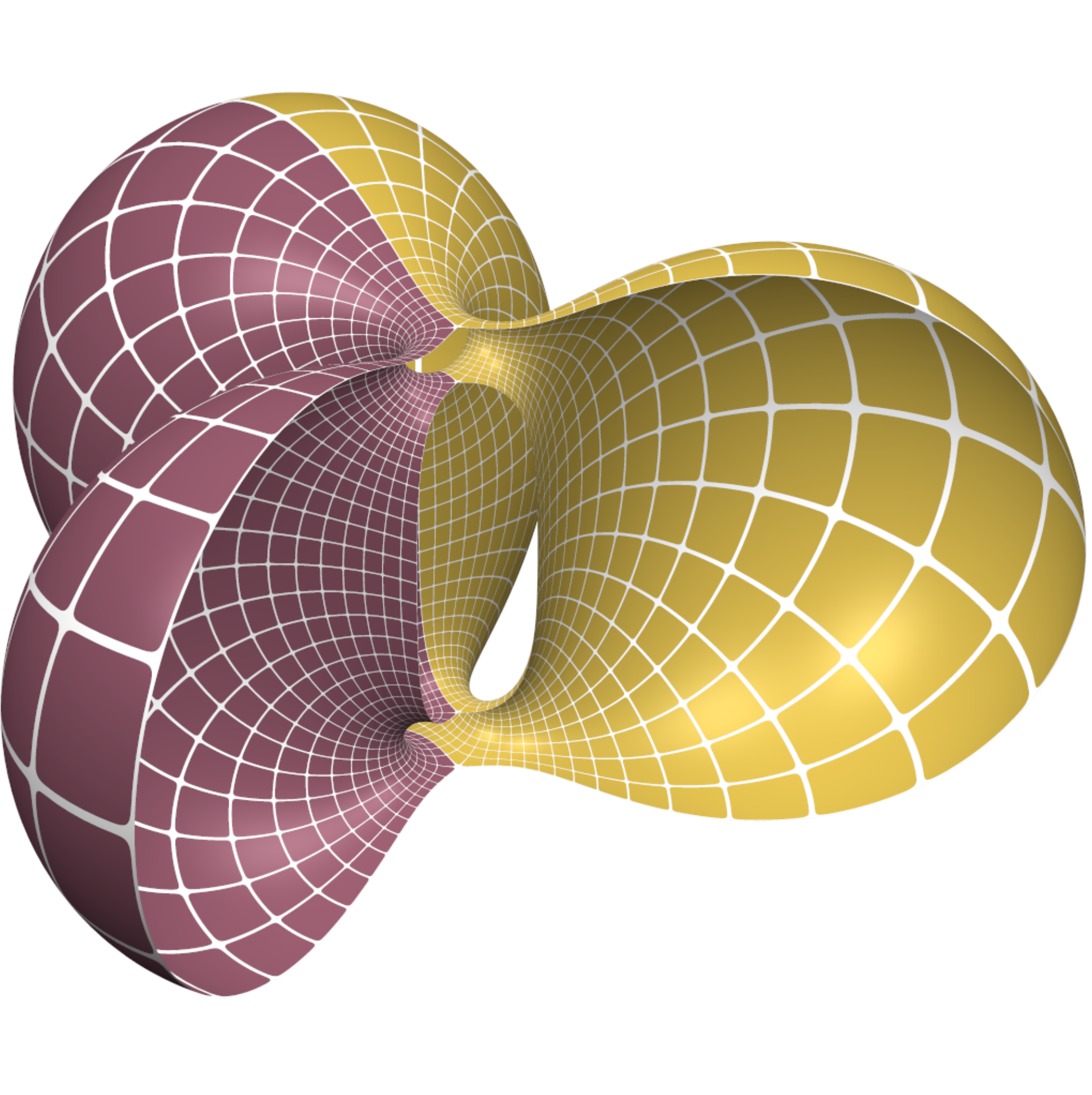}
\includegraphics[width=0.175\textwidth]{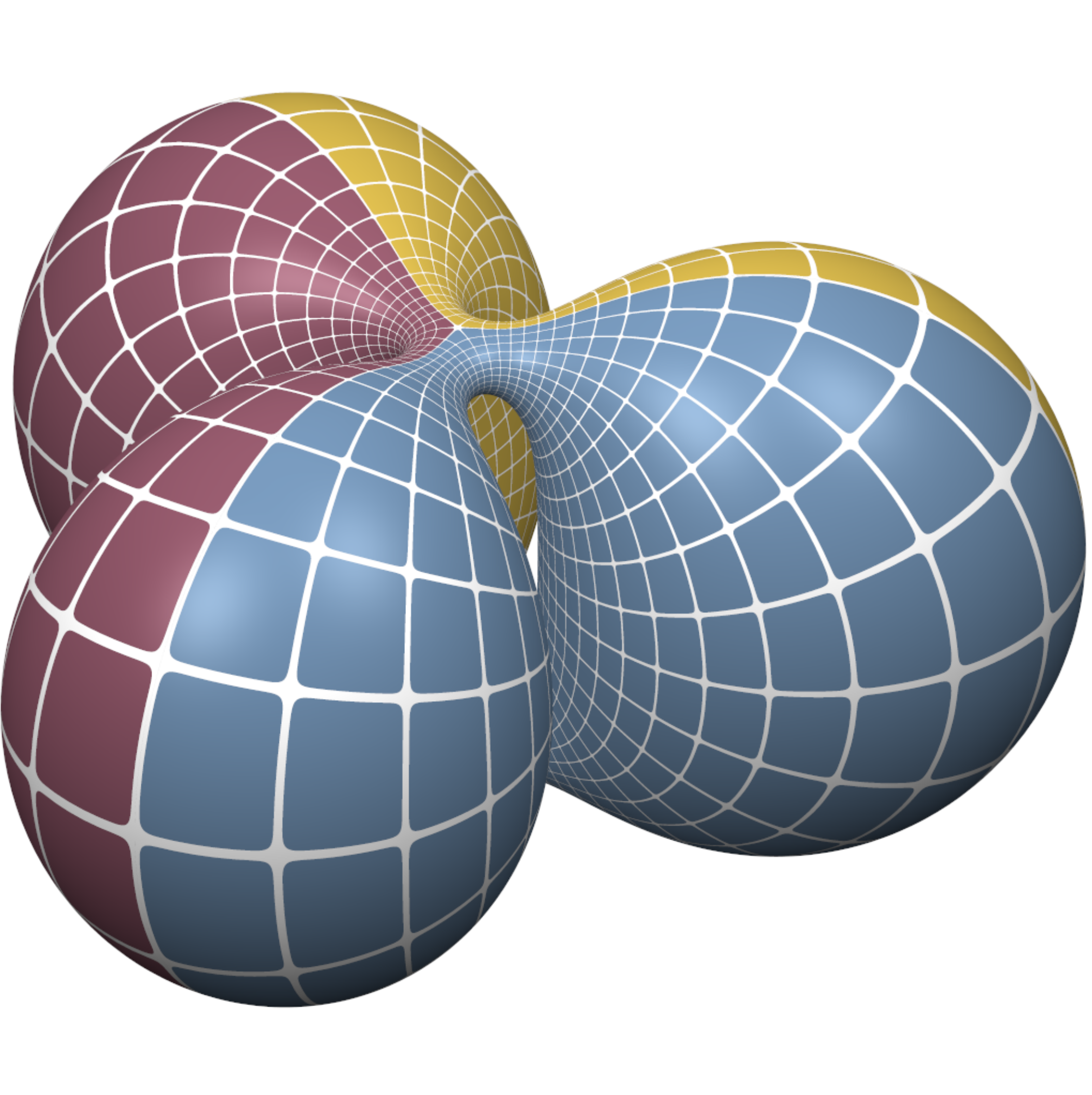}
\caption{
\footnotesize
The {\em unstable} deformation of a 2-lobe Delaunay torus to a CMC surface of genus $2.$}
\label{fig:lawson3}
\end{figure}
\subsection{Experiments and Conjectures}
We have numerically  implemented the flow of spectral data given by Theorem \ref{flow_homog} and Theorem \ref{flow_Delaunay}, and  experiments suggest  (for appropriate initial data) the long time existence
 of the flow (see \cite{HeHeSch3} for details).
 Consider the continuous family of CMC tori parametrized by $\tau$ consisting of the homogenous tori for $\tau \leq \sqrt{3}i$ and the $2$-lobed Delaunay tori for $\tau > \sqrt{3}i.$ This family converges to the branched double cover of a geodesic sphere as $\tau \to \infty$. The experiments indicate that this family of tori flows (if we choose the stable direction for $\tau > \sqrt{3}i$) to the Lawson symmetric CMC surfaces deforming Lawson's minimal surface and converging to a branched double cover of a geodesic sphere as the conformal type of the surfaces degenerates; see Figure \ref{fig:lawson2}.
 The flow of a $2$-lobed Delaunay torus, which has for positive $\rho$ an unstable holomorphic structure inside the unit disc, reaches CMC surfaces of genus $2$ for $\tau$ large enough and converges to a $3$-fold cover of a sphere as $\tau \to \infty$; see Figure \ref{fig:lawson3}. 
We have been able to flow to all
CMC surfaces of genus $2$ found  by a naive numerical search in \cite{HeS}. These experiments indicate that the moduli space of (symmetric) higher genus CMC surfaces inherits some structure of the moduli space of CMC tori. \\

We have used the parameter $\rho$ corresponding to the angle between curvature lines at the umbilics as 
the "flow parameter" in Theorem \ref{flow_homog} and Theorem \ref{flow_Delaunay}. 
The deformation of the conformal type of the torus $\tau\in i\R$ can be used as a second 
flow direction.
For  $\rho=0$ this recovers the Whitham flow of CMC tori. Using the 
methods of this paper it is possible to show the existence of a flow of CMC
surfaces by varying  the conformal type (for fixed 
$\rho$ near $0$).
It is clear that the $\rho$-flow and the conformal type flow commute.  It might be useful to
consider the extra flow direction in order to avoid possible singularities of the $\rho$-flow. We aim to study these phenomena and the long time existence of the $\rho$-flow in future work.\\

Further,
the experiments suggest that  the embeddedness of the cylinder, given by the image of $\{[w]\in T^2=\C/(2\Z+2\tau\Z)\mid
 \tau\leq\Im(w)\leq2\tau\}$ of the CMC immersion $f\colon T^2\to S^3$,
is preserved throughout the flow. By applying a combined version
of the $\rho$-flow with the conformal type flow one can preserve
minimality during the flow.
 Because the Clifford torus is the only embedded minimal tori in $S^3$ \cite{Br} (see also \cite{HaKiSch2} for
 a  Whitham theory approach), we conjecture that the Lawson surface $\xi_{2,1}$ is the only embedded minimal surface in $S^3$ of genus 2.



\date{\today}

\end{document}